\newtheorem{thm}{Theorem}
\newtheorem{lem}[thm]{Lemma}
\newtheorem{prop}[thm]{Proposition}
\newtheorem{cor}[thm]{Corollary}
\theoremstyle{remark}
\newtheorem{rmk}[thm]{Remark}
\newtheorem{example}[thm]{Example}
\theoremstyle{definition}
\newtheorem{defi}[thm]{Definition}
\numberwithin{thm}{section}
\numberwithin{equation}{section}
\newcommand{\Rmnum}[1]{\expandafter\@slowromancap\romannumeral #1@}
\def\R{{\mathbb R}}
\def\O{{\mathcal O}}
\newcommand{\vep}{\varepsilon}
\newcommand{\ol}{\overline}
\newcommand{\ul}{\underline}
\newcommand{\bpm}{\begin{pmatrix}}
\newcommand{\epm}{\end{pmatrix}}
\newcommand{\la}{\langle}
\newcommand{\ra}{\rangle}
\newcommand{\beq}{\begin{equation}}
\newcommand{\eeq}{\end{equation}}
\DeclareMathOperator*{\limsups}{limsup^\ast}
\DeclareMathOperator*{\liminfs}{liminf_\ast}
\theoremstyle{definition}
\newcommand\re{\textcolor{red}}
\title[Singular Neumann problems]{\protect{Singular Neumann boundary problems for a class of fully nonlinear parabolic equations in one dimension}}
\author{Takashi Kagaya}
\address{Takashi Kagaya, Institute of Mathematics for Industry, Kyushu University, Japan}
\email{kagaya@imi.kyushu-u.ac.jp}
\author{Qing Liu}
\address{Qing Liu, Department of Applied Mathematics, Faculty of Science, Fukuoka University, Japan}
\email{qingliu@fukuoka-u.ac.jp}
\begin{document}

\begin{abstract}
In this paper, we discuss singular Neumann boundary problem for a class of nonlinear parabolic equations in one space dimension. Our boundary problem describes motion of a planar curve sliding along the boundary with a zero contact angle, which can be viewed as a  limiting model for the capillary phenomenon. We study the uniqueness and existence of solutions by using the viscosity solution theory. We also show the convergence of the solution to a traveling wave as time proceeds to infinity when the initial value is assumed to be convex. 
\end{abstract}

\subjclass[2010]{35K61, 35D40, 35B40}
\keywords{fully nonlinear parabolic equations, power mean curvature flow, singular Neumann problem, viscosity solutions, large time behavior}

\maketitle

\section{Introduction}

\subsection{Background and motivation}
This paper is concerned with singular boundary problem for fully nonlinear differential equation. Let $b>0$ and $f, g \in C(\mathbb{R})$. We are mainly interested in the following equation in one space dimension:
\begin{numcases}{}
u_t-f(g(u_x) u_{xx})=0 & \text{in $(-b, b)\times (0, \infty)$,} \label{flow eq}\\
u_{x}(b, t)= \infty, \quad u_{x}(-b, t)=-\infty  & \text{for any $t>0$,}\label{bdry}\\
u(\cdot , 0)=u_0 & \text{in  $[-b, b]$,}  \label{initial}
\end{numcases}
where $u_0$ is a given continuous function in $[-b, b]$, and $f$ and $g$ are given continuous functions satisfying several assumptions to be elaborated later. 

One typical example of \eqref{flow eq} in our mind is the following equation:
\beq\label{ex flow eq}
u_t-(1+|u_x|^2)^{1-3\gamma \over 2} |u_{xx}|^{\gamma-1} u_{xx}=0  \quad \text{in $(-b, b)\times (0, \infty)$,} 
\eeq
where $\gamma>1$ is given. It is a special case of \eqref{flow eq} with 
\[
f(s)=|s|^{\gamma-1}s, \quad g(s)=(1+s^2)^{1-3\gamma \over {2\re{\gamma}}}
\]
for $s\in \R$. Such an equation has a geometric interpretation: it describes the motion of a graph-like planar curve whose normal velocity equals to the $\gamma$-th power of its curvature. The singular Neumann boundary condition \eqref{bdry}, on the other hand, depicts tangential sliding behavior of the curve along the boundary $x=\pm b$. It is clear that \eqref{ex flow eq} reduces to the standard curvature flow if $\gamma=1$. 

We are interested in this singular boundary value problem, since it is closely related to the so-called capillary phenomenon occurring at the interfaces of liquid, solid or air phases; see the introductory monograph \cite{FiBook} and references therein. The classical theory formulates the phenomenon as an elliptic or parabolic equation equipped with a Neumann or oblique condition determined by the contact angle $\theta$ on the boundary, which in our current setting can be expressed as
\[
u_x(\pm b,t) = \pm \tan \left(\frac{\pi}{2} - \theta\right). 
\]
However, while there is a vast literature on the case of positive contact angles, less is known for the problems with $\theta=0$, especially in the time-dependent setting. 
One can view the zero-angle case as a limiting situation when the interface between the liquid and solid has a very strong adhesion effect. 
This physical background motivates us to study the singular Neumann problem for parabolic equations. 

It is of value to look into \eqref{flow eq}--\eqref{initial} also from the mathematical point of view. Huisken \cite{Hui2} studied smooth solutions of the Neumann problem with $\theta=\pi/2$ for mean curvature flow. Later, Altschuler and Wu \cite{AW1, AW2} investigated a more general boundary condition for $0<\theta\leq \pi/2$ for quasilinear parabolic equations and proved the convergence of smooth solutions to a traveling wave as time proceeds. We refer to \cite{BeNo11, ChGuKo03, ChGu11} etc.\ for further developments on graph-like curvature flow with contact angle conditions and \cite{KKR, St96-2, St96-1, BaDa3, MiTo, Ed20, Kag1} etc.\ for various approaches to mean curvature flow of general surfaces with right contact angle. %including Brakke's flow which is a measure theoretic weak solution.

It is worth mentioning that there is another approach, called level-set formulation, to the same geometric evolution problem using the viscosity solution theory; we refer to \cite{GSa, Sa, Ba2, ISa, Gbook} for well-posedness results. The contact angle boundary condition under this formulation reads 
\[
\la\nabla u, \nu\ra=k |\nabla u|,
\]
where $k=\cos\theta$ and $\nu$ denotes the outward unit normal to the boundary. In the references mentioned above, it is also assumed that $0\leq k<1$, i.e., $0<\theta\leq \pi/2$. We are thus curious whether there still exists a unique solution and how it evolves if we take $k=1$. 

The primary novelty of this work is to discuss the contact angle boundary problem in the case $\theta=0$, which is not addressed in the aforementioned papers.  It turns out that this sliding boundary condition requires special structure on the parabolic operator. In fact, for the usual curve shortening flow, i.e., \eqref{ex flow eq} with $\gamma=1$, one cannot expect existence of solutions bounded in space. The well-known ``grim reaper solution'' suggests that in general solutions may blow up at the boundary if we turn the contact angle into zero. In fact, we can really prove the nonexistence of spatially bounded solutions in this case even if the initial condition is bounded; see Theorem \ref{thm:blowup} below. The main reason can be attributed as insufficiency of curvature effects to sustain the adhesion of curve to the boundary. 

It is thus natural to consider geometric flows with stronger curvature effects, which leads us to \eqref{ex flow eq} with $\gamma>1$. The power curvature flow is of independent interest and has important applications in image processing; consult \cite{AGLM, Ca} for instance. Under the singular boundary condition, we focus on the one dimensional case and investigate the well-posedness and large time behavior for nonlinear parabolic equations including \eqref{ex flow eq}. In particular, we attempt to extend,  in our singular and fully nonlinear setting, the known results \cite{Hui2, AW1, AW2} on large time convergence to traveling wave solutions.

We remark that, in the setting of power curvature flow \eqref{ex flow eq} with zero contact angle, it is actually possible to use the level set method to find a solution $U$ by treating the boundary portions above the contact points as a part of the curve motion; namely, one can consider the evolution of $U$ whose zero level set $\{(x, y)\in \R^2: U(x, y, t)=0\}$ for any $t\geq 0$ is equal to 
\[
\left\{(x, u(x, t)): |x|< b\right\}\cup \{(b, y): y\geq u(b, t)\}\cup \{(-b, y): y\geq u(-b, t)\}.
\]
See related results in \cite{Go, ISo}. 
In this work, in order to establish an approach applicable to more general nonlinear diffusions like \eqref{flow eq}, we choose to study the graph case instead of the level set equation.

\subsection{Main results}
We impose the following assumptions on the functions $f$ and $g$.
\begin{itemize}
\item[(A1)] $f \in C(\mathbb{R})$ is strictly increasing with $f(0) = 0$ and $f(s)\to \pm \infty$ as $s \to \pm \infty$. 
\item[(A2)] $g \in C(\mathbb{R})$ is positive and 
\beq\label{exponent alpha}
 |s|^{\alpha}g(s) \to C_\pm \quad \text{as } s \to \pm \infty 
 \eeq
for some exponent $\alpha > 2$ and constants $C_\pm > 0$. 
\end{itemize}

We establish well-posedness results for \eqref{flow eq}--\eqref{initial} under the assumptions (A1) and (A2) as well as the continuity of initial value $u_0$ in $[-b, b]$. 
As is easily seen, there are two major difficulties caused by the structure of the equation: high nonlinearity of the operator due to the appearance of $f$ and strong degeneracy of the elliptic operator because of the rapid decay of $g(s)$ as $s\to \infty$. One therefore cannot expect solutions to be smooth in these circumstances.

 The framework of viscosity solutions (cf. \cite{CIL})  then becomes a natural choice to study \eqref{flow eq}. Singular Neumann problems for nonlinear elliptic equations are studied in  \cite{LaL2} with the boundary condition interpreted in the viscosity sense: there are no $C^2$ functions touching $u$ from below at the boundary.  For our parabolic problem, we apply the same viscosity interpretation to \eqref{bdry}. A precise definition of viscosity solutions is given in Section \ref{sec:def-vis}. 
 
In the context of Cauchy-Dirichlet problems, the derivative blow-up on the boundary is related to the loss of boundary condition. We refer to \cite{FiLi, PoSo1, PoSo2, PoSo3} for detailed analysis on the boundary behavior of such kind of solutions for viscous Hamilton-Jacobi equations. 
Also, \cite{CeNo, MiZh} study generalized Dirichlet problem for mean curvature flow with a driving force and show large time convergence to traveling waves with blow-up at boundary. The problem setting in this work is different from these papers: our equation is of nonlinear curvature type and, throughout the evolution, we impose singular Neumann condition directly without prescribing Dirichlet data at all. 
 
%Due to the strong nonlinearity, %we are not able to directly apply the standard viscosity arguments to obtain general uniqueness and existence of viscosity solutions. 
%we can only show that there exists a unique solution that is Lipschitz in time. The assumption (A3) enables us to deduce the time Lipschitz continuity. %It would be interesting to get general results without assuming the regularity of $u_0$ in (A3).
Our first main result is as follows. 
\begin{thm}[Well-posedness for $\alpha>2$]\label{thm existence perron}
Let $b>0$. 
Assume that functions $f, g$ satisfy (A1), (A2) respectively. 
Let $u_0 \in C([-b,b])$.
Then there exists a unique viscosity solution $u\in C([-b, b]\times [0, \infty))$ of \eqref{flow eq}--\eqref{initial}. 
\end{thm}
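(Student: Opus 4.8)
The plan is to prove Theorem \ref{thm existence perron} by the standard Perron method adapted to the singular Neumann condition, so the work splits into three parts: a comparison principle, construction of suitable barriers, and the Perron argument itself. For the comparison principle, suppose $u$ is a subsolution and $v$ a supersolution of \eqref{flow eq}--\eqref{initial} with $u(\cdot,0)\le v(\cdot,0)$. One argues by contradiction at an interior maximum of $u-v$ (after the usual doubling of variables in $x$ and $t$, and a small perturbation like $-\sigma/(T-t)$ to make the time derivative strict). The boundary plays essentially no role in the comparison: the singular condition \eqref{bdry}, interpreted in the sense of \cite{LaL2} (no $C^2$ function touches from below at $x=\pm b$), means that a supersolution can never be tested from below at the boundary, so the penalized maximum of $u(x,t)-v(y,s)-|x-y|^2/2\vep-|t-s|^2/2\vep$ cannot be attained with $y=\pm b$; similarly the subsolution cannot be tested from above there. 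Hence the maximum is interior in $x$, and the standard theory of \cite{CIL} for $(x,t)$ away from the boundary applies: the parabolic structure of $f(g(r)X)$ is degenerate elliptic and continuous, so one gets the usual inequality $f(g(\xi_1)X_1)-f(g(\xi_2)X_2)\ge$ (something that vanishes), contradicting strict monotonicity from the penalization. This yields $u\le v$, hence uniqueness.

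The crucial technical input is the construction of barriers that force a Perron solution to actually satisfy the singular Neumann condition and to match the initial data continuously up to the boundary — this is where assumption (A2) with $\alpha>2$ is essential. I would look for a stationary (or nearly stationary) supersolution $\psi$ on a one-sided neighborhood of $x=b$ with $\psi(b)$ finite, $\psi$ continuous, $\psi_x(b^-)=+\infty$, and $\psi$ lying above any competitor; symmetrically at $x=-b$. Concretely, near $x=b$ one seeks $\psi$ of the form $\psi(x)=\psi(b)-c\,(b-x)^{\beta}$ with $0<\beta<1$ chosen so that $g(\psi_x)\psi_{xx}$ stays bounded (or has a favorable sign) as $x\to b^-$: since $\psi_x\sim c\beta (b-x)^{\beta-1}\to+\infty$, \eqref{exponent alpha} gives $g(\psi_x)\sim C_+ |\psi_x|^{-\alpha}\sim (b-x)^{-\alpha(\beta-1)}$ while $\psi_{xx}\sim (b-x)^{\beta-2}$, so $g(\psi_x)\psi_{xx}\sim (b-x)^{(1-\alpha)(\beta-1)-1}$; choosing $\beta$ close to $1$ (possible precisely because $\alpha>2$) makes the exponent positive, so this quantity $\to 0$ and one can arrange $\psi_t - f(g(\psi_x)\psi_{xx}) \ge 0$, i.e.\ $\psi$ is a supersolution. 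Adding a large multiple of $(b-x)$ or a constant lets one slide $\psi$ above $u_0$ near the boundary and above a global bound coming from flat (constant-in-$x$) sub/supersolutions $M\pm Ct$, which always exist since $f(0)=0$. The mirror-image construction gives a subsolution $\underline\psi$ below. These barriers confine the Perron solution in a shrinking-in-time cusp at $x=\pm b$, which simultaneously (i) forces the singular Neumann condition \eqref{bdry} to hold in the viscosity sense, (ii) gives continuity of $u$ up to $\{x=\pm b\}$, and (iii) delivers the initial condition $u(\cdot,0)=u_0$ at the boundary points.

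With comparison and barriers in hand, the Perron step is routine: set $u=\sup\{w : w \text{ is a subsolution with } w\le v_0\}$ where $v_0$ is the smallest available supersolution from the barrier family, and similarly bound below. The standard lemmas (sup of subsolutions is a subsolution after upper-semicontinuous envelope; the usual bump-construction shows the upper envelope $u^*$ is a subsolution and the lower envelope $u_*$ is a supersolution) apply verbatim in the interior, and at the boundary the singular condition is vacuous for the supersolution test and automatically inherited for the subsolution test, so no new work is needed there. Then $u_*\ge u^*$ by comparison while $u^*\ge u\ge u_*$ by construction, giving continuity and that $u$ is the solution; the interior initial condition follows from interior barriers built from \eqref{flow eq}'s own scaling together with the modulus of continuity of $u_0$. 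The main obstacle is unquestionably Step two: getting the exponents in the cusp barrier to work out — and verifying that $\beta\in(0,1)$ with $\beta > 1 - 1/(\alpha-1)$ both exists and makes $\psi$ a genuine (viscosity) supersolution up to and including the boundary — is exactly the place where the hypothesis $\alpha>2$ enters and where the delicate estimates live; everything else is a faithful adaptation of \cite{CIL} and \cite{LaL2}.
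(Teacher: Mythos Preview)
Your comparison argument has a real gap at the boundary. You assert that ``the subsolution cannot be tested from above'' at $x=\pm b$, but in Definition~\ref{def singular} subsolutions carry \emph{no} boundary condition whatsoever, so nothing prevents the doubled-variable maximum from having $x_\vep=\pm b$. The paper handles this (its Case~1) by the Soner-type rescaling $u_\delta(\lambda x,t)-\lambda v(x,t)$ with $\lambda<1$, which forces the $u$-variable into $[-\lambda b,\lambda b]\subset(-b,b)$. After this rescaling the Crandall--Ishii matrix inequality only yields $\lambda z_1\le z_2$, so one must compare $\lambda f^{-1}(\tau_1)$ with $f^{-1}(\tau_2)$; making $\lambda$ close to $1$ gives a contradiction only if $\tau_1$ is a~priori bounded. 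That bound is manufactured by first replacing $u$ by its sup-convolution in time, which is Lipschitz in $t$; this is precisely the ``first and crucial step'' the paper flags in the introduction and carries out in Step~1 of the proof of Theorem~\ref{thm cp}. Neither the rescaling nor the sup-convolution appears in your sketch, and without them the boundary case is open.

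Your barrier/initial-data scheme also does not close. The constant-in-$x$ functions $M\pm Ct$ are \emph{not} supersolutions of \eqref{flow eq}--\eqref{bdry}: they are tested from below at $x=\pm b$ by any smooth function, violating condition~(ii) of Definition~\ref{def singular}. Your cusp barrier $\psi(b)-c(b-x)^\beta$ does have the right boundary behaviour (the correct range is $0<\beta\le(\alpha-2)/(\alpha-1)$, not ``$\beta$ close to $1$'' or ``$\beta>1-1/(\alpha-1)$''; the paper takes exactly $\beta=(\alpha-2)/(\alpha-1)$), but by itself it is far from $u_0$, and sliding by constants does not recover the initial condition pointwise. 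The paper's route is to first prove existence for initial data satisfying (A3) via Perron with barriers $u_0+L_\pm t$ (Proposition~\ref{prop existence truncated}), then approximate an arbitrary $u_0\in C([-b,b])$ by $\psi_\vep=\vep\hat\psi+\text{(mollified }u_0\text{)}$ satisfying (A3) (Lemma~\ref{lem regularization}), and finally pass to the limit by stability (Theorem~\ref{thm stability}) together with the contraction estimate~\eqref{initial dependence}. This two-stage construction is what actually delivers $u(\cdot,0)=u_0$ up to the boundary, and it is absent from your outline.
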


We use comparison arguments to prove the uniqueness and adopt Perron's method to obtain the existence. We remark that the well-posedness of generalized Dirichlet boundary value problems for nonlinear parabolic equations is established in \cite{BaDa1, BaDa2, AtBa}. 
Our approach share similarities with theirs, especially in the proof of comparison principle. However, the methods proposed in \cite{BaDa1, BaDa2, AtBa} basically apply to quasilinear equations and require the operator to have a linear growth in the second derivative $u_{xx}$. 
In the work, we are able to handle parabolic equations with more general nonlinear structures. 

%We here need the condition (A3) as well as structure assumptions (A1)(A2) in order to handle more general nonlinear equations. 

Heuristically speaking, by (A1) one can get the inverse function $f^{-1}$ of $f$ and then transform \eqref{flow eq} to 
\beq\label{transformed eq}
f^{-1}(u_t)=g(u_{x}) u_{xx}.
\eeq
This transformation allows us to apply standard comparison arguments to estimate the difference between the right hand sides of viscosity inequalities. 
However, in this process one needs to provide a Lipschitz bound on the time derivative $u_t$ to avoid the degeneracy appearing in the difference of the left hand sides; note that $f^{-1}(s_1)-f^{-1}(s_2)\to 0$ fails to hold in general when $|s_1 - s_2| \to 0$ without assuming the boundedness of $s_1, s_2$.

%does not hold in general when $s_1, s_2\to \infty$ and $|s_1 - s_2| \to 0$, thus we need the boundedness of $s_1, s_2$ to control the difference between $f^{-1}(s_1)$ and $f^{-1}(s_2)$. 
A first and crucial step in our proof of the comparison principle is to adopt the sup-convolution for the subsolution in time. This enables us to obtain its Lipschitz continuity in time, which facilitates treating the left hand side of \eqref{transformed eq}. 
Also, our argument to handle the singular boundary condition resembles the technique in \cite{So} used to study the state constraint boundary problem. 
See Section \ref{sec:comparison} for a detailed proof of the comparison principle.

%Meanwhile, we can use (A3) to deduce that
%\beq\label{formal time lip}
%L_-\leq u_t\leq L_+
%\eeq
%in the viscosity sense. Such time Lipschitz continuity and enables us to complete the proof. 

We obtain the existence result by adapting Perron's method to our singular boundary problem. Our proof consists of two steps. We first study the existence problem under the following assumption on $u_0$:
\begin{itemize}
\item[(A3)] $u_0 \in C^{1,1}((-b,b)) \cap C([-b,b])$ and 
\begin{align}
& L_- \leq f(g((u_0)_x)(u_0)_{xx}) \leq L_+ \quad \text{a.e. in} \; \;  (-b, b), \label{initial cond1} \\
& (u_0)_x(x)\to \pm \infty \quad \text{as} \ x\to \pm b \label{initial cond2}
\end{align}
for some $L_\pm \in \mathbb{R}$.
\end{itemize}
 This additional regularity assumption allows us to construct with ease a sub- and a supersolution satisfying the initial condition. Then the existence of solutions in between follows from Perron's method. In the general case when $u_0$ is only assumed to be continuous in $[-b, b]$, we can approximate the initial value by a sequence of smooth functions that satisfy (A3) and then apply the standard stability argument to show the local uniform convergence of the approximate solutions. 
The regularization of $u_0$ will be introduced in detail in Lemma \ref{lem regularization}.  

We remark that the $C^{1, 1}$ regularity assumed in (A3) implies local semiconvexity and semiconcavity of $u_0$ in $(-b, b)$ while \eqref{initial cond1} additionally imposes a precise requirement on the semiconvexity and semiconcavity constants. In fact, by (A1) and (A2), \eqref{initial cond1} can be rewritten as 
\[
{f^{-1}(L_-)\over g((u_0)_x)}\leq (u_0)_{xx}\leq {f^{-1}(L_+)\over g((u_0)_x)},
\]
which specifies pointwise bounds of semiconvexity and semiconcavity constants of $u_0$ in terms of $g((u_0)_x)$. In particular, if (A3) holds with $L_-\geq 0$, then $u_0$ is convex in $(-b, b)$. In addition, due to \eqref{exponent alpha} and \eqref{initial cond2}, (A3) with $L_->0$ implies that $(u_0)_{xx}(x)\to \infty$ as $x\to \pm b$.

%$f(g((u_0)_x)(u_0)_{xx}) \ge 0$ holds if and only if $u_0$ is convex for $u_0 \in C^{1,1}((-b,b)) \cap C([-b,b])$ due to the conditions of $f$ and $g$ in (A1) and (A2). 
%In general, the assumption \eqref{initial cond1} is weaker than assuming the semiconvexity and the seminconcavity of $u_0$ for $u_0 \in C^{1,1}((-b,b)) \cap C([-b,b])$ satisfying \eqref{initial cond2} because of the condition of $g$ in (A2) and the singularity as in \eqref{initial cond2}. 
%The assumption \eqref{initial cond1} admits the divergence $(u_0)_{xx}(x) \to \infty$ or $(u_0)_{xx}(x) \to -\infty$ as $x \to \pm b$ depending on the sign of $L_\pm$ although it gives a restriction on the divergence rate of $(u_0)_{xx}$ depending on the divergence rate of $(u_0)_x$ and $\alpha$ in (A2).} 

It is worth pointing out that the range $\alpha>2$ in (A2) plays a key role for the existence of spatially bounded solutions. This condition is consistent with our observation that only with an exponent $\gamma>1$ does \eqref{ex flow eq} admit solutions that are continuous in $[-b, b]\times [0, \infty)$ and satisfy \eqref{bdry}. Indeed, if we assume in place of (A2) that
\begin{itemize} 
\item[(A2')] $g \in C(\mathbb{R})$ is positive and 
\[ |s|^{\alpha}g(s) \to C_\pm \quad \text{as } s \to \pm \infty \]
for some exponent $\alpha \le 2$ and positive constants $C_\pm > 0$,
\end{itemize}
then we have the following nonexistence result. 

\begin{thm}[Nonexistence for $\alpha\leq 2$]\label{thm:blowup}
Let $b>0$. Assume that functions $f$ and $g$ satisfy (A1) and (A2'). Let $u_0\in C([-b, b])$. Then there exist no solutions of \eqref{flow eq}--\eqref{initial} in $C([-b, b]\times [0, \infty))$. 
\end{thm}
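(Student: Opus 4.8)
The plan is to argue by contradiction. Assume $u\in C([-b,b]\times[0,\infty))$ solves \eqref{flow eq}--\eqref{initial}; fix $T>0$, so that $|u|\le M_T$ on $[-b,b]\times[0,T]$ by compactness. I will produce a contradiction near the endpoint $x=b$, the endpoint $x=-b$ being symmetric.

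The key object is the traveling-wave profile. Inserting a candidate translating solution $ct+\phi(x)$ into \eqref{transformed eq}, the profile must solve $\phi''=f^{-1}(c)/g(\phi')$; with $p=\phi'$ this reads $p'=f^{-1}(c)/g(p)$, and since $f^{-1}(c)>0$ by (A1) and $g(p)\sim C_\pm|p|^{-\alpha}$ by (A2'), we get $p'\sim \mathrm{const}\cdot|p|^{\alpha}$ for large $|p|$. For $1<\alpha\le 2$ this forces $p$ to reach $+\infty$ at a finite abscissa $x_*$ with $p(x)\sim \mathrm{const}\cdot(x_*-x)^{-1/(\alpha-1)}$, and then, because $1/(\alpha-1)\ge 1$ exactly when $\alpha\le 2$, one has $\phi(x_*)-\phi(x)=\int_x^{x_*}p=+\infty$; the borderline $\alpha=2$ is the logarithmic case. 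This is precisely where (A2') bites: under (A2) with $\alpha>2$ the same profile stays bounded, producing the genuine solution of Theorem~\ref{thm existence perron}, whereas here the profile blows up at its asymptote. Choosing $c$ so that the two asymptotes of $\phi$ lie at $x=\pm(b+\eta)$ — possible for $1<\alpha\le2$ since then $\int_{\R}g<\infty$, with the corresponding speeds $c_\eta$ staying in a compact subset of $(0,\infty)$ as $\eta\to0$ — I obtain a family $W_\eta(x,t)=c_\eta t+\phi_\eta(x)$ of classical solutions of \eqref{flow eq} on the slightly wider strip $(-(b+\eta),b+\eta)\times(0,\infty)$, whose restrictions to $[-b,b]$ are viscosity subsolutions of \eqref{flow eq}--\eqref{bdry}.

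I then compare $u$ with a suitably shifted $W_\eta$ on the corner cylinder $(b-\delta,b)\times(0,T)$, using the comparison argument behind Theorem~\ref{thm existence perron} (sup-convolution of $u$ in time followed by doubling of variables), with one twist: at $\{b\}\times(0,T]$ a positive maximum of $W_\eta-u$ is impossible, since it would provide a $C^2$ function ($W_\eta$ minus a constant) touching $u$ from below at the boundary, contradicting the viscosity boundary condition; thus the comparison need only be controlled on $\{b-\delta\}\times[0,T]$ and on $[b-\delta,b]\times\{0\}$, where $u$ is bounded. Letting $\eta\to0$ collapses the asymptote of $\phi_\eta$ onto $x=b$, and since $\alpha\le2$ makes $\phi_\eta(b)\to+\infty$, the comparison forces $u$ to be unbounded near $x=b$ at positive times — contradicting $u\in C([-b,b]\times[0,\infty))$.

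The main obstacle is the bookkeeping of the vertical shift that places $W_\eta$ under $u$ at $t=0$: a crude global shift cancels exactly the divergence of $\phi_\eta(b)$, so the comparison must be localized in space near $x=b$ and run on a short time window, genuinely exploiting the freedom at $x=b$. An equivalent and perhaps cleaner route is the a priori estimate that any continuous supersolution meeting the singular Neumann condition obeys, in the viscosity sense, $u_{xx}\le f^{-1}(L)/g(u_x)\le C\,u_x^{\alpha}$ near $x=b$ — where $u_t\le L$ is a one-sided bound available from comparison arguments — which yields $u_x(\cdot,t)\ge c\,(b-x)^{-1/(\alpha-1)}$ and hence $u(b,t)-u(x,t)=\int_x^b u_x(\cdot,t)=+\infty$, which is absurd. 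This second formulation also covers the remaining regime $\alpha\le1$, where $\int_{\R}g=\infty$ and no traveling-wave profile attains infinite slope over a bounded strip: there the estimate $u_{xx}\le C\,u_x^{\alpha}$ prevents $u_x$ from ever reaching $+\infty$ at $x=b$, directly contradicting \eqref{bdry}.
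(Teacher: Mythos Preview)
Your proposal has a genuine gap, and it is precisely the one you flag but do not close. In the traveling-wave approach, any placement of $W_\eta$ below $u$ on a set that includes the corner $\{b\}\times\{t_1\}$ (whether $t_1=0$ or $t_1>0$) forces a vertical shift of order $\phi_\eta(b)$, because $u$ is bounded there while $\phi_\eta(b)\to\infty$. After that shift the value of $W_\eta$ at $(b,t)$ is $c_\eta(t-t_1)+O(1)$, which is bounded on any finite time window, and no contradiction results. Localizing in space near $x=b$ does not help: the parabolic boundary of $(b-\delta,b)\times(t_1,T)$ still contains the segment $[b-\delta,b]\times\{t_1\}$ where the same shift is required. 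The ``freedom at $x=b$'' provided by the singular Neumann condition is only available for $t>t_1$, not on the initial slice, so it cannot be ``genuinely exploited'' to avoid this. Your second route, the a~priori estimate $u_{xx}\le f^{-1}(L)/g(u_x)$, rests on a bound $u_t\le L$ that you assert is ``available from comparison arguments'' but never produce; for $u_0$ merely in $C([-b,b])$ no such bound is obvious, and the barriers you would naturally compare with (traveling waves) are exactly the objects that fail to exist under (A2'). Converting that viscosity inequality into the pointwise ODE bound $u_x(x,t)\ge c\,(b-x)^{-1/(\alpha-1)}$ also needs regularity of $u(\cdot,t)$ that you have not established.

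The paper's proof avoids the cancellation problem by abandoning traveling waves altogether. It builds time-dependent subsolutions $\underline{u}(\,\cdot\,;k)$ that are \emph{uniformly bounded at $t=0$} --- the double-logarithmic boundary layers have zero width initially --- yet satisfy $\underline{u}(b,t_0;k)\to\infty$ as $k\to\infty$ for every fixed $t_0>0$. The mechanism is an interface $x(k,t)$ moving inward from $x=b$, outside of which the profile is $\log\bigl(\log(\,\cdot\,)/\log y_k\bigr)$; this choice is what makes the construction work at the critical exponent $\alpha=2$, where (as the paper notes) a power $(b-x)^\beta$ fails property~(a) and a single logarithm $-\log(b-x)$ fails property~(b). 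With these subsolutions in hand, a single application of Theorem~\ref{thm cp} (valid for any nonnegative $g$, hence under (A2')) gives $u(b,t_0)\ge t_0\,f(-M\log y_k)-r_k+\min u_0\to\infty$, contradicting continuity. No bound on $u_t$ and no case split in $\alpha$ are needed.
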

%we can see the blow-up on the boundary points for any solution of \eqref{flow eq} and \eqref{bdry} as follows. we assume (A1)

Our proof is based on explicit construction of a sequence of subsolutions $\ul{u}(\cdot; k)$, given in \eqref{def:lower-blowup}, which are uniformly bounded in $[-b, b]$ at $t=0$ but diverges to infinity on the boundary instantaneously after the initial moment as $k\to \infty$. 
%It is required that $\frac{d \ul{u}}{dt}(\pm b,t; k) \to \infty$ as $k \to \infty$ at any positive time so that $u(\cdot; k)$ blows up momentary at the initial time on the boundary as $k \to \infty$. 
Roughly speaking, in order to build such $\ul{u}(\cdot; k)$ growing rapidly on the boundary, we essentially want to find a function $\phi\in C^2((-b, b))$ such that as $x\to \pm b$, the following two properties hold:
\begin{itemize}
\item[(a)] $\phi(x)\to \infty$;
\item[(b)] $f\left(g(\phi_x(x))\phi_{xx}(x)\right)\to \infty$.
\end{itemize}
%$\phi$ and $f(g(\phi_x)\phi_{xx})$ evaluated at $x$ diverge to $\infty$ as $x \to \pm b$. 
The critical case when $\alpha = 2$ is particularly difficult to handle. Note that in this case, while the property (b) holds,  (a) fails if we choose $\phi(x) = (|b|-|x|)^\beta$ near $x=\pm b$ with $\beta \in (0,1)$; on the other hand, if we let $\phi(x) = -\log(|b|-|x|)$ near $x=\pm b$, then (a) holds but (b) fails. 
It turns out that $\phi(x) = -\log(\log(|b|-|x|))$ satisfies both properties and thus becomes an appropriate choice for our purpose. See more rigorous details for the construction of $\ul{u}(\cdot, k)$ based on this $\phi$ in Section \ref{sec:nonexistence}. 

Another goal of this work is to understand the large time asymptotics for \eqref{flow eq}--\eqref{initial}. We aim to show the convergence of the solution to the one of traveling wave type satisfying \eqref{bdry}, which generalizes the results in \cite{AW1, AW2} for fully nonlinear diffusions including power curvature flows. To this end, we begin with solving 
%The existence of a traveling wave can be translated into the solvability of the profile equation 
the stationary problem
\begin{numcases}{}
c = f(g(W_x)W_{xx}) \qquad \text{in $(-b,b)$}, \label{traveling eq} \\
W_x(b) = \infty, \quad W_x(-b) = -\infty, \label{traveling bdry}
\end{numcases}
where $W\in C([-b,b])$ and $c\in \R$ are both unknowns. 
This problem fully characterizes the traveling wave solutions of \eqref{flow eq}--\eqref{initial}, as $c\in \R$ stands for the traveling speed while $W$ gives the traveling profile. 
%A solution of \eqref{traveling eq} and \eqref{traveling bdry} generate a traveling wave type solution $W(x) + ct$ of \eqref{flow eq} and \eqref{bdry}, thus we call $W$ and $c$ as a profile function and a speed of the traveling wave type solution, respectively. 

It turns out that there exists a unique $c>0$ that yields a unique solution $W$ up to constants. 
\begin{thm}[Solvability of stationary problem]\label{thm:ex-tw}
Let $b>0$. 
Assume that functions $f$ and $g$ satisfy (A1) and (A2). 
Then there exists a pair 
\[
(W, c) \in \{C([-b,b]) \cap C^2((-b,b))\} \times (0, \infty)
\] 
satisfying  \eqref{traveling eq} and \eqref{traveling bdry}.
Moreover, the following properties hold. 
\begin{enumerate}
\item[(i)] If $(\tilde{W}, \tilde{c})\in \{C([-b,b]) \cap C^{0,1}((-b,b))\} \times \mathbb{R}$ is another pair satisfying \eqref{traveling eq} and \eqref{traveling bdry}  in the viscosity sense,  then $\tilde{c}=c$ holds and there exists a constant $a\in \R$ such that 
\[
\tilde{W} = W+a \quad \text{in $[-b, b]$.}
\]
%\item[(ii)] The constant $c$ is positive.
\item[(ii)] The profile function $W$ is strictly convex and of class $C^{\frac{\alpha-2}{\alpha-1}}([-b,b])$. % \cap C^2((-b,b))$. 
\end{enumerate} 
\end{thm}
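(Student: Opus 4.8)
My plan is to reduce \eqref{traveling eq}--\eqref{traveling bdry} to an explicitly solvable autonomous ODE for the derivative of the profile. Since (A1) makes $f$ a strictly increasing homeomorphism of $\R$ with $f(0)=0$, a $C^2$ profile solves \eqref{traveling eq} if and only if $W_{xx}=f^{-1}(c)/g(W_x)$ in $(-b,b)$; as $g>0$, such a $W$ is strictly convex precisely when $c>0$, in which case $W_x$ is strictly increasing and, if \eqref{traveling bdry} is to hold, must sweep out all of $\R$. Set $G(p):=\int_0^p g(q)\,dq$. By (A2), $g(q)\sim C_\pm|q|^{-\alpha}$ as $q\to\pm\infty$ with $\alpha>2>1$, so $G_+:=\int_0^\infty g$ and $G_-:=\int_{-\infty}^0 g$ are finite and positive and $G\colon\R\to(-G_-,G_+)$ is an increasing $C^1$ bijection. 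Integrating $g(W_x)W_x'=f^{-1}(c)$ from the unique point $x_0$ with $W_x(x_0)=0$ yields $G(W_x(x))=f^{-1}(c)(x-x_0)$; letting $x\to b^-$ and $x\to -b^+$ and using \eqref{traveling bdry} forces $G_+=f^{-1}(c)(b-x_0)$ and $G_-=f^{-1}(c)(b+x_0)$. Adding these identities pins down the speed and the pivot uniquely: $f^{-1}(c)=(G_++G_-)/(2b)=:\mu>0$ and $x_0=b-G_+/\mu\in(-b,b)$.

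For existence I run this computation backwards: with $\mu,x_0$ as above and $c:=f(\mu)>0$, define $W_x(x):=G^{-1}(\mu(x-x_0))$ on $(-b,b)$ (well-defined since $\mu(x-x_0)$ ranges over $(-G_-,G_+)$) and $W(x):=\int_{x_0}^x W_x(s)\,ds$. Then $W\in C^2((-b,b))$, a direct differentiation gives $g(W_x)W_{xx}\equiv\mu$ so that \eqref{traveling eq} holds, and $W_x\to\pm\infty$ as $x\to\pm b$. From $G_+-G(p)\sim\frac{C_+}{\alpha-1}p^{-(\alpha-1)}$ one gets $W_x(x)\sim\big(\frac{C_+}{(\alpha-1)\mu}\big)^{1/(\alpha-1)}(b-x)^{-1/(\alpha-1)}$ as $x\to b^-$, and symmetrically at $-b$; since $\alpha>2$ gives $1/(\alpha-1)<1$, $W_x$ is integrable up to the endpoints, so $W$ extends to an element of $C([-b,b])$. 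Finally, the classical blow-up $W_x\to\pm\infty$ immediately rules out any $C^2$ function touching $W$ from below at $\pm b$, so \eqref{traveling bdry} holds in the viscosity sense, and $(W,c)$ is the desired pair.

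For the rigidity statement (i), let $(\tilde W,\tilde c)$ be as stated. I would first upgrade $\tilde W$ to a classical solution: on each compact subinterval of $(-b,b)$, $\tilde W$ is Lipschitz, so $f^{-1}(\tilde c)/g(\tilde W_x)$ is bounded there, and the viscosity subsolution (resp.\ supersolution) inequality forces $\tilde W$ to be locally semiconvex (resp.\ semiconcave); hence $\tilde W\in C^{1,1}_\loc((-b,b))$, so $\tilde W_{xx}=f^{-1}(\tilde c)/g(\tilde W_x)$ a.e.\ with continuous right-hand side, and a bootstrap then gives $\tilde W\in C^2((-b,b))$. Then $\tilde W_{xx}$ has constant sign, so $\tilde W$ is convex, concave, or affine; in the latter two cases a suitable affine or quadratic function touches $\tilde W$ from below at an endpoint, contradicting \eqref{traveling bdry} in the viscosity sense, so $\tilde W$ is strictly convex and $\tilde c>0$; likewise, were $\tilde W_x$ to have a finite limit at an endpoint, a line of slightly larger (resp.\ smaller) slope would touch $\tilde W$ from below there, so $\tilde W_x\to\pm\infty$. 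The integration identity of the first paragraph now applies verbatim to $\tilde W$ and forces $f^{-1}(\tilde c)=(G_++G_-)/(2b)=f^{-1}(c)$, hence $\tilde c=c$, together with $\tilde W_x\equiv G^{-1}(\mu(\cdot-x_0))\equiv W_x$; thus $\tilde W-W$ is constant on $(-b,b)$, hence on $[-b,b]$ by continuity. (An alternative is to compare the traveling waves $W(x)+ct$ and $\tilde W(x)+\tilde c t$, both of which solve \eqref{flow eq}--\eqref{bdry}, using the comparison principle of Section~\ref{sec:comparison}: translating one vertically above the other and letting $t\to\infty$ yields $\tilde c=c$ and then the rigidity of the translation.)

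For (ii), strict convexity is immediate from $W_{xx}=f^{-1}(c)/g(W_x)>0$. For the Hölder regularity, the integration identity yields the gradient bound $|W_x(x)|\le C\,\dist(x,\{-b,b\})^{-1/(\alpha-1)}$ on $(-b,b)$, where the exponent $\frac1{\alpha-1}=1-\frac{\alpha-2}{\alpha-1}$ lies in $(0,1)$; splitting $[-b,b]$ into the two boundary layers, where $W$ has the asymptotics found above, and a central interval, where $W\in C^2$, this gradient blow-up rate is precisely what is needed to conclude $W\in C^{(\alpha-2)/(\alpha-1)}([-b,b])$. The step I expect to be the main obstacle is the rigidity argument in (i): promoting a merely locally Lipschitz viscosity solution to a genuine $C^2$ solution so that the explicit ODE analysis applies, and correctly enforcing the singular Neumann condition in the viscosity sense---in particular excluding finite boundary slopes and non-convex profiles. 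Everything else is an essentially explicit computation, with the hypothesis $\alpha>2$ in (A2) entering exactly to make $G_\pm$ finite and $W_x$ integrable near $\pm b$.
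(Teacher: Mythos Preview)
Your proposal is correct and follows essentially the same route as the paper: both arguments introduce an antiderivative $G$ of $g$, reduce \eqref{traveling eq} to $G(W_x)'=f^{-1}(c)$, read off $c$ from the total mass $\int_{\R}g$, build $W$ by integrating $G^{-1}$, and for (i) promote the viscosity solution to $C^{1,1}_{\loc}$ via the semiconvex-plus-semiconcave observation (this is the paper's Lemma~\ref{lem:regularity-TW}) before running the same integral identity. Your treatment is in fact slightly more complete than the paper's in one respect: you explicitly derive $\tilde W_x\to\pm\infty$ at $\pm b$ from the viscosity form of \eqref{traveling bdry} via the trichotomy/touching-line argument, whereas the paper invokes the boundary condition at that step without spelling this out.
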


The solvability of \eqref{traveling eq} and \eqref{traveling bdry} will be discussed by using the shooting method.  Roughly speaking, we can use the inverse function $f^{-1}$ to rewrite the equation as 
\beq\label{transformed eq2}
f^{-1}(c)=g(W_x)W_{xx}=G(W_x)_x,
\eeq
where $G$ is an antiderivative of $g$. In order to handle this ordinary differential equation, we set the function class to be $C([-b,b]) \cap C^{0,1}((-b,b))$. 
% that the target profile function $W$ is locally Lipschitz continuity in $(-b, b)$ so that 
Noticing that
\[
W_{xx}={f^{-1}(c)\over g(W_x)}
\]
holds at least formally, we can get a local bound of $W_{xx}$ for $W\in C^{0,1}((-b,b))$, which enables us to integrate \eqref{transformed eq2} in $x$. %The convexity of $W$ is an immediate consequence of the positivity of $c$. 
Then it becomes easier for us to use the integral form to find a solution $(W, c)$ and to prove the uniqueness in the sense of (i).  

We finally show the large time behavior of the unique solution of \eqref{flow eq}--\eqref{initial}. 
\begin{thm}[Large time asymptotics]\label{thm:conver-sol}
Let $b>0$. 
Assume that functions $f$ and $g$ satisfy (A1) and (A2). 
Assume also that $f^{-1}$ is Lipschitz away from $s=0$, $g$ is Lipschitz in $\mathbb{R}$ and $u_0 \in C([-b,b])$ is convex. 
Let $u$ and $(W,c)$ be respectively the solution of \eqref{flow eq}--\eqref{initial} obtained in Theorem \ref{thm existence perron} and the solution of \eqref{traveling eq} and \eqref{traveling bdry} obtained in Theorem \ref{thm:ex-tw}. 
Then there exists a constant $m \in \mathbb{R}$ such that 
\beq\label{convergence intro}
 \sup_{x\in [-b, b]} \left|u(x, t) - (W(x) + m + ct)\right| \to 0 \quad \text{as} \quad t \to \infty. 
\eeq
\end{thm}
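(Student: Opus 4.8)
The plan is to combine comparison with spatially shifted traveling waves, a compactness argument that crucially exploits the convexity of $u_0$, and a strong maximum principle for the rewritten equation. First, since $u_0$ and $W$ both lie in $C([-b,b])$, there are constants $a_-\le a_+$ with $W+a_-\le u_0\le W+a_+$ on $[-b,b]$. For every $a\in\R$ the function $(x,t)\mapsto W(x)+ct+a$ satisfies \eqref{flow eq} in $(-b,b)\times(0,\infty)$ by \eqref{traveling eq} and the singular condition \eqref{bdry} by \eqref{traveling bdry}, so it is simultaneously a sub- and a supersolution; the comparison principle underlying Theorem \ref{thm existence perron} then yields
\[
W(x)+ct+a_-\ \le\ u(x,t)\ \le\ W(x)+ct+a_+,\qquad (x,t)\in[-b,b]\times[0,\infty).
\]
Set $m^-(t):=\min_{[-b,b]}(u(\cdot,t)-W-ct)$ and $m^+(t):=\max_{[-b,b]}(u(\cdot,t)-W-ct)$. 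Comparing $u$ at a later time with the traveling wave that is sharply ordered with $u(\cdot,s)$ shows that $m^-$ is nondecreasing and $m^+$ is nonincreasing in $t$; both are bounded by the trapping, hence $m^-(t)\to\mu^-$ and $m^+(t)\to\mu^+$ with $\mu^-\le\mu^+$. Since $m^-(t)\le u(x,t)-W(x)-ct\le m^+(t)$, the conclusion \eqref{convergence intro} with $m:=\mu^-$ follows once we show $\mu^-=\mu^+$.

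Next I would extract an entire limit profile using convexity. Convexity is preserved by \eqref{flow eq} (as one sees from the maximum principle applied to $u_{xx}$ of smooth approximations), so $u(\cdot,t)$ is convex for every $t\ge0$; together with \eqref{bdry} this makes $u(\cdot,t)$, for $t>0$, a convex function on $[-b,b]$ whose slope tends to $\pm\infty$ at $\pm b$. Fix $t_n\to\infty$ and put $v_n(x,s):=u(x,s+t_n)-c(s+t_n)$ for $s\ge-t_n$; each $v_n$ solves $v_t=f(g(v_x)v_{xx})-c$ with the boundary condition \eqref{bdry}, is uniformly bounded by Step 1, and is convex in $x$. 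A convex function that is uniformly bounded on $[-b,b]$ is uniformly Lipschitz on each compact subinterval of $(-b,b)$; invoking the equation together with comparison barriers, this upgrades to local equicontinuity of $\{v_n\}$ in $(x,s)$ on $(-b,b)\times\R$, so along a subsequence $v_n\to v_\infty$ locally uniformly there. By stability of viscosity solutions, $v_\infty$ is an entire (time-global) solution of $v_t=f(g(v_x)v_{xx})-c$ in $(-b,b)$; being convex and bounded it extends to $[-b,b]\times\R$, and one checks it still satisfies \eqref{bdry}. Finally $\mu^-\le v_\infty(x,s)-W(x)\le\mu^+$ on $[-b,b]\times\R$, and choosing points where $m^\mp(t_n)$ is attained and passing to the limit produces $(\xi,0)$ and $(\eta,0)$ with $v_\infty(\xi,0)-W(\xi)=\mu^-$ and $v_\infty(\eta,0)-W(\eta)=\mu^+$.

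For the rigidity step, assume first $\xi\in(-b,b)$. Then $v_\infty-(W+\mu^-)$ is a nonnegative entire supersolution vanishing at the interior point $(\xi,0)$. Rewriting the equation as $f^{-1}(v_t+c)=g(v_x)v_{xx}$ and using that $f^{-1}$ is Lipschitz away from $0$ and $g$ is Lipschitz, the operator is locally uniformly parabolic along the relevant range of first and second derivatives, so the strong minimum principle forces $v_\infty\equiv W+\mu^-$ for $t\le0$, hence on all of $[-b,b]\times\R$ since $v_\infty$ is a solution and $W+\mu^-$ is stationary. In particular $\mu^+=v_\infty(\eta,0)-W(\eta)=\mu^-$, which finishes the proof. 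Equivalently, once $v_\infty$ is shown to be time-independent one may simply note that $v_\infty=W+\mu^-$ then solves \eqref{traveling eq}--\eqref{traveling bdry}, so it is a traveling wave by Theorem \ref{thm:ex-tw}(i), and conclude as above.

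The main obstacle is concentrated near the singular boundary $x=\pm b$ in Steps 2 and 3: the trapping by shifted copies of $W$ does not by itself give equicontinuity of $\{u(\cdot,t)-W\}$ up to $\pm b$, so one needs a careful barrier analysis — exploiting the precise $C^{\frac{\alpha-2}{\alpha-1}}([-b,b])$ regularity of $W$ from Theorem \ref{thm:ex-tw}(ii) — to rule out oscillation of $u(\cdot,t)-W$ in a boundary layer, equivalently to ensure the contact points $\xi,\eta$ can be taken in $(-b,b)$ (or else to replace the interior strong minimum principle by a Hopf-type boundary argument adapted to \eqref{bdry}). The Lipschitz hypotheses on $f^{-1}$ and $g$ enter precisely to make the strong minimum principle of Step 3 available for the otherwise degenerate fully nonlinear operator.
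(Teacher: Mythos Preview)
Your overall architecture matches the paper's---trap by shifted traveling waves, define the monotone envelopes $m^\pm(t)$, pass to a subsequential limit, and conclude by a strong maximum principle---but there are two genuine gaps.

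First, the compactness step in time is not justified. You write that local Lipschitz regularity in $x$ (from convexity) ``upgrades to local equicontinuity of $\{v_n\}$ in $(x,s)$'' by ``invoking the equation together with comparison barriers,'' but for a merely continuous convex $u_0$ there are no obvious time-barriers available. The paper does \emph{not} argue directly: it first approximates $u_0$ by strictly convex $\psi_\varepsilon$ satisfying (A3) with $L_-(\varepsilon)>0$ (Lemma~\ref{lem regularization}); for such data Proposition~\ref{prop lip perron} gives a two-sided time-Lipschitz bound $L_-\le u_t\le L_+$, and this in turn yields uniform $C^{(\alpha-2)/(\alpha-1)}$ regularity in $x$ up to $x=\pm b$. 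Only then is Arzel\`a--Ascoli available. The general convex case is recovered at the very end by a Cauchy-sequence argument using \eqref{initial dependence}. Your direct route skips this reduction and the needed equicontinuity does not follow.

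Second, the boundary case $\xi,\eta\in\{\pm b\}$ is acknowledged as the ``main obstacle'' but not resolved; the suggested Hopf-type argument is vague and would be difficult for this degenerate operator under the singular condition \eqref{bdry}. The paper's mechanism is different and concrete: once the uniform-in-$t$ H\"older bound up to the boundary is in hand, the relaxed limits $\ol W$, $\ul W$ of $u(\cdot,t)-ct$ as $t\to\infty$ are locally Lipschitz in $(-b,b)$ and, by a stability argument (Proposition~\ref{prop:profile-sol}), are respectively a subsolution and a supersolution of the \emph{stationary} problem \eqref{traveling eq}--\eqref{traveling bdry}. Theorem~\ref{thm:profile-shift} then forces $\ul W=W+\text{const}$ on all of $[-b,b]$, which is incompatible with $\ul W(b)-W(b)=\ol m>\ul m=\ul W(-b)-W(-b)$. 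For the interior case the paper applies the strong comparison (Proposition~\ref{prop strong cp}) with the \emph{smooth} traveling wave $W+ct$, whose time derivative is $c>0$; your invocation of the strong minimum principle should be phrased this way rather than via ``local uniform parabolicity'' of the limit $v_\infty$, whose regularity is not known.
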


It turns out that, in order to prove Theorem \ref{thm:conver-sol}, one can focus on the case when $u_0$ satisfies (A3) with $L_->0$, since the result for general continuous convex initial values can be obtained by utilizing the approximation argument analogous to that in the proof for the existence theorem. %Let us thus below explain more on the regularity assumption (A3) and the convexity condition $L_->0$.

%Note that Theorem \ref{thm:conver-sol} involves
Under (A3) and the condition $L_->0$, we can obtain
several uniform regularity estimates, which are important for us to carry out compactness arguments. 
The assumption (A3) enables us to deduce the Lipschitz continuity of $u$ global in time. Indeed, we can use (A3) to show that
\beq\label{formal time lip}
L_-\leq u_t\leq L_+
\eeq
in the viscosity sense. 

Moreover, the additional condition $L_->0$, related to the convexity of $u_0$ in the general case, 
largely helps us obtain H\"{o}lder continuity of $u(\cdot, t)$ in $[-b, b]$ uniformly for all $t\geq 0$. The uniform H\"{o}lder regularity further enables us to apply the Ascoli-Arzel\`{a} theorem to find a convergent subsequence of $u(\cdot, t)-ct$ as $t\to \infty$. The full convergence is shown by adopting a strong comparison principle, which again relies on \eqref{transformed eq} to avoid the high nonlinearity and degeneracy of the original equation. Using \eqref{transformed eq}, we can obtain the strong comparison by adapting the method in \cite{Dal}. 

To further see the connection of the assumption $L_->0$ with the spatial regularity, let us use 
\eqref{transformed eq} to rewrite our equation again. In fact, by \eqref{formal time lip} it is easily seen that any solution of \eqref{flow eq} is a subsolution of 
\[
-u_{xx}+ {f^{-1}(L_-)\over g(u_x)}\leq 0,
\]
which is of viscous Hamilton-Jacobi type. In \cite{CaLePo, Ba4, CadSi, ArTr}, the H\"{o}lder regularity of viscosity subsolutions are established for such kind of nonlinear equations, but all of these results require a strong coercivity assumption on the gradient term, %  for viscous Hamilton-Jacobi equations with Hamiltonians strongly coercive in the gradient. In fact, these results hold even for viscosity subsolutions as long as the coercivity holds. In view of , we can see that subsolutions of \eqref{flow eq} are also subsolutions of equations of the viscous Hamilton-Jacobi type:%The strong coercivity condition on gradient terms in the literature requires 
which is equivalent to the condition $f^{-1}(L_-)>0$ in our circumstances. This somehow explains why we also need the assumption $L_->0$ or, more generally, the convexity of $u_0$ for our asymptotic analysis. It would be interesting to obtain the same large time behavior without assuming the convexity of $u_0$. We do expect that the solution eventually becomes convex in space even if it is initially not convex.

\subsection{Organization of the paper} The rest of the paper is organized in the following way. We first introduce the definition of viscosity solutions of the singular Neumann boundary problems in Section \ref{sec:def-vis}. Section \ref{sec:comparison} is devoted to the proof of the comparison principle.  In Section \ref{sec:existence}, we show the existence of solutions under (A2) and prove the nonexistence result under (A2'). We give a detailed proof of Theorem \ref{thm:ex-tw} for the stationary problem in Section \ref{sec:stationary} and show the large time behavior as in Theorem \ref{thm:conver-sol} in Section \ref{sec:asymptotics}. 

\subsection*{Acknowledgments} 
%The authors are grateful to Prof. Hiroyoshi Mitake for pointing out an unneeded regularity assumption in the comparison theorem in the first version of the manuscript. 
The authors would like to thank the anonymous referees for providing very helpful comments. 
The work of the first author is supported by JSPS Grants-in-Aid for Scientific Research, No.\ JP19K14572, JP18H03670 and JP20H01801.
The work of the second author is supported by JSPS Grant-in-Aid for Scientific Research, No.\ JP19K03574.

\section{Definition of viscosity solutions}\label{sec:def-vis}

We need to adopt the viscosity solution theory \cite{CIL, Gbook} to handle \eqref{flow eq}, which is in general fully nonlinear and degenerate parabolic. 
We also refer to \cite{LaL2} for introduction to the singular boundary condition in the viscosity sense. For convenience of notation, hereafter we denote
\[
Q:=(-b, b)\times (0, \infty), \quad \ol{Q}_T:=[-b, b]\times [0, T)
\]
for any $T>0$ and let %denotes the original elliptic operator, that is, 
%\beq\label{flow op}
\[
F(p, z):=-f(g(p)z)
\]
%\eeq
for $p, z\in \R$.

% the notation $F$ for the elliptic operator as in \eqref{flow op}.

\begin{defi}[Solutions of singular Neumann problem]\label{def singular}
An upper semicontinuous function $u$ in $\ol{Q}$ is called a subsolution of \eqref{flow eq} and \eqref{bdry} if whenever there exist $(x_0, t_0)\in Q$ and $\phi\in C^2(\ol{Q})$ such that $u-\phi$ attains a local maximum at $(x_0, t_0)$, we have 
\[
\phi_t(x_0, t_0)+F\left(\phi_x(x_0, t_0), \phi_{xx}(x_0, t_0)\right)\leq 0.
\]
A lower semicontinuous function $u$ in $\ol{Q}$ is called a supersolution of \eqref{flow eq} and \eqref{bdry} if the following conditions hold:
\begin{enumerate}
\item[(i)] Whenever there exist $(x_0, t_0)\in Q$ and $\phi\in C^2(\ol{Q})$ such that $u-\phi$ attains a local minimum at $(x_0, t_0)$, we have 
\[
\phi_t(x_0, t_0)+F\left(\phi_x(x_0, t_0), \phi_{xx}(x_0, t_0)\right)\geq 0.
\]
\item[(ii)] For any $t>0$ and any function $\phi\in C^2(\overline{Q})$, $u-\phi$ never attains a local minimum at the point $(b, t)$ or $(-b, t)$.  
\end{enumerate}
A continuous function $u$ in $\ol{Q}$ is said to be a solution of \eqref{flow eq} and \eqref{bdry} if $u$ is both a subsolution and a supersolution of \eqref{flow eq} and \eqref{bdry}.
\end{defi}

The condition (ii) amounts to saying that supersolutions cannot be tested from below by any smooth functions at boundary points. On the other hand, we do not impose any condition on the boundary behavior of subsolutions. 

One can similarly define solutions of the corresponding stationary problem \eqref{traveling eq}--\eqref{traveling bdry} for a given $c\in \R$.

\begin{defi}[Solutions of stationary singular Neumann problem]\label{def singular stationary}
An upper semicontinuous function $W$ in $[-b, b]$ is called a subsolution of \eqref{traveling eq}--\eqref{traveling bdry} if whenever there exist $x_0\in (-b, b)$ and $\phi\in C^2([-b, b])$ such that $W-\phi$ attains a local maximum at $x_0\in (-b, b)$, we have 
\[%\beq\label{eq sub stat}
c+F\left(\phi_x(x_0), \phi_{xx}(x_0, t_0)\right)\leq 0.
\]
A lower semicontinuous function $W$ in $\ol{Q}$ is called a supersolution of \eqref{traveling eq} and \eqref{traveling bdry} if the following conditions hold:
\begin{enumerate}
\item[(i)] Whenever there exist $x_0\in Q$ and $\phi\in C^2([-b, b])$ such that $W-\phi$ attains a local minimum at $x_0$, we have 
\[
c+F\left(\phi_x(x_0), \phi_{xx}(x_0)\right)\geq 0.
\]
\item[(ii)] For any function $\phi\in C^2([-b, b])$, $W(x)-\phi(x)$ never attains a local minimum at $x=b$ or $x=-b$. 
\end{enumerate}
A function $W\in C([-b, b])$ is called a solution of \eqref{traveling eq} and \eqref{traveling bdry} if $W$ is both a subsolution and a supersolution of \eqref{traveling eq} and \eqref{traveling bdry}.
\end{defi}

As a standard remark in the viscosity solution theory, in Definition \ref{def singular}, %and Definition \ref{def truncated}, 
we may only consider test functions $\phi$ such that $u-\phi$ attains a strict maximum or minimum at $(x_0, t_0)$ by adding the quartic function $a(x-x_0)^4$ with $a\in \R$. Moreover, we may use semijets $\overline{P}^\pm u(x_0, t_0)$ to rewrite the definitions; see \cite{CIL} for more details on the semijets. 

\section{Comparison principle}\label{sec:comparison}

In this section we provide a comparison result. We will use the comparison theorem in Section \ref{sec:existence}.
It will also be applied to study large time behavior under the assumptions (A1)--(A3). 

%\subsection{Comparison theorem with singular Neumann condition} Let us begin with a comparison principle for  \eqref{flow eq}--\eqref{bdry}. 

\begin{thm}[Comparison principle]\label{thm cp}
Let $b>0$. 
Assume that $f\in C(\R)$ satisfies (A1) and $g\in C(\R)$ is nonnegative. %satisfy (A1) and (A2), respectively. 
Let $u$ and $v$ be respectively a sub- and a supersolution of \eqref{flow eq} and \eqref{bdry}. 
Assume that $u(\cdot, 0)$ is continuous in $[-b, b]$ and $u$ is locally bounded in $\ol{Q}=[-b, b]\times [0, \infty)$ in the sense that
\begin{equation}\label{locally_bdd} 
\|u\|_{L^\infty((-b,b) \times (0,T))} < \infty \quad \text{ for any $T\in (0, \infty)$}. 
\end{equation}
Assume in addition that $u$ is accessible on the boundary; namely, for any $t\in (0, \infty)$,
%for any $(x, t)\in [-b, b]\times [0, \infty)$,
\beq\label{space conti}
\limsup_{(x, s)\to (b, t)}  u\left(x, s\right) =  u(b, t)\quad \text{and} \quad \limsup_{(x, s)\to (-b, t)}  u\left(x, s\right) =  u(-b, t).
\eeq
If 
\beq\label{initial comparison}
u(\cdot, 0)\leq v(\cdot, 0) \quad \text{ in $[-b, b]$},
\eeq
 then $u\leq v$ in $\ol{Q}$.
\end{thm}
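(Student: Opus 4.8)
The plan is to prove the comparison by a doubling-of-variables argument tailored to the singular Neumann boundary condition, preceded by two reductions. First, it suffices to fix $T>0$ and show $u\le v$ on $[-b,b]\times[0,T]$; replacing $u$ by $u-\eta t$ — which, since $F$ is independent of $t$, is a \emph{strict} subsolution (i.e.\ $\phi_t+F(\phi_x,\phi_{xx})\le-\eta$ at every test point) — and eventually sending $\eta\to0$, we may assume $u$ is such a strict subsolution. Second, following the strategy sketched after \eqref{transformed eq}, I replace $u$ by its time sup-convolution
\[
u^{\epsilon}(x,t):=\sup_{s\ge0}\Bigl\{u(x,s)-\tfrac{|t-s|^{2}}{2\epsilon}\Bigr\}.
\]
By \eqref{locally_bdd} the maximizer $s$ lies within $O(\sqrt{\epsilon})$ of $t$, so $u^{\epsilon}$ is locally Lipschitz in $t$ with constant $O(\epsilon^{-1/2})$, $u^{\epsilon}\ge u$, $u^{\epsilon}\downarrow u$ as $\epsilon\downarrow0$, and — because $F$ does not depend on $(x,t)$ explicitly — $u^{\epsilon}$ is again a strict subsolution of the same equation for $t>c\sqrt{\epsilon}$; on the thin strip $0\le t\le c\sqrt{\epsilon}$ one argues directly from the continuity of $u(\cdot,0)$, the lower semicontinuity of $v$ and \eqref{initial comparison}. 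A Dini-type argument using the continuity of $u(\cdot,0)$ and \eqref{locally_bdd} gives $u^{\epsilon}(\cdot,0)\le v(\cdot,0)+\omega(\epsilon)$ with $\omega(\epsilon)\to0$. It therefore suffices to show $M^{\epsilon}:=\sup_{[-b,b]\times[0,T]}(u^{\epsilon}-v)\le\omega(\epsilon)$ for each fixed $\epsilon$; suppose not.

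Next I double the variables. For $\alpha,\beta,\delta>0$ I maximize over the compact set $([-b,b]\times[0,T])^{2}$ the function
\[
\Phi(x,t,y,s):=u^{\epsilon}(x,t)-v(y,s)-\frac{|x-y|^{2}}{2\alpha}-\frac{|t-s|^{2}}{2\beta}-\delta\,\zeta(x),
\]
where $\zeta\in C^{2}((-b,b))$ is a boundary penalization blowing up at $x=\pm b$, chosen in the spirit of \cite{So} so as to force the $x$-maximizer into the open interval while contributing a gradient term $\delta\zeta'(\hat x)$ that remains negligible in the limit. Let $(\hat x,\hat t,\hat y,\hat s)$ be a maximum point (it exists since $u^{\epsilon}$ is upper and $v$ is lower semicontinuous). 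The classical penalization estimates give $|\hat x-\hat y|+|\hat t-\hat s|\to0$ and $\alpha^{-1}|\hat x-\hat y|^{2}+\beta^{-1}|\hat t-\hat s|^{2}\to0$ as $\alpha,\beta\to0$. By construction $\hat x\in(-b,b)$, and also $\hat y\in(-b,b)$: if $\hat y=\pm b$, then $v(\cdot,\hat s)$ is touched from below at $\hat y$ by the $C^{2}$ function $y\mapsto u^{\epsilon}(\hat x,\hat t)-\tfrac{|\hat x-y|^{2}}{2\alpha}-\cdots$, which contradicts Definition~\ref{def singular}(ii). Using \eqref{initial comparison}, the continuity of $u(\cdot,0)$ and the assumption $M^{\epsilon}>\omega(\epsilon)$, one excludes $\hat t=0$ and $\hat s=0$, so all points are interior in space and time. (The accessibility hypothesis \eqref{space conti} is used to realize the relevant suprema of $u^{\epsilon}-v$ along interior points and, together with $\zeta$, to keep $\hat x$ away from $\pm b$.)

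At these interior points I invoke the parabolic version of Ishii's lemma (\cite[Theorem~8.3]{CIL}) to obtain $\tau\in\R$ and $X\le Y$ such that $(\tau,p_{u},X)\in\ol P^{+}u^{\epsilon}(\hat x,\hat t)$ and $(\tau,p_{v},Y)\in\ol P^{-}v(\hat y,\hat s)$, with $p_{v}=\alpha^{-1}(\hat x-\hat y)$ and $p_{u}=p_{v}+\delta\zeta'(\hat x)$. The strict sub- and the supersolution inequalities give
\[
\tau+F(p_{u},X)\le-\eta,\qquad \tau+F(p_{v},Y)\ge0,
\]
and, since $F(p,z)=-f(g(p)z)$ with $f$ strictly increasing, subtraction yields $g(p_{v})Y<g(p_{u})X$. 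Here the time sup-convolution becomes indispensable: for fixed $\epsilon$ one has $|\tau|\le C\epsilon^{-1/2}$, so the two inequalities, read through the inverse $f^{-1}$ as in \eqref{transformed eq}, confine $g(p_{u})X$ and $g(p_{v})Y$ to a bounded range, and hence the curvature variables $X,Y$ stay in a compact set as $\alpha,\beta\to0$ — precisely the degeneracy of the left-hand side of \eqref{transformed eq} being neutralized. Passing to the limit $\alpha,\beta\to0$ and then $\delta\to0$ (so $\delta\zeta'(\hat x)\to0$, whence $p_{u},p_{v}$ converge to a common $p_{0}$ and $X,Y$ subconverge to $X_{0}\le Y_{0}$), we arrive at $g(p_{0})Y_{0}<g(p_{0})X_{0}$; but $g\ge0$ and $X_{0}\le Y_{0}$ force $g(p_{0})X_{0}\le g(p_{0})Y_{0}$, a contradiction. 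Hence $M^{\epsilon}\le\omega(\epsilon)$, and letting $\epsilon\to0$, then $\eta\to0$, then $T\to\infty$ gives $u\le v$ in $\ol Q$.

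The step I expect to be the main obstacle is the treatment of the singular boundary, i.e.\ the choice of $\zeta$ (or, equivalently, an inward perturbation of $v$ exploiting the trivial interior-ball property of $(-b,b)$ and the accessibility \eqref{space conti} of $u$, as in \cite{So}): $\zeta$ must simultaneously keep the maximizer $\hat x$ strictly inside $(-b,b)$ — recall the subsolution carries \emph{no} boundary condition and so cannot be tested at $x=\pm b$ — and contribute a gradient term $\delta\zeta'(\hat x)$ that still vanishes in the limit, which rules out the naive choice $\zeta(x)=(b-x)^{-1}+(b+x)^{-1}$. A secondary technical point is the initial-time layer created by the sup-convolution in time, handled using the continuity of $u(\cdot,0)$, \eqref{locally_bdd}, and the lower semicontinuity of $v$ at $t=0$.
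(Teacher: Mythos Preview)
Your overall architecture---sup-convolution in time to obtain a Lipschitz bound on the time-jet $\tau$, followed by doubling of variables---matches the paper, but the treatment of the singular boundary and the final contradiction argument both contain genuine gaps.

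The paper does \emph{not} penalize with a function $\zeta$ blowing up at $x=\pm b$. Instead it uses the scaling device of \cite{So}: in the case where the maximizer of $u_\delta - v - 1/(T-t)$ lies at $\hat x=\pm b$, one compares $u_\delta(\lambda x,t)$ with $\lambda v(x,t)$ for $\lambda\in(0,1)$ close to $1$, and doubles variables via
\[
\Phi_{\lambda,\vep}(x,y,t)=u_\delta(x,t)-\lambda v(y,t)-\frac{|x-\lambda y|^{2}}{\vep}-\frac{1}{T-t}.
\]
Since the $x$-maximizer converges into $[-\lambda b,\lambda b]$, it is automatically interior; the accessibility hypothesis \eqref{space conti} is precisely what guarantees that the scaled maximum remains positive for $\lambda$ near $1$. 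Crucially, this yields $p_1=p_2$ \emph{exactly} and, by multiplying the Ishii matrix inequality on both sides by the vector $(\lambda,1)$, the relation $\lambda z_1\le z_2$. No limit in $\vep$ or $\lambda$ is taken to reach the contradiction: one combines $\lambda f^{-1}(\tau_1)-f^{-1}(\tau_2)>0$ (from the time-Lipschitz bound $|\tau_1|\le L_\delta$ and the penalty $1/(T-t)$, with $\lambda$ chosen close enough to $1$) with
\[
\lambda f^{-1}(\tau_1)-f^{-1}(\tau_2)\le \lambda g(p_1)z_1-g(p_2)z_2=g(p_1)(\lambda z_1-z_2)\le 0.
\]

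Your limiting scheme ``$\alpha,\beta\to0$ then $\delta\to0$'' is not justified as written. Nothing forces $p_v=\alpha^{-1}(\hat x-\hat y)$ to converge (the standard penalization estimate only gives $|\hat x-\hat y|=o(\sqrt\alpha)$), and the bound $|\tau|\le C\epsilon^{-1/2}$ controls $g(p_u)X$ and $g(p_v)Y$ but not $X,Y$ themselves, since $g\ge0$ is merely nonnegative and may vanish. The claim $\delta\zeta'(\hat x)\to0$ also fails in general, because $\hat x$ may drift to the boundary as $\delta\to0$---this is exactly the obstruction you flag in your final paragraph, and it is not a secondary technicality but the heart of the matter. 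The scaling device sidesteps all of these issues simultaneously by producing equal gradients and the needed second-order inequality at fixed $\vep$, so that a direct contradiction is obtained without any passage to the limit in the spatial doubling parameters.
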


%\begin{rmk}\label{rmk time lip}
%If \eqref{time lip} holds, then for any $C^2$ test function $\phi$ of $u$ at some $(x_0, t_0)\in Q$, we have 
%\[
%-L\leq \phi_t(x_0, t_0)\leq L.
%\]
%This further yields
%\beq\label{key est1}
%F(\phi(x_0, t_0), \phi_{xx}(x_0, t_0))\geq -L,
%\eeq
%where $F$ is the operator in \eqref{flow eq} given by \eqref{flow op}. 
%The same remark also applies to \eqref{time lip2} for $v$. 
%We shall use this property in our proof below. 
%\end{rmk}

\begin{rmk}\label{rmk:com-bdry}
The accessibility assumption \eqref{space conti} is necessary for Theorem \ref{thm cp}. 
Suppose that $u$ and $v$ are respectively a subsolution and a supersolution of \eqref{flow eq}--\eqref{bdry} fulfilling \eqref{initial comparison}. 
Then we can choose some $t_0>0$ and $\mu_\pm\in C([0,\infty))$ satisfying $\mu_{\pm}(0)=u_0(\pm b)$, 
\[
\mu_{+}(t)\geq u(b, t), \quad \mu_{-}(t)\geq u(-b, t)
\]
for all $0< t\leq t_0$, and
\[
\mu_+(t) >\max\{u(b, t),\ v(b, t)\}, \quad \mu_-(t) >\max\{u(-b, t),\ v(-b, t)\}
\]
for all $t> t_0$.  We construct another subsolution
\[ \tilde{u}(x,t) := 
\begin{cases}
u(x,t) & \mbox{if} \quad (x, t)\in (-b, b)\times [0, \infty),\\ %\text{ or } (x, t)\in \{\pm b\}\times [0, t_0)\\
\mu_+(t) & \mbox{if} \quad x=b, \; \; t \ge 0, \\
\mu_-(t) & \mbox{if} \quad x=-b, \; \; t\ge 0. 
\end{cases} \]

Such a subsolution clearly fails to satisfy \eqref{space conti}. 
%It is easily seen that $\tilde{u}$ is a subsolution of \eqref{flow eq}--\eqref{bdry}, 
Therefore the statement of the comparison principle cannot hold without assuming \eqref{space conti}.
%Indeed, if we assume that Theorem \ref{thm cp} holds without \eqref{space conti}, we have a contradiction by using a modified subsolution with $\alpha(t) > v(b,t)$ or $\beta(t) > v(-b,t)$ at some time $t>0$, where $v$ is a supersolution of \eqref{flow eq}--\eqref{bdry}.
\end{rmk}

\begin{proof}[Proof of Theorem \ref{thm cp}]
Our proof consists of several steps. 

\ul{\textit{Step 1. Sup-convolution}}

We first take the sup-convolution for $u$ with respect to the time variable. Fix $T > 0$ arbitrarily.  For any $\delta>0$ and $(x, t)\in \ol{Q}_T$, let 
\[
u_\delta(x, t)=\sup_{s\in [0, T+1]} \left\{u(x, s)-{|t-s|^2\over \delta}\right\}.
\]
By the definition, it is clear that 
\begin{equation}\label{remove_lipa}
u\leq u_{\delta_1}\leq u_{\delta_2} \quad \text{in} \; \;  \ol{Q}_T \quad \text{for all} \; \; 0<\delta_1\leq \delta_2. 
\end{equation} 
Due to the local boundedness \eqref{locally_bdd} and upper semicontinuity of $u$, there exists $s_\delta(x, t)\in [-b, b]\times [0, T+1]$ such that 
\beq\label{sup-convolution max}
u_\delta(x, t)=u(x, s_\delta(x, t))-{|t-s_\delta(x, t)|^2\over \delta}.
\eeq
Since $u_\delta(x, t)\geq u(x, t)$, we have 
\begin{equation}\label{remove_lipb}
|t - s_\delta(x, t)| \le  C_T \sqrt{\delta}, 
\end{equation}
where 
\beq\label{comparison constant1}
C_T=(2\|u\|_{L^\infty(\ol{Q}_{T+1})})^{1\over 2}.
\eeq
Hereafter we take $\delta>0$ satisfying 
\beq\label{delta range}
C_T\sqrt{\delta}<\min\{1,\ T\}.
\eeq
It is also not difficult to see that $u_\delta$ is upper semicontinuous and \eqref{space conti} holds for $u_\delta$, that is, 
\beq\label{space conti delta}
\limsup_{(x, s)\to (\pm b, t)}  u_\delta\left(x, s\right) =  u_\delta(\pm b, t)\quad \text{ for any $t\in (0, T)$.}
\eeq
For instance, to see \eqref{space conti delta} at $(b, t)$ for some $t<T$, we take $s_\delta(b, t)$ satisfying \eqref{sup-convolution max} with $x=b$ and use \eqref{space conti} to find a sequence $(y_k, \tau_k)\in (-b, b)$ such that $(y_k, \tau_k)\to (b, s_\delta(b, t))$ and 
$u(y_k, \tau_k)\to u(b, s_\delta(b, t))$
as $k\to \infty$. By the definition of $u_\delta$, we get
\[
u_\delta(y_k, t+\tau_k-s_\delta(b, t))\geq u(y_k, \tau_k)-{|t-s_\delta(b, t)|^2\over \delta}.
\]
Letting $k\to \infty$, we obtain
\[
\limsup_{k\to \infty} u_\delta(y_k, t+\tau_k-s_\delta(b, t))\geq u_\delta(b, t),
\]
which implies the desired accessibility of $u_\delta$ at $(b, t)$.

From the continuity of $u(\cdot,0)$, we get 
\beq\label{remove lip4}
\sup_{x\in [-b, b]} \left(u_\delta(x, 0)-u(x, 0)\right)\leq \omega(\delta)
\eeq
when $\delta>0$ is small, where $\omega$ is a modulus of continuity. 
Indeed, let $x_\delta \in [-b,b]$ be a maximizer of $u_\delta(\cdot, 0)-u(\cdot, 0)$ in $[-b, b]$.  Then using \eqref{remove_lipb}, the upper continuity of $u$ and the continuity of $u(\cdot, 0)$, we have 
\[
\limsup_{\delta\to 0}\sup_{x\in [-b, b]}(u_\delta(x, 0)-u(x, 0))\leq \limsup_{\delta\to 0}\left(u(x_\delta, s_\delta(x_\delta,0))-u(x_\delta, 0)\right)\leq 0,
\]
which yields \eqref{remove lip4} immediately.

Moreover, we can apply a standard argument to show that $u_\delta$ is a subsolution in $Q_{T, \delta}:=~(-b, b)\times (C_T\sqrt{\delta}, T)$. 
Indeed, if there exists $\phi\in C^2(\ol{Q})$ such that $u_\delta-\phi$ attains a local maximum at some $(x_0, t_0)\in Q_{T, \delta}$ with $\phi(x_0, t_0)=u_\delta(x_0, t_0)$, then 
\beq\label{remove lip2}
(x, t, s) \mapsto u(x, s)-{|t-s|^2\over \delta}-\phi(x, t)
\eeq
attains a local maximum in $(-b,b) \times (C_T\sqrt{\delta}, T) \times (0,T+1)$ at $(x_0, t_0, s_\delta(x_0, t_0))$. 
Since $t_0 > C_T\sqrt{\delta}$, it follows from \eqref{remove_lipb} and \eqref{delta range} that $0<s_\delta(x_0, t_0) < T+1$. 

Noticing that
\[
(x, s)\mapsto u(x, s)- {|t_0-s|^2\over \delta}-\phi(x, t_0)
\]
attains a maximum at $(x_0, s_\delta(x_0, t_0))$, we may apply the definition of subsolutions to get 
\beq\label{remove lip3}
{2(s_\delta(x_0, t_0)-t_0)\over \delta}+F(\phi_x(x_0, t_0), \phi_{xx}(x_0, t_0))\leq 0.
\eeq
On the other hand, the maximality in \eqref{remove lip2} also implies that 
\[
{2(s_\delta(x_0, t_0)-t_0)\over \delta}=\phi_t(x_0, t_0),
\]
which, combined with \eqref{remove lip3}, completes the verification of the subsolution property of $u_\delta$ at every $(x_0, t_0)\in Q_{T, \delta}$.

Another important property of $u_\delta$ is its Lipschitz continuity in time; for any fixed $x\in [-b, b]$, 
\beq\label{time lip}
|u_\delta(x, t_1)-u_\delta(x, t_2)|\leq L_\delta |t_1-t_2| \quad \text{for} \; \; t_1, t_2 \in [0,T], \; \; x \in [-b,b],  
\eeq
where $L_\delta>0$ is a constant depending only on $T > 0$ and $\delta>0$. 
Indeed, by direct calculation we obtain 
\[\begin{aligned}
&\; u_\delta(x, t_1) - u_\delta(x,t_2) \\
\le&\; \left\{u(x, s_\delta(x,t_1)) - {|t_1-s_\delta(x,t_1)|^2\over \delta}\right\} - \left\{u(x, s_\delta(x,t_1)) - {|t_2-s_\delta(x,t_1)|^2\over \delta}\right\} \\
=&\; \frac{1}{\delta}(t_1+t_2 - s_\delta(x,t_1)) (t_2 - t_1) \le \frac{3T +1}{\delta} |t_1 - t_2|. 
\end{aligned}\]
Interchanging the roles of $t_1$ and $t_2$ enables us to obtain a symmetric estimate. We thus get \eqref{time lip} with
$L_\delta=(3T+1)/\delta.$

\ul{\textit{Step 2. Choice of $\delta$ for comparison near t=0}}

Let us now proceed to the main part of the proof. 
Assume by contradiction that  $u-v$ takes a positive value somewhere in $Q$. 
Then, there exists $T>0$ large, $\mu>0$ such that 
\[ \max_{\ol{Q}_T} \left(u(x,t) - v(x,t) - \frac{1}{T-t} \right) > \mu. \]
Fix such $T>0$ and let $u_\delta$ be the sup-convolution of $u$ with this $T$, where $\delta>0$ is taken to satisfy \eqref{delta range}.

It follows from the monotonicity \eqref{remove_lipa} that there exists $(\hat{x}, \hat{t})\in \ol{Q}_T$ such that
\beq\label{contra cond}
\max_{\ol{Q}_T} \Psi_\delta =\Psi_\delta(\hat{x}, \hat{t})> \mu
\eeq
for any $\delta > 0$, where 
\[
\Psi_\delta(x, t)=u_\delta(x, t)-v(x, t)-{1 \over T-t}. 
\]

By the upper semicontinuity of $\Psi_\delta$, we have  
\[
\limsup_{t\to 0} \sup_{x\in [-b, b]} \Psi_\delta (x, t)\leq \sup_{x\in [-b, b]}\Psi_\delta(x, 0)= \sup_{x\in [-b, b]} (u_\delta(x, 0)-v(x, 0))-{1 \over T}.
\]
In view of \eqref{remove lip4} and \eqref{initial comparison}, we can take $\delta_1>0$ sufficiently small to deduce that
\[
\sup_{x\in [-b, b]}\Psi_{\delta_1}(x, 0) \leq \sup_{x\in [-b, b]} (u(x, 0)-v(x, 0))+\omega(\delta_1)-{1 \over T}<-{1 \over 2T}.
\]
Then there exists $t_1 > 0$ such that 
\[ \sup_{ [-b,b] \times [0, t_1]} \Psi_{\delta_1} \le - \frac{1}{4T}. \]
Taking \eqref{delta range} into consideration, we choose $\delta>0$ such that 
\beq\label{delta range2}
\delta<\delta_1, \quad C_T\sqrt{\delta}<\min\left\{1,\ T,\ {t_1\over 2}\right\}, 
\eeq
where $C_T>0$ is a constant given in \eqref{comparison constant1}. We thus obtain by \eqref{remove_lipa}
%. By using the monotonicity \eqref{remove_lipa}, we obtain
\begin{equation}\label{remove_lipd}
\sup_{[-b,b] \times [0, \ 2C_T\sqrt{\delta}]} \Psi_{\delta}\le - \frac{1}{4T}.
\end{equation}
Let us fix $\delta >0$ as in \eqref{delta range2} and discuss two cases. 

\ul{\textit{Step 3. Scaling argument and comparison near boundary}}

Case 1. Suppose that there exists a maximizer $(\hat{x}, \hat{t})$ of $\Psi_\delta$ satisfying $\hat{x}=b$ or $\hat{x}=-b$. % satisfies \eqref{contra cond}.
Without loss of generality, we may assume that $\hat{x}=b$; the case $\hat{x}=-b$ is essentially the same. 

By applying \eqref{contra cond} with $\hat{x} = b$ and \eqref{space conti delta}, we then obtain 
\begin{equation}\label{range lambda0}
\max_{\ol{Q}_T} \Psi_{\delta, \lambda}\geq {\mu\over 2}>0
\end{equation}
for $\lambda\in (0, 1)$ close to $1$, where
\[
\Psi_{\delta, \lambda}(x, t)=u_\delta(\lambda x, t)- \lambda v(x, t)-{1 \over T-t}
\]
 We take $\lambda\in (0, 1)$ sufficiently close to $1$ to also satisfy
\beq\label{range lambda1}
\left({1\over \lambda} - 1\right)L_\delta \leq {1 \over 2T^2}
\eeq
and 
\beq\label{range lambda2}
\min_{|s| \le L_\delta} \left\{\lambda f^{-1}(s) - f^{-1}\left(s - \dfrac{1}{2T^2}\right)\right\}>0. 
\eeq
Recall that $L_\delta>0$ appeared in \eqref{time lip}. 
Also, by the semicontinuity of $u_\delta$ and $v$, it follows from \eqref{remove_lipd} that 
\[ \limsup_{\lambda \to 1} \sup_{[-b,b] \times [0, 2C\sqrt{\delta}]} \Psi_{\delta, \lambda} \le \sup_{[-b,b] \times [0, 2C\sqrt{\delta}]} \Psi_{\delta} \le - \frac{1}{4T}. \]
This amounts to saying that
\begin{equation}\label{range lambda3}
\sup_{[-b,b] \times [0, 2C\sqrt{\delta}]} \Psi_{\delta, \lambda} \le 0
\end{equation}
when $\lambda$ is chosen to be close to $1$. 
We fix $\lambda\in (0, 1)$ that satisfies \eqref{range lambda0}--\eqref{range lambda3}. 

Let us now double the space variables. Consider, for any $\vep>0$ small, 
\[
\Phi_{\lambda, \vep}(x, y, t):=u_\delta(x, t)-\lambda v(y, t)-{|x- \lambda y|^2\over \vep}-{1 \over T-t}
\]
for $(x, y)\in [-b, b]$ and $t\in [0, T)$.
Since 
\beq\label{remove lip5}
\Phi_{\lambda, \vep}(\lambda \ol{x}, \ol{x}, \ol{t})=\Psi_{\delta, \lambda}(\ol{x}, \ol{t})=\max_{\ol{Q}_T}\Psi_{\delta, \lambda}>0,
\eeq
$\Phi_{\lambda, \vep}$ attains a positive maximum at some $(x_\vep, y_\vep, t_\vep)\in [-b, b]^2\times [0, T)$. 
The relation \eqref{remove lip5} also enables us to deduce that 
\[
{|x_\vep- \lambda y_\vep|^2\over \vep}\leq u_\delta(x_\vep, t_\vep)-u_\delta(\lambda\ol{x}, \ol{t})-\lambda v(y_\vep, t_\vep)+\lambda v(\ol{x}, \ol{t})- {1 \over T-\ol{t}},
\]
which implies the existence of $(x_0, t_0)\in \ol{Q}_T$ such that along a subsequence
\[
x_\vep,  \lambda y_\vep\to x_0, \quad t_\vep\to t_0
\]
as $\vep\to 0$. Hereafter we still index the converging subsequence by $\vep$ for convenience of notation. 

%Note that $x_0 \in [-\lambda b, \lambda b]$ since $\lambda y_\varepsilon \in [-\lambda b, \lambda b]$ and it converges to $x_0$.  
By the semicontinuity of $u_\delta$ and $v$, we can use \eqref{remove lip5} to get 
\[
\Psi_{\delta, \lambda}\left({x_0\over \lambda}, t_0\right) \geq \limsup_{\vep\to 0}\Phi_{\lambda, \vep}(x_\vep, y_\vep, t_\vep)\geq \max_{\ol{Q}_T} \Psi_{\delta, \lambda} >0,
\]
which yields $t_0\geq 2C\sqrt{\delta}$ from \eqref{range lambda3} and thus $t_\vep>C\sqrt{\delta}$ when $\vep>0$ is sufficiently small. 

 %By the semicontinuity of $u$ and $v$, we thus have 
%\[
%u(x_0, t_0)-\lambda v\left({x_0\over \lambda}, {t_0}\right)\geq  {\sigma\over T-t_0},
%\]
%which, in view of \eqref{initial cp}, yields $t_0>0$. 

Moreover, we have $x_\vep\in (-b, b)$ when $\vep>0$ is sufficiently small since $x_\vep$ converges to $x_0 \in [-\lambda b, \lambda b]$ as $\vep \to 0$. In addition, since, by definition, $v(x, t)$ cannot be tested at $x=\pm b$, we have $y_\vep\in (-b, b)$. We fix such $\vep>0$.

We next apply the Crandall-Ishii lemma (cf. \cite[Theorem 8.3]{CIL}) to find, for any $\sigma>0$ small, 
$(\tau_1, p_1,  z_1)\in \ol{P}^{2, +} u_\delta(x_\vep, t_\vep)$ and $(\tau_2, p_2, z_2)\in \ol{P}^{2, -} v(y_\vep, t_\vep)$ satisfying 
\beq\label{semijet00}
 \tau_1={ \lambda \tau_2}+{1\over (T-t_\vep)^2},
\eeq
\beq\label{semijet1}
p_1=p_2={2(x_\vep-\lambda y_\vep)\over \vep}, 
\eeq
and 
\beq \label{semijet2}
\begin{pmatrix}
z_1 & 0\\
0 & -\lambda z_2
\end{pmatrix} \le
\dfrac{2}{\vep}\begin{pmatrix} 1 & -\lambda \\ -\lambda & \lambda^2 \end{pmatrix}+{4\sigma\over \vep^2}\begin{pmatrix} 1 & -\lambda \\ -\lambda & \lambda^2 \end{pmatrix}^2. 
\eeq
%Suppose in the sequel that the Lipschitz continuity of $u$ as in \eqref{time lip} holds. (Similar estimates can be obtained if \eqref{time lip2} holds instead.) Then using \eqref{semijet00}, we have 
The time Lipschitz regularity of $u_\delta$ as in \eqref{time lip} yields 
\begin{equation}\label{bdd-jet-t}
|\tau_1|\leq L_\delta,
\end{equation}
and therefore by \eqref{range lambda1} and \eqref{semijet00} again
\beq\label{semijet0}
\tau_1-\tau_2\geq -(\lambda^{-1} - 1)L_\delta+{1\over \lambda(T-t_\vep)^2}\geq {1 \over 2T^2}.
\eeq
It follows from the monotonicity of $f^{-1}$, \eqref{range lambda2}, \eqref{bdd-jet-t} and \eqref{semijet0} that 
\begin{equation} \label{time contra}
\lambda f^{-1}(\tau_1)-f^{-1}(\tau_2) \ge \min_{|s| \le L_\delta} \left\{\lambda f^{-1}(s) - f^{-1}\left(s - \dfrac{1}{2T^2}\right)\right\} > 0. 
\end{equation}
Multiplying both sides of \eqref{semijet2} by the vector $(\lambda, 1)$ from left and from right, we deduce that 
\beq\label{semijet3}
\lambda z_1\leq z_2.
\eeq
%for $\rho>0$ small. %The constant implied in the error term above depends also on $\vep>0$, which has been fixed. 

We then adopt the definition of viscosity sub- and supersolutions for $u_\delta$ and $v$ respectively to deduce that 
\[
\tau_1+F(p_1, z_1)\leq 0
\]
and
\[
\tau_2+F(p_2, z_2)\geq 0,
\]
which yields by the assumption (A1), \eqref{semijet1} and \eqref{semijet3}
\[
%\begin{equation}\label{time estimate}
\lambda f^{-1}(\tau_1)-f^{-1}(\tau_2) \leq \lambda g(p_1)z_1 - g(p_2)z_2 \le 0. 
%\end{equation}
\]
This is a contradiction to \eqref{time contra}. 

\ul{\textit{Step 4. Comparison away from the boundary}}

Case 2. Suppose that for any $\hat{t}>0$ neither $\hat{x}=b$ nor $\hat{x}=-b$ satisfies \eqref{contra cond}. 
In other words, we have 
\beq\label{contra cond2}
\sup_{(x, t)\in \{\pm b\}\times [0, T)} \left\{u_\delta(x, t)-v(x, t)-{1\over T-t}\right\}\leq 0.
\eeq
We essentially get a comparison relation like the Dirichlet boundary condition. 
In this case, for any $\vep>0$, we take the standard auxiliary function
\[
\Phi_{\vep}(x, y, t):=u_\delta(x, t)-v(y, t)-{|x- y|^2\over \vep}-{1 \over T-t}
\]
for $x, y\in \ol{Q}_T$ and $t \in [0, T)$. It is clear that $\Phi_\vep$ attains a maximum over $\ol{Q}_T^2$ at $(x_\vep', y_\vep', t_\vep')$ with 
\[
\Phi_\vep(x_\vep', y_\vep', t_\vep')\geq \max_{\ol{Q}_T} \Psi_{\delta}.
\]
 Then using a similar argument as shown in Case 1, we can take a subsequence such that 
$x_\vep', y_\vep'\to x_0$ and $t_\vep' \to t_0$ as $\vep\to 0$ with $(x_0, t_0)\in \ol{Q}_T$. 
It follows from the semicontinuity of $u$ and $v$ again that 
\[
u_\delta (x_0, t_0)-v(x_0, t_0)-{1 \over T-t_0}\geq \limsup_{\vep\to 0} \Phi_\vep(x_\vep, y_\vep, t_\vep)\geq  \max_{\ol{Q}_T} \Psi > 0,
\]
which implies that $(x_0, t_0)$ is a maximizer of $\Psi$ in $\ol{Q}_T$. Then by \eqref{remove_lipd} and \eqref{contra cond2}, we have $t_0\geq 2C\sqrt{\delta}$ and $x_0\in (-b, b)$.
Hence, $x_\vep', y_\vep'\in (-b, b)$ and $t_\vep'>C\sqrt{\delta}$ when $\vep>0$ is sufficiently small. We fix such $\vep>0$.

The argument below is almost the same as that in Case 1. We  apply the Crandall-Ishii lemma again to get $(\tau_1', p_1',  z_1')\in \ol{P}^{2, +} u_\delta(x_\vep', t_\vep')$ and $(\tau_2', p_2', z_2')\in \ol{P}^{2, -} v(y_\vep', s_\vep')$ for any $\sigma>0$ small such that
\beq\label{semijet0'}
\tau_1'-\tau_2'={1\over (T-t_\vep')^2}\geq {1\over T^2}, 
\eeq
\beq\label{semijet1'}
p_1'=p_2'={2(x_\vep'-y_\vep')\over \vep}, 
\eeq
and 
\beq \label{semijet2'}
\begin{pmatrix}
z_1' & 0\\
0 & - z_2'
\end{pmatrix}
\leq
\dfrac{2}{\vep}\begin{pmatrix} 1 & -1 \\ -1 & 1 \end{pmatrix}+{4\sigma\over \vep^2}\begin{pmatrix} 1 & -1 \\ -1 & 1 \end{pmatrix}^2.
\eeq
The last inequality implies that $z_1'\leq z_2'$. 

Since \eqref{time lip} implies that $|\tau_1'|\leq L_\delta$, by \eqref{semijet0'} together with the continuity and the monotonicity of $f^{-1}$, we have
\beq\label{contra cond3}
f^{-1}(\tau_1')-f^{-1}(\tau_2') \geq \min_{|s| \le L_\delta} \left\{f^{-1}(s) - f^{-1}\left(s - \dfrac{1}{T^2}\right) \right\} > 0.
\eeq
We next use the definition of sub- and supersolutions again to obtain 
\[
\tau_1'+ F(p_1', z_1')\leq 0, \quad \tau_2'+ F(p_2', z_2')\geq 0.
\]
Applying \eqref{semijet1'} and \eqref{semijet2'}, we are led to
\[
%\begin{equation}\label{time estimate'}
f^{-1}(\tau_1')-f^{-1}(\tau_2') \leq g(p_1')z_1'-g(p_2')z_2'\leq 0,
%\end{equation}
\]
which yields a contradiction to \eqref{contra cond3}. 
\end{proof}
We remark that the scaling technique employed in Step 3 of the proof above is inspired by \cite{So}. As an immediate consequence of Theorem \ref{thm cp}, 
it is clear that
\beq\label{initial dependence}
\sup_{\ol{Q}}|u-v| \leq \max_{[-b, b]} |u(\cdot, 0)-v(\cdot, 0)|
\eeq
if both $u$ and $v$ are continuous viscosity solutions of \eqref{flow eq}.  
In particular, there exists at most one continuous viscosity solution $u$ of \eqref{flow eq} and \eqref{initial}.

\section{Existence of solutions}\label{sec:existence}

In this section, we discuss existence of solutions of \eqref{flow eq}--\eqref{initial}, for which  the range of $\alpha$ in \eqref{exponent alpha} plays a key role.  
Under the assumptions (A1)--(A2), we first use Perron's method to show existence of viscosity solutions of \eqref{flow eq}--\eqref{initial} with a class of initial values.% which have been stated in Theorem \ref{thm existence perron}.  
%We need \eqref{initial cond1} and \eqref{initial cond2} to get the Lipschitz continuity in time so that the uniqueness of such solutions can be guaranteed by Theorem \ref{thm cp}. 

We also study the case when a different range of decay exponent $\alpha$ is taken. Assuming $\alpha\leq 2$ as in (A2') instead of (A2), we show nonexistence of spatially bounded solutions of \eqref{flow eq}--\eqref{initial} by constructing rapidly evolving subsolutions. If the boundedness condition on the notion of solutions is removed, then one can see that any such solution must instantaneously blow up near the boundary.

\subsection{Existence for $\alpha> 2$}

%Our existence proof consists of two steps. For every $R> \max\{|L_+|, |L_-|\}$, we first use Perron's method together with \eqref{initial cond1} and \eqref{initial cond2} to construct a viscosity solution $u$ of the truncated problem \eqref{truncated eq}--\eqref{truncated bdry} with \eqref{initial}. 
%We then show that such a solution $u$ is indeed Lipschitz in time with the Lipschitz constant $\max\{|L_+|, |L_-|\}$, which allows us to remove the truncation and justify the singular boundary condition. 
%As a consequence, we see that $u$ is the unique solution of \eqref{flow eq}--\eqref{initial}. 

In this section we give a proof of Theorem \ref{thm existence perron}. Let us first show the existence result when $u_0$ satisfies the stronger regularity condition (A3).

\begin{prop}[Existence of solutions with regular initial data]\label{prop existence truncated}
Let $b>0$. 
Assume that functions $f,g$ and $u_0$ satisfy (A1), (A2) and (A3), respectively.  
Then there exists a unique viscosity solution of \eqref{flow eq}--\eqref{initial}.
\end{prop}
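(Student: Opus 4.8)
The plan is to get uniqueness from the comparison principle of Theorem~\ref{thm cp} and existence from Perron's method, the point of assumption (A3) being that it hands us a matching pair of barriers. For uniqueness, note that two solutions $u,v\in C(\ol{Q})$ are in particular continuous up to $\{x=\pm b\}$, hence locally bounded in the sense of \eqref{locally_bdd} and accessible in the sense of \eqref{space conti}, and $u(\cdot,0)=u_0=v(\cdot,0)$; applying Theorem~\ref{thm cp} twice with the roles of $u$ and $v$ interchanged gives $u\equiv v$ (this is \eqref{initial dependence}). So the work is in the existence.

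First I would build the barriers $\ol{u}(x,t):=u_0(x)+L_+t$ and $\ul{u}(x,t):=u_0(x)+L_-t$. Both lie in $C(\ol{Q})$, satisfy $\ol{u}(\cdot,0)=\ul{u}(\cdot,0)=u_0$, and $\ul{u}\le\ol{u}$. By (A1) and (A2), the bound \eqref{initial cond1} reads $f^{-1}(L_-)/g((u_0)_x)\le (u_0)_{xx}\le f^{-1}(L_+)/g((u_0)_x)$ a.e.\ in $(-b,b)$; mollifying $u_0$ in $x$ and using the continuity of $f$, $g$ and $(u_0)_x$ together with positivity of $g$, one sees that $u_0^\varepsilon+(L_++o(1))t$ is a classical supersolution of \eqref{flow eq} on compact subsets of $Q$ and $u_0^\varepsilon+(L_-+o(1))t$ a classical subsolution, so by stability of viscosity solutions under local uniform limits $\ol{u}$ is a viscosity supersolution and $\ul{u}$ a viscosity subsolution of \eqref{flow eq} in $Q$. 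For the singular boundary condition \eqref{bdry}: nothing is required of the subsolution $\ul{u}$, while $\ol{u}$ satisfies condition (ii) of Definition~\ref{def singular}, because a $C^2$ function touching $\ol{u}$ from below at $(b,t)$ (resp.\ $(-b,t)$) would have to carry the slope $(u_0)_x(x)\to+\infty$ as $x\to b^-$ (resp.\ $(u_0)_x(x)\to-\infty$ as $x\to -b^+$), which \eqref{initial cond2} forbids.

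Next I would run Perron's method on the class $\mathcal{S}$ of subsolutions $w$ of \eqref{flow eq}--\eqref{bdry} with $\ul{u}\le w\le\ol{u}$ on $\ol{Q}$ that are in addition accessible on $\{x=\pm b\}$; this class contains $\ul{u}$, and for $u:=\sup_{w\in\mathcal{S}}w$ one has $\ul{u}\le u\le\ol{u}$, hence $u^*(\cdot,0)=u_0=u_*(\cdot,0)$. The standard Perron lemma gives that $u^*$ is a subsolution of \eqref{flow eq} (no boundary requirement being needed), and $u^*$ is accessible on $\{x=\pm b\}$ precisely because every member of $\mathcal{S}$ is. For the lower envelope $u_*$: the interior supersolution property follows from the usual bump construction, the only extra remark being that if $u_*$ failed the supersolution inequality at an interior $(x_0,t_0)$ then necessarily $u_*(x_0,t_0)<\ol{u}(x_0,t_0)$ (otherwise $\ol{u}$, a supersolution, would be tested from below there by the same test function), which leaves room to raise $u$ slightly near $(x_0,t_0)$ below $\ol{u}$ while staying in $\mathcal{S}$ (re-defining the boundary trace of the bumped function as the interior $\limsup$ keeps it accessible), contradicting maximality; condition (ii) for $u_*$ is argued in the same spirit, now using that subsolutions carry no constraint at $\{x=\pm b\}$ and that $\ol{u}$ itself satisfies (ii). Finally Theorem~\ref{thm cp}, applied with the subsolution $u^*$ (accessible, and locally bounded since $\ul{u}\le u^*\le\ol{u}$) and the supersolution $u_*$, and with $u^*(\cdot,0)=u_0=u_*(\cdot,0)$, gives $u^*\le u_*$; since trivially $u_*\le u\le u^*$, the function $u=u^*=u_*\in C(\ol{Q})$ is the desired solution.

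The hard part will be the boundary behaviour in the Perron step: showing that the lower envelope $u_*$ cannot be touched from below at $x=\pm b$ by a $C^2$ function and that $u^*$ is accessible there, so that Theorem~\ref{thm cp} can legitimately be invoked. This forces one to adapt the bump and accessibility arguments to the singular Neumann condition, in the spirit of \cite{LaL2} and \cite{So}; once the barriers are in place, the interior part of Perron's method and the verification of \eqref{initial cond1}--\eqref{initial cond2} via mollification are routine.
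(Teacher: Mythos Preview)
Your overall architecture---barriers $u_0+L_\pm t$, Perron's method, then Theorem~\ref{thm cp}---matches the paper. The substantive difference is that you run Perron as a \emph{supremum over subsolutions}, whereas the paper takes the \emph{infimum over supersolutions} (see \eqref{perron def}). This is not cosmetic: it is exactly what makes the singular boundary verification for $u_*$ clean in the paper and awkward in your version.

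In the paper's setup, each member of the defining class is a supersolution and therefore already satisfies condition (ii) of Definition~\ref{def singular}. So when one tests $u_*$ from below at $(b,t_0)$ by $\phi$, one perturbs to $\phi_\delta$ as in \eqref{test perturb}, picks approximating supersolutions $v_k$, and the local minimizer of $v_k-\phi_\delta$ is forced into the interior \emph{because $v_k$ itself cannot be tested at the boundary}; the interior viscosity inequality then gives a contradiction as $\delta\to 0$. In your setup the members of $\mathcal S$ are subsolutions with no boundary constraint, so nothing pushes the bump into the interior, and ``argued in the same spirit'' does not go through as stated: the interior bump relies on a \emph{strict failure} of an inequality, while condition (ii) has no inequality to violate. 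Your argument can be salvaged, but it needs an extra idea you have not written: replace the test $\phi$ by $\phi_\delta=\phi+\delta(x-b)+\tfrac{1}{\delta}(x-b)^2-\delta(t-t_0)^2$ and observe that, by (A1)--(A2), $F\big((\phi_\delta)_x,(\phi_\delta)_{xx}\big)\to -\infty$ as $\delta\to 0$, so $\phi_\delta$ is itself a strict subsolution near $(b,t_0)$; only then does the boundary bump $\max\{u^*,\phi_\delta+\varepsilon\}$ become a legitimate competitor in $\mathcal S$. The case $u_*(b,t_0)=\ol u(b,t_0)$ (where you invoke (ii) for $\ol u$) is fine.

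Two smaller points. First, your claim that $u^*$ is accessible ``precisely because every member of $\mathcal S$ is'' is not correct: accessibility is not preserved under pointwise suprema followed by upper envelopes. The paper handles this by brute force, redefining the boundary trace as in \eqref{bdry value change}; you should do the same for $u^*$ (you already invoke this trick for the bumped function). Second, the mollification detour to check that $u_0+L_\pm t$ are viscosity barriers is unnecessary: since $u_0\in C^{1,1}((-b,b))$, any $C^2$ function touching $u_0$ from below at $x_0$ has $\phi_x(x_0)=(u_0)_x(x_0)$ and $\phi_{xx}(x_0)\le f^{-1}(L_+)/g((u_0)_x(x_0))$ directly from \eqref{initial cond1} and the continuity of $(u_0)_x$, which is all you need.
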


\begin{proof}
For any $u_0$ satisfying \eqref{initial cond1} and \eqref{initial cond2}, it is not difficult to see that 
\beq\label{initial barriers}
v_\pm(x, t)=u_0(x) + L_\pm t\quad (x, t)\in \ol{Q}
\eeq
are respectively a super- and a subsolution of \eqref{flow eq} and \eqref{bdry}; concerning the boundary behavior of $v_+$, it satisfies \eqref{bdry}, since it cannot be tested from below by $C^2$ functions at $\pm b$. Moreover, $v_\pm$ are continuous in $\ol{Q}$ and satisfy \eqref{initial}. 

We now adopt Perron's method (cf. \cite{I1, CIL, ISa}) to show the existence of solutions of \eqref{flow eq} and \eqref{bdry}. Put, for any $(x, t)\in \ol{Q}$, 
\beq\label{perron def}
u(x, t)=\inf\left\{v(x, t):  v_-\leq v \leq v_+ \ \text{and $v$ is a supersolution of \eqref{flow eq} and \eqref{bdry}}\right\}.
\eeq
% \re{to prove that $u$ is a solution of \eqref{flow eq} and \eqref{bdry}\fbox{to revise?}}. To this end, 
We take the upper and lower semicontinuous envelopes of $u$: %by taking
\[
\begin{aligned}
&u^\ast(x, t)=\lim_{\delta\to 0+}\sup\left\{u(y, s): (y, s)\in \ol{Q}, \ |x-y|+|t-s|\leq \delta\right\},\\
&u_\ast(x, t)=\lim_{\delta\to 0+}\inf\left\{u(y, s): (y, s)\in \ol{Q}, \ |x-y|+|t-s|\leq \delta\right\}
\end{aligned}
\]
for all $(x, t)\in \ol{Q}$.

One can follow the classical arguments to verify the subsolution property of $u^\ast$ and the supersolution property of $u_\ast$  in the interior. 
Let us prove in detail that $u_\ast$ satisfies the boundary condition \eqref{bdry}.

Suppose by contradiction that there exist $t_0>0$ and $\phi\in C^2(\ol{Q})$ such that $u_\ast-\phi$ attains a minimum in $\ol{Q}$ at the point $(b, t_0)$. (The case for tests at the other endpoint can be similarly handled.) 
Let $\delta > 0$ be sufficiently small so that $u_\ast-\phi_\delta$
attains a strict minimum at $(b, t_0)$ over a neighborhood $\mathcal{N}$ in $\ol{Q}$, where
\beq\label{test perturb}
\phi_\delta(x, t)=\phi(x, t)+\delta(x-b)+{1\over \delta}(x-b)^2-\delta(t-t_0)^2
\eeq
for $(x, t)\in \ol{Q}$. 

By definition of $u_\ast$, we can find a sequence $(y_k, s_k)\in \ol{Q}$ such that $u(y_k, s_k)\to u_\ast (b, t_0)$ as $k\to \infty$. By \eqref{perron def}, we can further take a supersolution $v_k$ of \eqref{flow eq} and \eqref{bdry} for each $k$ such that 
\[
u(y_k, s_k)\leq v_k(y_k, s_k)\leq u(y_k, s_k)+{1\over k}.
\]
%and of supersolutions $v_k$ of \eqref{flow eq} with \eqref{bdry} such that 
It follows immediately that $v_k(y_k, s_k)\to u_\ast(b, t_0)$ as $k\to \infty$. 

Let us take a minimizer $(x_k, t_k)$ of $v_k-\phi_\delta$ over a compact subset $\mathcal{K}$ of $\mathcal{N}$ containing $(b, t_0)$. 
It follows that $(x_k, t_k)\to (b, t_0)$ as $k\to \infty$, for otherwise a different limit would admit another minimizer of $u_\ast-\phi_\delta$ in $\mathcal{K}$, which contradicts the strict minimality in $\mathcal{N}$ at $(b, t_0)$.
Hence, $v_k-\phi_\delta$ attains a local minimum at $(x_k, t_k)$. 

Since $v_k$ satisfies the singular boundary condition \eqref{bdry}, we have $x_k< b$ for all $k$. 
We then apply the definition of supersolutions of \eqref{flow eq} for $v_k$ to get
\[
\phi_t(x_k, t_k)-2\delta(t_k-t_0)+F\left(\phi_x(x_k, t_k)+\delta+{2\over \delta}(x_k-b), \ \phi_{xx}(x_k, t_k)+{2\over \delta}\right)\geq 0.
\]
Sending $k\to \infty$, we have 
\[
\phi_t(b, t_0)+F\left(\phi_x(b, t_0)+\delta,\  \phi_{xx}(b, t_0)+{2\over \delta}\right)\geq 0.
\]
Letting $\delta>0$ small, we end up with a contradiction, since it follows from (A1) and (A2) that
\[
F\left(\phi_x(b, t_0)+\delta,\  \phi_{xx}(b, t_0)+{2\over \delta}\right)\to -\infty\quad \text{as $\delta\to 0$}.
\]
We have thus proved that $u_\ast$ is a supersolution of \eqref{flow eq} and \eqref{bdry}.

We also need to make sure that the boundary values of $u^\ast$ satisfy the accessibility condition \eqref{space conti}. To this purpose, we simply set
\beq\label{bdry value change}
\tilde{u}(x, t)=\begin{cases}
\displaystyle u^\ast(x, t) &\text{if $x\in (-b, b),\ t\geq 0$,}\\
\displaystyle \limsup_{\substack{(y, s)\to (x, t)\\ (y, s)\neq (x, t)}}u^\ast(y, s) &\text{if $x=\pm b,\ t\geq 0$.}
\end{cases}
\eeq
Then $\tilde{u}$ is sill upper semicontinuous in $\ol{Q}$ and is a subsolution of \eqref{flow eq} satisfying the accessibility condition. Also, we have $u_\ast\leq \tilde{u}$ in $\ol{Q}$. 

In view of \eqref{perron def}, it is not difficult to see that 
\[
\tilde{u}(\cdot, 0)=u_\ast(\cdot, 0)=u_0 \quad \text{in $[-b, b]$.}
\]
We thus can apply Theorem \ref{thm cp} to deduce that $\tilde{u}\leq u_\ast$ in $[-b, b]\times [0, \infty)$, which immediately yields the existence of a unique continuous solution of \eqref{flow eq} and \eqref{bdry} satisfying \eqref{initial}. 
\end{proof}

In order to obtain the existence of solutions for more general initial values, we establish a stability result for \eqref{flow eq}--\eqref{bdry} by applying the proof of Proposition \ref{prop existence truncated}. 
Let $u_\vep: \ol{Q}\to \R$ be a family of functions uniformly bounded in $\ol{Q}_T$ for every $T>0$. We define the relaxed half limits of $u_\vep$ as follows. 
For any $(x, t)\in \ol{Q}$, take 
\beq\label{stability limits}
\begin{aligned}
\ol{u}(x, t)&=\limsups_{\vep\to 0} u_\vep(x, t)=\lim_{\delta\to 0} \sup\{u_\vep(y, s): (y, s)\in \ol{Q},\ |x-y|+|t-s|+\vep\leq \delta\},\\
\ul{u}(x, t)&=\liminfs_{\vep\to 0} u_\vep(x, t)=\lim_{\delta\to 0} \inf\{u_\vep(y, s): (y, s)\in \ol{Q},\ |x-y|+|t-s|+\vep\leq \delta\}.
\end{aligned}
\eeq
\begin{thm}[Stability]\label{thm stability}
Let $u_\vep$ a viscosity solution of \eqref{flow eq} and \eqref{bdry} for each $\vep>0$. Assume that $u_\vep$ are uniformly bounded in $\ol{Q}_T$ for every $T>0$. 
Then $\ol{u}$ and $\ul{u}$ given by \eqref{stability limits} are respectively a sub- and a supersolution of \eqref{flow eq} satisfying \eqref{bdry}. In particular, if $u_\vep\to u$ converges uniformly in $\ol{Q}$, then $u\in C(\ol{Q})$ is a solution of \eqref{flow eq} and \eqref{bdry}. 
\end{thm}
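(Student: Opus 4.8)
The plan is to prove the stability theorem by the standard relaxed-limits (half-relaxed limits) method of Barles--Perthame, adapted to the singular Neumann condition using precisely the test-function perturbation technique already developed in the proof of Proposition~\ref{prop existence truncated}. First I would record the elementary fact that $\ol u$ is upper semicontinuous, $\ul u$ is lower semicontinuous, and $\ul u \le \ol u$ on $\ol Q$, all of which follow directly from the uniform local boundedness of $u_\vep$ and the definitions in \eqref{stability limits}. The interior subsolution property of $\ol u$ and interior supersolution property of $\ul u$ are then obtained by the classical argument: if $\ol u - \phi$ has a strict local maximum at an interior point $(x_0,t_0)\in Q$ with $\phi\in C^2(\ol Q)$, one extracts along a subsequence $\vep_k\to 0$ a sequence of local maximizers $(x_k,t_k)$ of $u_{\vep_k}-\phi$ with $(x_k,t_k)\to (x_0,t_0)$ and $u_{\vep_k}(x_k,t_k)\to \ol u(x_0,t_0)$, applies the subsolution inequality $\phi_t(x_k,t_k)+F(\phi_x(x_k,t_k),\phi_{xx}(x_k,t_k))\le 0$, and passes to the limit using continuity of $F$ (i.e.\ of $f$ and $g$). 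The supersolution inequality for $\ul u$ in the interior is symmetric.

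The substantive point is the boundary behavior. For $\ol u$ there is nothing to check: Definition~\ref{def singular} imposes no condition on subsolutions at $x=\pm b$. For $\ul u$ we must verify condition (ii) of Definition~\ref{def singular}, i.e.\ that $\ul u - \phi$ cannot attain a local minimum at a boundary point $(b,t_0)$ (or $(-b,t_0)$) for any $\phi\in C^2(\ol Q)$. Here I would argue by contradiction exactly as in the Perron proof: assuming such a minimum exists, replace $\phi$ by the perturbed test function
\[
\phi_\delta(x,t)=\phi(x,t)+\delta(x-b)+\tfrac{1}{\delta}(x-b)^2-\delta(t-t_0)^2,
\]
so that $\ul u - \phi_\delta$ has a \emph{strict} local minimum at $(b,t_0)$ over some neighborhood $\mathcal N$. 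By the definition of $\ul u$, choose $\vep_k\to 0$ and points converging to $(b,t_0)$ realizing the lower limit; then local minimizers $(x_k,t_k)$ of $u_{\vep_k}-\phi_\delta$ over a compact subset of $\mathcal N$ converge to $(b,t_0)$. Since each $u_{\vep_k}$ is a supersolution of \eqref{flow eq}--\eqref{bdry}, it satisfies the singular boundary condition, so $x_k<b$; hence the interior supersolution inequality applies at $(x_k,t_k)$, giving
\[
\phi_t(x_k,t_k)-2\delta(t_k-t_0)+F\!\left(\phi_x(x_k,t_k)+\delta+\tfrac{2}{\delta}(x_k-b),\ \phi_{xx}(x_k,t_k)+\tfrac{2}{\delta}\right)\ge 0.
\]
Letting $k\to\infty$ and then $\delta\to 0$ produces a contradiction because, by (A1)--(A2), $F(\phi_x(b,t_0)+\delta,\ \phi_{xx}(b,t_0)+\tfrac{2}{\delta})\to -\infty$ as $\delta\to 0$ — this is the same divergence exploited in Proposition~\ref{prop existence truncated}. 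The symmetric argument handles the endpoint $-b$.

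Finally, for the last assertion, if $u_\vep\to u$ uniformly on $\ol Q$ then $\ol u = \ul u = u$, so $u$ is simultaneously an upper semicontinuous subsolution and a lower semicontinuous supersolution of \eqref{flow eq}--\eqref{bdry}; uniform convergence of continuous (or at least u.s.c.\ and l.s.c.) functions to $u$ forces $u\in C(\ol Q)$, and hence $u$ is a solution in the sense of Definition~\ref{def singular}. The main obstacle is purely the boundary verification for $\ul u$: one must be careful that the minimizers $(x_k,t_k)$ genuinely land in the \emph{open} interval, which is guaranteed by the singular boundary property of each $u_{\vep_k}$ together with the coercive perturbation $\tfrac{1}{\delta}(x-b)^2$ pinning the minimizer near $x=b$ while the penalization term $\delta(x-b)$ and the blow-up of $F$ in the second slot do the rest. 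Everything else is a routine transcription of the Barles--Perthame stability argument.
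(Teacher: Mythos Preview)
Your proposal is correct and follows essentially the same approach as the paper's own proof: the paper likewise defers the interior sub-/supersolution verification to the standard Barles--Perthame stability argument and concentrates on showing that $\ul u$ cannot be tested from below at $(\pm b,t_0)$, using exactly the perturbed test function $\phi_\delta$ from Proposition~\ref{prop existence truncated}, the singular boundary condition of each $u_{\vep_k}$ to force $x_k\in(-b,b)$, and the divergence $F(\phi_x(b,t_0)+\delta,\,\phi_{xx}(b,t_0)+2/\delta)\to-\infty$ to reach a contradiction.
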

\begin{proof}
We omit the verification of sub- and supersolution properties in $Q$, since it follows from the standard stability theory (cf. \cite{CIL}).  In order to show the supersolution property of $\ul{u}$ on the boundary, we can use the same argument in the proof of Proposition \ref{prop existence truncated}. \\
\mbox{}\\
Suppose by contradiction that there exists $\phi\in C^2(\ol{Q})$ such that $\ul{u}-\phi$  attains a minimum at $(b, t_0)$ for some $t_0>0$. Following the proof of Proposition \ref{prop existence truncated}, we can find a sequence $u_\vep$ and $(x_\vep, t_\vep)\in \ol{Q}$, still indexed by $\vep$ for simplicity, such that $(x_\vep, t_\vep)\to (x_0, t_0)$ as $\vep\to 0$ and 
$u_\vep-\phi_\delta$ attains a local maximum at $(x_\vep, t_\vep)$, where $\phi_\delta$ is given by \eqref{test perturb}. 
It is clear that $x_\vep\neq b$ due to the singular boundary condition. We thus have 
\[
\phi_t(b, t_\vep)+F\left(\phi_x(x_\vep, t_\vep)+\delta+{2\over \delta}(x_\vep-b), \phi_{xx}(x_\vep, t_\vep)+{2\over \delta}\right)\geq 0.
\]
Letting $\vep\to 0$ , we obtain 
\[
\phi_t(b, t_0)+F\left(\phi_x(b, t_0)+\delta, \phi_{xx}(b, t_0)+{2\over \delta}\right)\geq 0,
\]
which leads to a contradiction if $\delta>0$ is taken sufficiently small. 
\end{proof}

Another important ingredient for our proof of the existence theorem is the following result on regularization of $u_0$. 

\begin{lem}[Regularization of initial data]\label{lem regularization}
Let $u_0 \in C([-b,b])$ be a continuous function. 
Then, for any $\vep>0$ small, there exist $L_\pm(\vep) \in \mathbb{R}$ and $\psi_\vep \in C^\infty((-b, b)) \cap C([-b, b])$ such that
\begin{align}
&\sup_{x\in [-b, b]}|\psi_\vep(x)-u_0(x)|\to 0 \quad \text{as $\vep\to 0$, } \label{psi-1} \\
&\lim_{x \to \pm b} (\psi_\vep)_x(x) = \pm \infty, \label{psi-2} \\
&L_-(\vep) \le F((\psi_\vep)_x, (\psi_\vep)_{xx}) \le L_+(\vep).\label{psi-3}
\end{align}
Furthermore, if $u_0$ is convex, then $\psi_\vep$ is strictly convex and $L_-(\vep) > 0$ for any $\vep>0$ small. 
\end{lem}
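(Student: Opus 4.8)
The plan is to write $\psi_\vep$ as the sum of a standard mollification $w_\vep$ of $u_0$ and a small, explicitly chosen singular correction $\vep\rho$: the mollification handles the uniform convergence \eqref{psi-1}, while $\rho$ is designed so as to produce the boundary derivative blow-up \eqref{psi-2} and, at the same time, keep the nonlinear quantity $g((\psi_\vep)_x)(\psi_\vep)_{xx}$ bounded, which is what \eqref{psi-3} amounts to. Concretely, I would first extend $u_0$ to a continuous function $\tilde u_0$ on $\R$ — and, when $u_0$ is convex, to a convex one, e.g.\ by extending affinely beyond $\pm b$ with the finite one-sided slopes of $u_0$ — and set $w_\vep := \tilde u_0 * \eta_\vep$ for a standard mollifier $\eta_\vep$. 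Then $w_\vep \to u_0$ uniformly on $[-b,b]$, the derivatives $w_\vep', w_\vep''$ are bounded on $[-b,b]$ for each fixed $\vep$, and $w_\vep$ is convex whenever $u_0$ is.

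The heart of the construction is the choice of exponent. Since (A2) forces $g(s)\sim C_\pm|s|^{-\alpha}$ as $s\to\pm\infty$ with $\alpha>2$, I would set $\theta := \tfrac{1}{\alpha-1}\in(0,1)$ and define
\[
\rho(x) := -(b-x)^{1-\theta} - (b+x)^{1-\theta}, \qquad x\in[-b,b],
\]
so that $\rho\in C^\infty((-b,b))\cap C([-b,b])$ is strictly convex, with $\rho_x(x)\to+\infty$ as $x\to b^-$ and $\rho_x(x)\to-\infty$ as $x\to -b^+$. Then I would put $\psi_\vep := w_\vep + \vep\rho$ on $[-b,b]$. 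Immediately $\psi_\vep\in C^\infty((-b,b))\cap C([-b,b])$; $\|\psi_\vep-u_0\|_{L^\infty([-b,b])}\le \|w_\vep-u_0\|_{L^\infty([-b,b])}+\vep\|\rho\|_{L^\infty([-b,b])}\to0$, giving \eqref{psi-1}; and since $w_\vep'$ is bounded while $\vep\rho_x(x)\to\pm\infty$ at $x=\pm b$, we get \eqref{psi-2}. This also makes $\psi_\vep$ eligible to play the role of $u_0$ in Proposition \ref{prop existence truncated} once \eqref{psi-3} is verified.

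For \eqref{psi-3} I would compute the boundary asymptotics. Near $x=b$, using that the $(b+x)$-terms and $w_\vep', w_\vep''$ stay bounded, $(\psi_\vep)_x\sim\vep(1-\theta)(b-x)^{-\theta}\to+\infty$ and $(\psi_\vep)_{xx}\sim\vep(1-\theta)\theta(b-x)^{-\theta-1}$, so by (A2),
\[
g\big((\psi_\vep)_x\big)(\psi_\vep)_{xx} \;\sim\; C_+\,\theta\,\big(\vep(1-\theta)\big)^{1-\alpha}(b-x)^{(\alpha-1)\theta-1} \;\longrightarrow\; C_+\,\theta\,\big(\vep(1-\theta)\big)^{1-\alpha},
\]
a finite positive number, because $(\alpha-1)\theta-1=0$ by the choice of $\theta$; the same computation at $x=-b$ yields the positive limit $C_-\,\theta\,(\vep(1-\theta))^{1-\alpha}$. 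Hence $x\mapsto g((\psi_\vep)_x)(\psi_\vep)_{xx}$ is continuous on $(-b,b)$ with finite limits at $\pm b$, so it is bounded on $(-b,b)$; by continuity of $f$, $F((\psi_\vep)_x,(\psi_\vep)_{xx})$ is bounded, and one takes $L_\pm(\vep)$ to be its supremum and infimum. For the convex refinement, $w_\vep''\ge0$ and $\vep\rho_{xx}>0$ give $(\psi_\vep)_{xx}>0$ on $(-b,b)$, so $\psi_\vep$ is strictly convex; moreover $g((\psi_\vep)_x)(\psi_\vep)_{xx}$ is then positive throughout $(-b,b)$ with strictly positive limits at $\pm b$, hence bounded below by a positive constant, which (combined with the monotonicity of $f$) yields the strict sign condition claimed in \eqref{psi-3} for convex $u_0$.

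The mollification and the uniform-convergence bookkeeping are routine; the one genuinely delicate point is the choice $\theta=1/(\alpha-1)$ and the check that the bounded lower-order contributions — from $w_\vep$ and from the cross term $(b+x)^{1-\theta}$ near $x=b$ — do not disturb the leading-order balance, which they do not since they are dominated by the $(b-x)^{-\theta}$ and $(b-x)^{-\theta-1}$ singularities. This is precisely where the hypothesis $\alpha>2$ is indispensable: one needs simultaneously $\theta<1$ (so that the correction $\vep\rho$ stays bounded, hence continuous up to the boundary) and $(\alpha-1)\theta\ge1$ (so that the nonlinear term does not blow up), and both can hold only when $\alpha-1>1$. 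This obstruction is consistent with the nonexistence phenomenon recorded in Theorem \ref{thm:blowup}.
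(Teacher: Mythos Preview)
Your approach is essentially the paper's own: the same singular correction with exponent $1-\theta=(\alpha-2)/(\alpha-1)$, added to a mollification of $u_0$, and the same boundary computation showing that $g((\psi_\vep)_x)(\psi_\vep)_{xx}$ has finite positive limits at $\pm b$.

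There is one small technical gap in the convex case. A convex $u_0\in C([-b,b])$ may have infinite one-sided derivatives at the endpoints (take $u_0(x)=-\sqrt{b^2-x^2}$); then no affine extension to $\R$ preserves convexity, so your $w_\vep$ need not be convex, and you lose $(\psi_\vep)_{xx}>0$ in the interior. The paper sidesteps this by dilating rather than extending: it sets $u_{0,\vep}(x):=u_0((1-\vep)x)$ on the enlarged interval $[-b/(1-\vep),\,b/(1-\vep)]$ and mollifies with a kernel of width smaller than $\vep b/(1-\vep)$, so that for $x\in[-b,b]$ the convolution only samples $u_{0,\vep}$ where it is defined and convex. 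With that replacement for the construction of $w_\vep$, your argument goes through verbatim; in particular your explicit global formula for $\rho$ is a slight simplification over the paper, which only prescribes $\hat\psi$ near the boundary and appeals to a smooth strictly convex interpolation in between.
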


\begin{proof}
We first argue without assuming the convexity of $u_0$. 
Let $\hat{\psi}(x)\in C([-b,b]) \cap C^\infty((-b,b))$ be a strictly convex function such that 
\[
\hat{\psi}(x)=-(|b|-|x|)^{\alpha-2\over \alpha-1}\qquad \text{for all $x\in [-b, -b+r]\cup [b-r, b]$}
\]
One can connect these two pieces smoothly with a smooth strictly convex graph to construct the entire $\hat{\psi}$ in $[-b, b]$. 
%(One can also construct $\hat{\psi}$ by rescaling the solution $W$ of \eqref{traveling eq} and \eqref{traveling bdry} to be introduced in detail in Section \ref{sec:stationary}.)
%By (A2), we have 
%\[
%\lim_{x\to \pm b} \left| F\left((\hat{\psi}_x(x),\ \hat{\psi}_{xx}(x)\right)\right|<\infty. 
%\]
For any $\vep>0$, set
\[
\psi_{\vep, 1}=\vep\hat{\psi}\quad \text{in $[-b, b]$}.
\]
Then $\psi_{\vep,1}\to 0$ uniformly on $[-b,b]$ as $\vep\to 0$ and, for each fixed $\vep>0$,  
\begin{equation}\label{approxi-psi-1}
\lim_{x \to \pm b} (\psi_{\vep,1})_x(x) =\pm \infty% \quad \lim_{x \to \pm b} |F((\psi_{\vep,1})_x, (\psi_{\vep,1})_{xx})| < \infty. 
\end{equation}
and we have by the assumption (A2) 
\begin{equation} \label{approxi-psi-I}
\begin{aligned}
\lim_{x \to b} F\left((\psi_{\vep,1})_x(x), (\psi_{\vep,1})_{xx}(x)\right) =&\; \lim_{x \to b} f\left(g\left(\frac{\vep(\alpha-2)}{\alpha-1} (b-x)^{-\frac{1}{\alpha-1}}\right)\frac{\vep(\alpha-2)}{(\alpha-1)^2}(b-x)^{-\frac{\alpha}{\alpha-1}}\right) \\
=&\; f\left(\dfrac{\vep^{1-\alpha}(\alpha-2)^{1-\alpha}}{(\alpha-1)^{2-\alpha}} C_+\right)>0. 
\end{aligned}\end{equation}
Similarly, at the other boundary point $x=-b$,  we have 
\begin{equation} \label{approxi-psi-II}
\lim_{x \to -b} F\left((\psi_{\vep,1})_x(x),(\psi_{\vep,1})_{xx}(x)\right) = f\left(\dfrac{\vep^{1-\alpha}(\alpha-2)^{1-\alpha}}{(\alpha-1)^{2-\alpha}} C_-\right)>0. 
\end{equation}
%Note that the limits in \eqref{approxi-psi-I} and \eqref{approxi-psi-II} are positive. 
Fix a nonnegative function $\rho \in C^\infty_c(\mathbb{R})$ with support contained in $[-1, 1]$ such that $\int_\mathbb{R} \rho \; dx = 1$. We next take a closed interval $I_\vep=[-b/(1-\vep), b/(1-\vep)]$ and
%Let $\delta \in C([0,\infty))$ be an increasing function with $\delta(0) = 0$. 
define $u_{0,\vep} \in C(I_\vep)$ and $\psi_{\vep, 2} \in C^\infty([-b,b])$ by 
\[ \begin{aligned}
&u_{0,\vep}(x) := u_0((1-\vep)x) \quad \text{for} \; \; x \in I_\vep, \\
&\psi_{\vep, 2}(x) := {1\over \sigma(\vep)}\int_{I_\vep} u_{0,\vep}(y) \cdot  \rho\left(\frac{x-y}{\sigma(\vep)}\right) \; dy \quad \text{for} \; \; x \in [-b,b], 
\end{aligned}\]
where we let $\sigma(\vep)<\vep b/(1-\vep)$ for any $\vep > 0$ small.
%Let ${\rm spt} \rho$ be the support of $\rho$ and choose $l > 0$ so that ${\rm spt} \rho \subset [-l,l]$. 
%We then have  
%\[ \frac{b}{1-\delta(\vep)} - b = \frac{b \delta(\vep)}{1-\delta(\vep)} \ge \vep l \]
%for any $\vep>0$ small, which yields 
%\begin{equation}\label{approxi-psi-III}
%{\rm spt} \rho\left(\frac{x-\cdot}{\vep}\right) \subset \left[-\frac{b}{1-\delta(\vep)}, \frac{b}{1-\delta(\vep)}\right] \quad \text{for} \; \; x \in [-b,b]. 
%\end{equation}
Then, by a standard argument for mollification, the continuity of $u_0$ and the definition of $u_{0,\vep}$ imply that $\psi_{\vep, 2}\to u_0$ uniformly on $[-b,b]$ as $\vep \to 0$. 
Thus, letting 
\[
%\beq\label{psi_epsilon}
 \psi_\vep(x) := \psi_{\vep,1}(x) + \psi_{\vep,2}(x) \quad \text{for} \; \; x \in [-b,b],
 %\eeq 
\]
we obtain \eqref{psi-1}. 
Furthermore, since $|(\psi_{\vep,2})_x|$ is bounded in $[-b, b]$, \eqref{approxi-psi-1} implies \eqref{psi-2}. 
Also, since $\psi_{\vep, 2} \in C^\infty([-b,b])$, 
%\begin{equation}\label{approxi-psi-2}
%\sup_{x \in [-b+\sigma, b-\delta]} |F((\psi_\vep)_x(x), (\psi_\vep)_{xx}(x))| \le C_{\sigma}. 
%\end{equation}
%Moreover, since 
the blow-up rates of $(\psi_\vep)_x$ and $(\psi_\vep)_{xx}$ at the boundary points $x=\pm b$ are the same as those of $(\psi_{\vep,1})_x$ and $(\psi_{\vep,1})_{xx}$ respectively. It follows from 
\eqref{approxi-psi-I} and \eqref{approxi-psi-II} that
\begin{equation}\label{approxi-psi-3}
0<\lim_{x \to \pm b} F\left((\psi_\vep)_x(x), (\psi_\vep)_{xx}(x)\right) = \lim_{x \to \pm b} F\left((\psi_{\vep,1})_x(x), (\psi_{\vep,1})_{xx}(x)\right) < \infty. 
\end{equation}
We therefore can find $L_\pm(\vep)\in \R$ such that  \eqref{psi-3} holds. %from \eqref{approxi-psi-2} and \eqref{approxi-psi-3}. 

We next discuss the special case when $u_0$ is convex. 
In this case, we can see again by a standard argument of mollification that $\psi_{\vep,2}$ is convex due to the convexity of $u_{0, \vep}$ in $I_\vep$. % if $\vep$ is sufficiently small to satisfy \eqref{approxi-psi-III}. 
Since $\psi_{\vep, 1}$ is a strictly convex in $[-b, b]$, so is $\psi_\vep$. 
We are thus led to \eqref{psi-3} with $L_-(\vep)>0$ because of  \eqref{approxi-psi-3}. % then see that $F((\psi_\vep)_x, (\psi_\vep)_{xx}) > 0$ in $(-b,b)$, which yields $L_-(\vep) > 0$ due to \eqref{approxi-psi-I} and \eqref{approxi-psi-II}. 
\end{proof}

Let us now prove Theorem \ref{thm existence perron}. 

\begin{proof}[Proof of Theorem \ref{thm existence perron}]
Let $\psi_\vep$ be given as in Lemma \ref{lem regularization}. 
Using Proposition \ref{prop existence truncated}, for any $\vep>0$ small we can find a solution $u_\vep$ of \eqref{flow eq} and \eqref{bdry} with initial value $\psi_\vep$. Note that \eqref{psi-1} implies the existence of $\delta(\vep)>0$ with $\delta(\vep)\to 0$ as $\vep\to 0$ such that
\[
\max_{x\in [-b, b]}|\psi_\vep(x)-u_0(x)|\leq \delta(\vep).
\] 
We thus have 
\[
\max_{x\in [-b, b]}|\psi_{\vep}(x)-\psi_{\sigma}(x)|\leq \delta(\vep)+\delta(\sigma)
\]
for any $\vep, \sigma>0$ small. 
%\[
%\psi_1-C\leq \psi_j\leq \psi_1+C \quad \text{in $[-b, b]$}.
%\]
For any fixed $\sigma>0$ and any $\vep>0$, one can also use \eqref{psi-1}--\eqref{psi-3} to show that
\[
(x, t)\mapsto \psi_{\sigma}(x)+\delta(\sigma) +\delta(\vep)+ L_+(\sigma)t
\]
and
\[
(x, t)\mapsto \psi_\sigma(x)-\delta(\sigma) -\delta(\vep)+L_-(\sigma)t
\]
are respectively a supersolution and a subsolution of \eqref{flow eq}--\eqref{bdry}. In view of Theorem \ref{thm cp} and Proposition \ref{prop existence truncated}, there exists a solution $u_\vep$ of \eqref{flow eq}--\eqref{bdry} with 
\[
u_\vep(\cdot, 0)=\psi_\vep\quad \text{ in $[-b, b]$}
\] 
such that  
\beq\label{eq existence revise1}
\psi_\sigma(x)-\delta(\sigma)-\delta(\vep)+L_-(\sigma) t\leq u_\vep(x, t)\leq \psi_\sigma(x)+\delta(\sigma)+\delta(\vep)+L_+(\sigma) t
\eeq
for all $[x, t]\in \ol{Q}$ and $\vep<\sigma$. In particular, $u_\vep$ are uniformly bounded in $\ol{Q}_T$ for each $T>0$. 

By Theorem \ref{thm stability}, we deduce that the relaxed half limits $\ol{u}$ and $\ul{u}$ given by \eqref{stability limits}
%\[
%\ol{u}=\limsups_{\vep\to 0} u_j, \quad \ul{u}=\liminfs_{j\to \infty} u_j
%\]
are respectively a sub- and a supersolution of \eqref{flow eq}--\eqref{bdry}. 
We also define $\tilde{u}$ by changing the boundary values of $\ol{u}$ in the same way as in \eqref{bdry value change}. %Let 
%\[
%\tilde{u}(x, t)=\begin{cases}
%\displaystyle \ol{u}(x, t) &\text{if $x\in (-b, b),\ t\geq 0$,}\\
%\displaystyle \limsup_{\substack{(y, s)\to (x, t)\\ (y, s)\neq (x, t)}}\ol{u}(y, s) &\text{if $x=\pm b,\ t>0$.}
%\end{cases}
%\]
Then $\tilde{u}$ is still a subsolution of \eqref{flow eq} and satisfies the accessibility condition in Theorem \ref{thm cp}.
In addition, it follows from \eqref{eq existence revise1}, the upper semicontinuity of $\ol{u}$ and the continuity of $\psi_\sigma$ that 
\[
\psi_\sigma-\delta(\sigma)+L_-(\sigma) t\leq\ul{u}\leq \tilde{u}\leq \ol{u} \leq \psi_\sigma+\delta(\sigma)+L_+(\sigma) t\quad \text{ in $\ol{Q}$}
\]
which in particular yields 
\[
\psi_\sigma-\delta(\sigma)\leq \ul{u}(\cdot , 0)\leq \tilde{u}(\cdot,0) \leq \ol{u}(\cdot , 0)\leq \psi_\sigma+\delta(\sigma) \quad \text{ in $[-b, b]$}.
\]
Sending $\sigma\to 0$, we then obtain 
\[
\ul{u}(\cdot , 0)= \tilde{u}(\cdot,0) = \ol{u}(\cdot , 0)=u_0 \quad \text{in $[-b, b].$}
\]
We thus can adopt Theorem \ref{thm cp} to conclude that $\tilde{u}=\ul{u}$ is the desired unique solution of \eqref{flow eq}--\eqref{initial}.
\end{proof}

Let us %return to the case with regular initial data and
 present several results that will be used in our asymptotic analysis in Section~\ref{sec:asymptotics}. 
We first show that $u$ obtained in Proposition \ref{prop existence truncated} is Lipschitz in time. 
%We next show that, when $R> \max\{|L_+|, |L_-|\}$, the time Lipschitz regularity of the solution obtained above does not depend on $R$ under the regularity assumption (A3) on $u_0$.

\begin{prop}[Time Lipschitz continuity for regular initial data]\label{prop lip perron}
Let $b>0$. 
Assume that functions $f,g$ and $u_0$ satisfies (A1), (A2) and (A3). 
Let $u$ be the unique solution of \eqref{flow eq}, \eqref{bdry} and \eqref{initial} obtained in Proposition \ref{prop existence truncated}. 
Then 
\beq\label{time lip perron}
L_- (t-s) \le u(x, t)-u(x, s) \leq L_+(t-s)
\eeq
for all $x\in [-b, b]$ and $0 \le s < t < \infty$.
\end{prop}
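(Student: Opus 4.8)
The plan is to exploit the autonomy of \eqref{flow eq} in the time variable together with the comparison principle (Theorem \ref{thm cp}) applied to time-translated functions. Fix $h>0$ and consider $u^h(x,t):=u(x,t+h)$ on $\ol{Q}$. Since the equation \eqref{flow eq} and the boundary condition \eqref{bdry} do not depend explicitly on $t$, the function $u^h$ is again a viscosity solution of \eqref{flow eq} and \eqref{bdry} on $\ol{Q}$. Moreover, from the explicit barriers $v_\pm(x,t)=u_0(x)+L_\pm t$ constructed in the proof of Proposition \ref{prop existence truncated} and the comparison principle, one has $u_0(x)+L_-h\le u(x,h)\le u_0(x)+L_+h$ for all $x\in[-b,b]$; that is, at the initial time $t=0$,
\[
u_0(x)+L_- h\ \le\ u^h(x,0)=u(x,h)\ \le\ u_0(x)+L_+ h\qquad\text{in }[-b,b].
\]
Now compare: $u^h$ and $u(\cdot,\cdot)+L_+h$ are both solutions (the latter because adding a constant preserves solutions of \eqref{flow eq}), $u$ is continuous hence accessible on the boundary in the sense of \eqref{space conti}, and at $t=0$ we have $u^h(\cdot,0)\le u(\cdot,0)+L_+h$. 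Theorem \ref{thm cp} then gives $u(x,t+h)\le u(x,t)+L_+h$ on $\ol{Q}$. Symmetrically, comparing $u(\cdot,\cdot)+L_-h$ (a subsolution, in fact a solution) with $u^h$ and using $u(\cdot,0)+L_-h\le u^h(\cdot,0)$ yields $u(x,t)+L_-h\le u(x,t+h)$. Writing $t=s$ and $h=t-s$ for $0\le s<t$ gives exactly \eqref{time lip perron}.

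One technical point to verify is that the comparison theorem applies in this setting: in each of the two comparisons, the subsolution is continuous on $\ol{Q}$ (it is $u$ itself, shifted by a constant or in time, and $u\in C([-b,b]\times[0,\infty))$ by Proposition \ref{prop existence truncated}), hence it is locally bounded in the sense of \eqref{locally_bdd}, its restriction to $t=0$ is continuous, and the accessibility condition \eqref{space conti} holds trivially by continuity. So the hypotheses of Theorem \ref{thm cp} are met in both directions.

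I expect the argument to be essentially routine; the only thing that requires a moment of care—and the step I would flag as the crux—is justifying that $u^h$ is genuinely a sub/supersolution of the singular Neumann problem on the full strip $\ol{Q}$ (not merely on a truncated strip), i.e., that time-translation is compatible with Definition \ref{def singular}, including the boundary testing condition (ii). This is immediate because \eqref{bdry} is a condition ``for any $t>0$'' with no explicit time dependence, so a test function $\phi(x,t)$ touching $u^h$ from below at $(b,t_0)$ corresponds to $\phi(x,t-h)$ touching $u$ from below at $(b,t_0+h)$, which is forbidden by (ii) for $u$. Once this observation is in place, the two applications of Theorem \ref{thm cp} close the proof.
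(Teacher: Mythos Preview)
Your proposal is correct and follows essentially the same approach as the paper's proof: use the barriers $v_\pm(x,t)=u_0(x)+L_\pm t$ together with Theorem \ref{thm cp} to bound $u(\cdot,h)$, then compare the time-shifted solution $u(\cdot,\cdot+h)$ with $u(\cdot,\cdot)+L_\pm h$ via a second application of Theorem \ref{thm cp}. Your write-up is slightly more careful than the paper's in explicitly verifying the hypotheses of the comparison principle (accessibility, local boundedness, and compatibility of time-translation with Definition \ref{def singular}(ii)), but the argument is the same.
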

\begin{proof}
Recalling that $v_+$ given in \eqref{initial barriers} is a supersolution of \eqref{flow eq} and \eqref{bdry}, by Theorem \ref{thm cp} we get
\[
u(x, h)\leq v_+(x, h)=u_0(x)+L_+ h
\]
for all $(x, h)\in \ol{Q}$. Since $(x, t)\mapsto u(x, t)+L_+h$ and $(x, t)\mapsto u(x, t+h)$ are two solutions of \eqref{flow eq} and \eqref{bdry}, we can apply Theorem \ref{thm cp} again to deduce that 
\[
u(x, t+h)\leq u(x, t)+L_+h
\]
for all $(x, t)\in \ol{Q}$ and $h\geq 0$. Using the lower barrier $v_-$, we can obtain 
\[
u(x, t+h)\geq u(x, t)+L_-h. 
\]
for all $(x, t)\in \ol{Q}$ and $h\geq 0$, which completes the proof of \eqref{time lip perron}.
\end{proof}

We next give an interpretation of Proposition~\ref{prop lip perron} in terms of the boundedness of $F(u_x, u_{xx})$ in the viscosity sense. 

\begin{cor}[Preservation of operator boundedness]\label{cor:fixt}
Let $b>0$. 
Assume that functions $f,g$ and $u_0$ satisfy (A1), (A2) and (A3), respectively. 
Let $u$ be the unique solution of \eqref{flow eq}--\eqref{initial}. 
Then, $u(\cdot,t)$ satisfy
\begin{equation}
-L_+ \le F(u_x, u_{xx}) \le -L_- \quad \text{in $(-b, b)$} \label{fixt eq}
\end{equation}
in the viscosity sense for any $t > 0$. 
\end{cor}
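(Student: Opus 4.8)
The plan is to prove \eqref{fixt eq} by establishing the two one-sided inequalities separately: for each fixed $t_0>0$ the slice $x\mapsto u(x,t_0)$ is a viscosity subsolution of $F(w_x,w_{xx})\le -L_-$ and a viscosity supersolution of $F(w_x,w_{xx})\ge -L_+$ in $(-b,b)$. The key preliminary observation is that estimate \eqref{time lip perron} of Proposition \ref{prop lip perron} is equivalent to the statement that, for every fixed $x\in[-b,b]$, the functions $t\mapsto u(x,t)-L_-t$ and $t\mapsto L_+t-u(x,t)$ are nondecreasing on $[0,\infty)$. This monotonicity is precisely the tool that will let us pass from the parabolic subsolution property of $u$, which is tested over a full space--time neighbourhood, to an elliptic inequality on the time slice $\{t=t_0\}$, in spite of the fact that a product test function of the form $\eta(x)+\mu(t)$ can in general only be arranged to touch $u$ from above over the half-neighbourhood $\{t\le t_0\}$.

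I will carry out the subsolution half in detail; the supersolution half is the mirror image. Let $\eta\in C^2((-b,b))$ be such that $u(\cdot,t_0)-\eta$ has a local maximum at $x_0\in(-b,b)$; after replacing $\eta$ by $\eta+(x-x_0)^4$ (which changes neither $\eta'(x_0)$ nor $\eta''(x_0)$) I may assume this maximum is strict over some $\overline{B_r(x_0)}\subset(-b,b)$. For small $\epsilon>0$ I will look at the penalized function
\[ \Phi_\epsilon(x,t):=u(x,t)-\eta(x)-L_-(t-t_0)-\frac{(t-t_0)^2}{\epsilon}=\big(u(x,t)-L_-t\big)-\eta(x)+L_-t_0-\frac{(t-t_0)^2}{\epsilon} \]
on a cylinder $K:=\overline{B_r(x_0)}\times[t_0-r',t_0+r']$ with $r'\in(0,t_0)$ fixed and small. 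The monotonicity of $t\mapsto u(x,t)-L_-t$ shows that on $\{t\le t_0\}$ one has $\Phi_\epsilon(x,t)\le u(x,t_0)-\eta(x)\le M:=u(x_0,t_0)-\eta(x_0)$, with equality only at $(x_0,t_0)$; applying that same monotonicity at the level $t_0+r'$, together with the continuity of $u$, bounds $\Phi_\epsilon$ from above on $\{t\ge t_0\}$ by $C-\epsilon^{-1}(t-t_0)^2$ for some constant $C$. Consequently any point $(x_\epsilon,t_\epsilon)$ maximizing $\Phi_\epsilon$ over $K$ satisfies $t_\epsilon\ge t_0$, lies in the interior of $K$, and converges to $(x_0,t_0)$ as $\epsilon\to0$ (the convergence in space following from the strictness of the spatial maximum and the continuity of $u$).

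Since $(x_\epsilon,t_\epsilon)\in Q$, I then invoke the subsolution property of $u$ from Definition \ref{def singular} with the test function $\psi(x,t):=\eta(x)+L_-(t-t_0)+\epsilon^{-1}(t-t_0)^2$, which touches $u$ from above at $(x_\epsilon,t_\epsilon)$, to obtain
\[ L_-+\frac{2(t_\epsilon-t_0)}{\epsilon}+F\big(\eta'(x_\epsilon),\eta''(x_\epsilon)\big)\le 0. \]
Because $t_\epsilon\ge t_0$ the middle term is nonnegative, so $F(\eta'(x_\epsilon),\eta''(x_\epsilon))\le -L_-$, and letting $\epsilon\to0$ and using the continuity of $f$, $g$ and of $\eta'$, $\eta''$ gives $F(\eta'(x_0),\eta''(x_0))\le -L_-$, as desired. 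The reverse inequality $F(u_x,u_{xx})\ge -L_+$ is obtained identically: one tests $u$ from below by $\zeta(x)+L_+(t-t_0)-\epsilon^{-1}(t-t_0)^2$, uses the monotonicity of $t\mapsto L_+t-u(x,t)$ to locate a minimizer of the corresponding penalized function with $t_\epsilon\ge t_0$, and passes to the limit. I expect the only delicate point to be exactly this choice of an extremizer with $t_\epsilon\ge t_0$: since $t\mapsto u(x,t)$ need not be differentiable one cannot touch the slice $u(\cdot,t_0)$ from above by a product test function in a two-sided space--time neighbourhood, and it is the monotonicity form of \eqref{time lip perron} that guarantees the time-derivative term $2(t_\epsilon-t_0)/\epsilon$ of the test function carries the favourable sign; the remaining verifications are routine.
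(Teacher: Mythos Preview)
Your argument is correct and follows essentially the same approach as the paper: penalize in time by a quadratic, use the time-Lipschitz bound \eqref{time lip perron} to control the maximizer and the penalty derivative, and apply the parabolic viscosity inequality at an interior point converging to $(x_0,t_0)$. The paper's version is marginally simpler in that it omits your linear term $L_-(t-t_0)$ and instead applies \eqref{time lip perron} directly at the maximizer of $u(x,t)-\phi(x)-|t-s|^2/\varepsilon$ to deduce $L_-\le 2(t_\varepsilon-s)/\varepsilon\le L_+$, which after substitution into the viscosity inequality yields both halves of \eqref{fixt eq} from a single test; your monotonicity trick to force $t_\epsilon\ge t_0$ achieves the same end by a slightly different bookkeeping.
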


\begin{proof}
Fix an arbitrary $s>0$. 
First, we prove that $u(\cdot,s)$ satisgy the second inequality in \eqref{fixt eq} in $(-b,b)$. 
Let $\phi \in C^2([-b,b])$ and $u(\cdot,s)-\phi$ attains a strict maximum in $[-b,b]$ at $x_0 \in (-b,b)$. 
Then there exists a maximum point $(x_\varepsilon, t_\varepsilon) \in \ol{Q}$ for 
\[ \Phi_\varepsilon(x,t) := u(x,t) - \phi(x) - \dfrac{|t-s|^2}{\varepsilon} \]
for $\varepsilon > 0$ by virtue of \eqref{time lip perron} and the uniform boundedness of $u$ in finite time intervals. 
Applying \eqref{time lip perron} again, we have 
\begin{equation}\label{fixt1} 
L_- \le \dfrac{2(t_\varepsilon - s)}{\varepsilon} \le L_+, 
\end{equation}
hence $t_\varepsilon \to s$ as $\varepsilon \to 0$. 
Since the Lipschitz continuity in time implies that $u(\cdot,t_\varepsilon)$ uniformly converges to $u(\cdot,s)$, we have also $x_\varepsilon \to x_0$. 
Thus, $x_\varepsilon \in (-b,b)$ and $t_\varepsilon > 0$ for sufficiently small $\varepsilon$ and we obtain by the definition of the subsolution for \eqref{flow eq} 
\[ \dfrac{2(t_\varepsilon - s)}{\varepsilon} + F(\phi_x(x_\varepsilon), \phi_{xx}(x_\varepsilon)) \le 0. \]
Applying \eqref{fixt1} and letting $\varepsilon \to 0$, we can see that $u(\cdot,s)$ is a solution of the second inequality in \eqref{fixt eq} in $(-b,b)$. 
For the first inequality in \eqref{fixt eq}, a similar argument can be applied. 
\end{proof}

Corollary \ref{cor:fixt} immediately yields the convexity of $u(\cdot, t)$ for any $t\geq 0$ if it is known that $L_- \ge 0$. 
We thus can obtain the following convexity preserving property by using the approximation argument in the proof of Theorem \ref{thm existence perron} together with the construction of $\psi_\vep$ in Lemma \ref{lem regularization} for convex $u_0$. 

%Note that the non-positivity of $F$ is equivalent to non-negativity of $u_{xx}$ by virtue of the conditions (A1) and (A2). 
%We need the following convexity result to prove Theorem \ref{thm:conver-sol}. 
\begin{cor}[Convexity preserving]\label{cor:convex}
Let $b>0$. 
Assume that functions $f, g$ satisfy (A1) and (A2). Let $u_0\in C([-b, b])$ be a convex function. 
Let $u$ be the unique solution of \eqref{flow eq}--\eqref{initial}. 
Then $u(\cdot, t)$ is convex for all $t\geq 0$. 
\end{cor}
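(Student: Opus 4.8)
The plan is to combine Corollary~\ref{cor:fixt} with the approximation scheme from the proof of Theorem~\ref{thm existence perron}. The key observation is that Corollary~\ref{cor:fixt}, read through the identity $F(p,z)=-f(g(p)z)$, says precisely that for a regular, convex-enough initial datum the solution stays convex for all later times, and convexity is stable under the half-relaxed-limit passage used to treat a merely continuous $u_0$.

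Concretely, I would first treat the case in which $u_0$ satisfies (A3) with $L_-\ge 0$. By Corollary~\ref{cor:fixt}, for every $t>0$ the function $u(\cdot,t)$ satisfies $F(u_x,u_{xx})\le -L_-$ on $(-b,b)$ in the viscosity sense, i.e.\ $f\big(g(u_x)u_{xx}\big)\ge L_-\ge 0$ in the viscosity sense. Thus, whenever $\phi\in C^2$ and $u(\cdot,t)-\phi$ has a local maximum at some $x_0\in(-b,b)$, we get $f\big(g(\phi_x(x_0))\phi_{xx}(x_0)\big)\ge 0$; since $f$ is strictly increasing with $f(0)=0$ and $g>0$ by (A1)--(A2), this forces $\phi_{xx}(x_0)\ge 0$. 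Hence $u(\cdot,t)$ is a viscosity subsolution of $-w_{xx}\le 0$ on $(-b,b)$, which is equivalent to convexity of $u(\cdot,t)$ on $(-b,b)$; since $u(\cdot,t)\in C([-b,b])$, convexity extends to $[-b,b]$. At $t=0$ convexity holds by assumption (indeed $L_-\ge 0$ together with \eqref{initial cond1} already gives convexity of $u_0$).

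For a general convex $u_0\in C([-b,b])$, I would invoke Lemma~\ref{lem regularization}: for each small $\vep>0$ there are constants with the relevant lower one positive and a strictly convex $\psi_\vep\in C^\infty((-b,b))\cap C([-b,b])$ satisfying \eqref{psi-1}--\eqref{psi-3}; in particular $\psi_\vep$ fulfills (A3) with $L_-(\vep)>0$. Proposition~\ref{prop existence truncated} then yields the (unique) solution $u_\vep$ of \eqref{flow eq}--\eqref{bdry} with $u_\vep(\cdot,0)=\psi_\vep$, and by the previous paragraph each slice $u_\vep(\cdot,t)$ is convex. Exactly as in the proof of Theorem~\ref{thm existence perron}, the bounds \eqref{eq existence revise1} make $(u_\vep)$ uniformly bounded on every $\ol{Q}_T$, and the coincidence of the relaxed half-limits there forces $u_\vep\to u$ pointwise (indeed locally uniformly) on $Q$. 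Since convexity of functions on an interval passes to pointwise limits, $u(\cdot,t)$ is convex on $(-b,b)$ for every $t>0$; combined with $u(\cdot,t)\in C([-b,b])$ this gives convexity on $[-b,b]$, while the case $t=0$ is the hypothesis on $u_0$.

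The only genuinely non-bookkeeping point is the elementary equivalence used above, namely that a continuous function which is a viscosity subsolution of $-w_{xx}\le 0$ on an interval is convex there. This is seen by testing, on a chord $[a_1,a_2]$ along which convexity would fail (so $w-\ell$ has a positive maximum at an interior point, $\ell$ being the affine interpolant), with the perturbed function $x\mapsto \ell(x)-\vep(x-a_1)(x-a_2)$ for small $\vep>0$: this still touches $w$ strictly below its graph at an interior maximum of $w-\ell$ but has second derivative $-2\vep<0$, contradicting the subsolution inequality. All remaining ingredients — Corollary~\ref{cor:fixt}, Lemma~\ref{lem regularization}, Proposition~\ref{prop existence truncated}, and the stability/half-limit argument from Theorem~\ref{thm existence perron} — are already established, so I do not expect a serious obstacle.
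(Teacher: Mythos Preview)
Your proposal is correct and follows essentially the same route as the paper: first use Corollary~\ref{cor:fixt} under (A3) with $L_-\ge 0$ to conclude $u_{xx}\ge 0$ in the viscosity sense (hence convexity of each time slice), then treat a general convex $u_0$ by the approximation of Lemma~\ref{lem regularization} and the stability argument from the proof of Theorem~\ref{thm existence perron}. The paper states this outline in one sentence; you have simply made explicit the elementary steps (the viscosity characterization of convexity and the passage of convexity to pointwise limits), which are fine.
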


\subsection{Nonexistence of spatially bounded solutions for $\alpha\leq 2$}\label{sec:nonexistence}

We consider the behavior of solution of \eqref{flow eq} and \eqref{bdry} with $g(p) \ge O(|p|^{-2})$ as $p \to \pm\infty$. 
We construct subsolutions $\ul{u} (\cdot;k)$ of \eqref{flow eq} and \eqref{bdry} that start from uniformly bound initial data but tend to infinity on the boundary at any positive time as $k \to \infty$. 

Let $b>0$. 
Assume $f$ and $g$ satisfy (A1) and (A2'). 
Then, there exist $p_0 > 0$ and $M > 0$ such that 
\begin{equation}\label{order-g-blowup} 
g(p) \ge 2M |p|^{-2} \quad \text{for} \; \; |p| \ge p_0. 
\end{equation}
For sufficiently large $k>0$, let 
\begin{equation}\label{choice-r} 
r_k := \sqrt{\frac{1+k^2}{k^2}} b 
\end{equation}
and $y_k \in (0, e^{-1})$ is chosen to satisfy 
\begin{equation}\label{choice-y}
\frac{1}{y_k\log y_k} = -k. 
\end{equation}
Define $x(k,t)$ by 
\begin{equation}\label{choice-x} 
x(k,t) = \exp\left(-\exp \{f(-M\log y_k) t + \log(-\log y_k)\}\right) + b - y_k \quad \text{for} \; \; t \ge 0. 
\end{equation}
Define 
\begin{equation}\label{def:lower-blowup}
 \ul{u} (x,t;k) := 
\begin{cases}
\displaystyle \log\left(\frac{\log\{x(k,t) - x + y_k\}}{\log y_k}\right) - \sqrt{r_k^2 - x(k,t)^2} & \text{for} \; \; x \in (x(k,t), b], \\
\displaystyle -\sqrt{r_k^2 - x^2} & \text{for} \; \; x \in [-x(k,t), x(k,t)], \\
\displaystyle \log\left(\frac{\log\{x(k,t) + x + y_k\}}{\log y_k}\right) - \sqrt{r_k^2 - x(k,t)^2} & \text{for} \; \; x \in [-b, -x(k,t)). 
\end{cases} 
\end{equation}
%\end{defi}

\begin{rmk}
Let us make some remarks on our choice of $r_k$, $y_k$ and $x(k, t)$ above. 
\begin{itemize}
\item $r_k$ in \eqref{choice-r} is chosen so that the derivative of the graph of the arc with radius $r_k$ centered at the origin in $\R^2$ is $\pm k$ at the boundary $x=\pm b$. 
\item The choice of $y_k$ in \eqref{choice-y} is unique in $(0,e^{-1})$ for sufficiently large $k>0$ and $y_k \to 0$ as $k \to \infty$. 
\item $x(k,t)$ in \eqref{choice-x} is the solution of the ordinary differential equation 
\begin{equation}\label{ODE-x} 
\frac{x'(t)}{\{x(t)-b+ y_k\} \log\{x(t)-b + y_k\}} = f(-M \log y_k) 
\end{equation}
in $[0, \infty)$ for any $k>0$ large and $x(0) = b$. 
\end{itemize}
\end{rmk}

\begin{lem}[Construction of subsolutions]\label{lem:subsolution-blowup}
Let $b > 0$. 
Assume that functions $f$ and $g$ satisfy (A1) and (A2'). 
Then, $\ul{u}(\cdot, k)$ defined by \eqref{def:lower-blowup} is continuous in $\ol{Q}$ and a subsolution of \eqref{flow eq} and \eqref{bdry} for sufficiently large $k>0$. 
\end{lem}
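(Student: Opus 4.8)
The plan is to verify the conditions of Definition~\ref{def singular} for the continuous function $\ul{u}$ defined in \eqref{def:lower-blowup}, working separately on its three pieces. Since $\ul{u}$ is $C^\infty$ in the interior of each piece, the work splits into: (i) checking joint continuity on $\ol{Q}$, in particular across the moving interfaces $x=\pm x(k,t)$; (ii) proving the differential inequality classically in the interior of each region; and (iii) handling the two interface curves, where Definition~\ref{def singular} requires nothing of a subsolution. Throughout I would take $k$ large enough that the preliminary facts hold: by \eqref{choice-x}, $x(k,t)\in(b-y_k,b]$ for all $t\ge0$ with $x(k,0)=b$ and $x(k,t)<b$ for $t>0$, $x(k,\cdot)$ is $C^1$ and solves \eqref{ODE-x}; and, since $y_k\to0$, also $y_k<e^{-2}$ and $k\ge p_0$ (with $p_0,M$ as in \eqref{order-g-blowup}).

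For (i), writing $\xi:=x(k,t)$ and, on the right outer piece $x\in(\xi,b]$, $w:=\xi-x+y_k\in(0,y_k]\subset(0,e^{-1})$, one checks that as $x\downarrow\xi$ the outer formula tends to $\log(\log y_k/\log y_k)-\sqrt{r_k^2-\xi^2}=-\sqrt{r_k^2-\xi^2}$, which agrees with the middle formula at $x=\xi$; together with continuity of $\xi$ in $t$ this gives $\ul{u}\in C(\ol{Q})$ (the left piece is symmetric).

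For (ii), on the middle region $\ul{u}=-\sqrt{r_k^2-x^2}$ is independent of $t$ with $\ul{u}_{xx}=r_k^2(r_k^2-x^2)^{-3/2}>0$ because $|x|<\xi\le b<r_k$, so $\ul{u}_t+F(\ul{u}_x,\ul{u}_{xx})=-f(g(\ul{u}_x)\ul{u}_{xx})<0$ by (A1) and positivity of $g$. On the right outer region I would compute $\ul{u}_x=-1/(w\log w)>0$, $\ul{u}_{xx}=-(\log w+1)/(w\log w)^2>0$, and $\ul{u}_t=\xi'\big(1/(w\log w)+\xi/\sqrt{r_k^2-\xi^2}\big)$. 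The estimate then proceeds as follows: by \eqref{choice-r}, \eqref{choice-y} and the monotonicity of $s\mapsto s/\sqrt{r_k^2-s^2}$ on $(0,r_k)$ and of $w\mapsto 1/(w\log w)$ on $(0,e^{-1})$, the bracket in $\ul{u}_t$ is negative with absolute value at most $1/(-w_b\log w_b)$, where $w_b:=\xi-b+y_k$, while \eqref{ODE-x} gives $|\xi'|=f(-M\log y_k)(-w_b\log w_b)$; multiplying these yields $0<\ul{u}_t<f(-M\log y_k)$. On the other hand $\ul{u}_x\ge k\ge p_0$, so \eqref{order-g-blowup} gives $g(\ul{u}_x)\ul{u}_{xx}\ge 2M\ul{u}_x^{-2}\ul{u}_{xx}=-2M(\log w+1)>0$, whence $f(g(\ul{u}_x)\ul{u}_{xx})\ge f(-2M(\log w+1))$ by (A1); since $w\le y_k<e^{-2}$ one has $-2M(\log w+1)\ge -M\log y_k$, so altogether $f(g(\ul{u}_x)\ul{u}_{xx})\ge f(-M\log y_k)>\ul{u}_t$, i.e.\ $\ul{u}_t+F(\ul{u}_x,\ul{u}_{xx})\le0$. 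The left outer region is handled symmetrically.

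For (iii), fix $t>0$, so that $\xi<b$; then the left derivative of $\ul{u}(\cdot,t;k)$ at $x=\xi$ equals $\xi/\sqrt{r_k^2-\xi^2}<b/\sqrt{r_k^2-b^2}=k$ while its right derivative equals $-1/(y_k\log y_k)=k$, so $\ul{u}(\cdot,t;k)$ has an upward corner at $x=\xi$; hence no $\phi\in C^2(\ol{Q})$ can have $\ul{u}-\phi$ attain a local maximum at $(\xi,t)$, and the subsolution inequality there is vacuous (and likewise at $x=-\xi$). As Definition~\ref{def singular} imposes no condition on subsolutions at $x=\pm b$, this finishes the verification. The step I expect to be the main obstacle is the interior estimate on the outer regions: one has to keep the sign bookkeeping straight ($w\log w<0$, $\log w+1<0$, $\xi'<0$, and the bracket in $\ul{u}_t$ negative) and to pinpoint exactly where largeness of $k$ is used — namely $k\ge p_0$, so that \eqref{order-g-blowup} applies to $\ul{u}_x$, and $y_k\le e^{-2}$, so that the crude bound $f(-M\log y_k)$ on $\ul{u}_t$ is absorbed by $f(-2M(\log w+1))$. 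Everything else is routine one-variable calculus.
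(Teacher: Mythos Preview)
Your proposal is correct and follows essentially the same approach as the paper: split into the three regions, verify the classical inequality on each smooth piece by bounding $\ul{u}_t$ above and $f(g(\ul{u}_x)\ul{u}_{xx})$ below by the common value $f(-M\log y_k)$, and rule out test functions at the interfaces via the one-sided derivative jump. Your handling of the outer region is organized slightly differently---you bound the full bracket in $\ul{u}_t$ rather than simply discarding the second (negative) summand as the paper does---but the resulting estimate is identical, and your one-step bound $-2M(\log w+1)\ge -M\log y_k$ under $y_k<e^{-2}$ just merges the paper's two consecutive inequalities.
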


\begin{proof}
For simplicity of notation, below we omit the parameter $k$ in $\ul{u}$.
We can easily see that $\ul{u}$ is continuous in $\ol{Q}$ and smooth in $Q \setminus \Gamma_k$, where  
\[
\Gamma_k=\cup_{t > 0}\{(\pm x(k,t),t)\}.
\] 
Furthermore, the choice of $y_k$ \eqref{choice-y} yields 
\begin{equation}\label{order-ux} \begin{aligned}
&\lim_{x \to x(k,t) + } \ul{u}_x (x,t) = k > \lim_{x \to x(k,t) -} \ul{u}_x(x,t), \\
&\lim_{x \to -x(k,t) - } \ul{u}_x (x,t) = -k < \lim_{x \to -x(k,t) +} \ul{u}_x(x,t)  
\end{aligned} \end{equation}
for $t > 0$. 
Thus, $\ul{u}(x,t) - \phi(x,t)$ cannot attain a local maximum at $(\pm x(k,t), t)$ for any $t > 0$ and $\phi \in C^2(\ol{Q})$. 
Indeed, if we assume $\ul{u}(x,t) - \phi(x,t)$ attains a local maximum at $(x(k,t_0), t_0)$ for some $t_0 > 0$ and $\phi \in C^2(\ol{Q})$, we have 
\[ \ul{u}(x(k,t_0)+h, t_0) - \ul{u}(x(k,t_0), t_0) \le \phi(x(k,t_0)+h, t_0) - \phi(x(k,t_0), t_0) \quad \text{for} \; \; h \in \R, \]
which yields 
\[ \lim_{x \to x(k,t_0) +}\ul{u}_x(x,t_0) \le \phi_x(x(k,t_0),t_0) \le \lim_{x \to x(k,t_0) -} \ul{u}_x(x,t_0). \]
This contradicts \eqref{order-ux}. 
When $\ul{u}(x,t) - \phi(x,t)$ attains a local maximum at $(-x(k,t_0), t_0)$, we also obtain a contradiction by a similar argument. 
Therefore, it is sufficient to prove that $\ul{u}$ satisfies 
\[ \ul{u}_t \le f(g(\ul{u}_x) \ul{u}_{xx}) \quad \text{in} \; \; Q \setminus \Gamma_k\]
in the classical sense. 

Case 1. 
For $(x,t) \in Q \setminus \Gamma_k$ with $-x(k,t) < x < x(k,t)$, it is easily seen that $\ul{u}_t =0$ and $\ul{u}_{xx} > 0$. 
Thus, we have $\ul{u}_t \le f(g(\ul{u}_x) \ul{u}_{xx})$ by the conditions (A1) and (A2'). 

Case 2. 
For $(x,t) \in Q \setminus \Gamma_k$ with $x(k,t) < x < b$, we have 
\[ \ul{u}_x(x,t) = - \frac{1}{\{x(k,t) - x + y_k\}\log\{x(k,t) - x + y_k\}} \ge - \frac{1}{y_k\log y_k} = k \]
for sufficiently large $k > 0$. 
It implies by \eqref{order-g-blowup} and $y_k \to 0$ as $k \to 0$ that 
\begin{equation}\label{sub-blowup1}
\begin{aligned}
f(g(\ul{u}_x(x,t)) \ul{u}_{xx}(x,t)) \ge&\; f(2M(\ul{u}_x(x,t))^{-2} \ul{u}_{xx}(x,t)) \\
=&\; f(-2M(1 + \log\{x(k,t) - x + y_k\})) \\
\ge&\; f(-M\log\{x(k,t) - x + y_k\}) \\
\ge&\; f(-M\log y_k)
\end{aligned}
\end{equation}
for sufficiently large $k > 0$. 
On the other hand, we can also obtain by $x_t(k,t) <0$ and $x(k,t) > 0$ 
\begin{equation}\label{sub-blowup2}
\begin{aligned}
\ul{u}_t =&\; \frac{x_t(k,t)}{\{x(k,t) - x + y_k\}\log\{x(k,t) - x + y_k\}} + \frac{x(k,t) x_t(k,t)}{\sqrt{r_k^2 - x(k, t)^2}} \\
\le&\; \frac{x_t(k,t)}{\{x(k,t) - b + y_k\}\log\{x(k,t) - b + y_k\}}. 
\end{aligned}
\end{equation}
Since $x(k,t)$ is chosen to satisfy \eqref{ODE-x}, it follows from \eqref{sub-blowup1} and \eqref{sub-blowup2} that $\ul{u}_t \le f(g(\ul{u}_x) \ul{u}_{xx})$ holds. 
In the case $-b < x < -x(k,t)$, the inequality can be obtained similarly. 
\end{proof}

We are now in a position to prove Theorem \ref{thm:blowup}. 

\begin{proof}[Proof of Theorem \ref{thm:blowup}] 
Assume that $u\in C(\ol{Q})$ is a solution of \eqref{flow eq}--\eqref{initial} and fix $t_0 > 0$. 
For sufficiently large $k$, let $\ul{u}$ be the function defined by \eqref{def:lower-blowup}. 
Then, 
\[ u(x,0) \ge \min_{-b \le y \le b} u(y,0) \ge \ul{u}(x,t;k) + \min_{-b \le y \le b} u(y, 0) \quad \text{for} \quad x \in [-b,b]. 
 \]
%and $\ul{u}$ satisfies \eqref{time lip} in the time interval $[0,t_0]$, where the Lipschitz constant $L$ depends on $t_0$ and $k$. 
Therefore, we obtain by the comparison principle 
\begin{equation}\label{blowup-ine-con}\begin{aligned} 
u(b, t_0) \ge&\; \ul{u}(b,t_0; k) + \min_{-b \le x \le b} u(x,0) \\
=&\; t_0 f(-M\log y_k) - \sqrt{r_k^2-x(k,t_0)^2} + \min_{-b \le x \le b} u(x,0) \\
\ge&\; t_0 f(-M\log y_k) - r_k + \min_{-b \le x \le b} u(x,0)
\end{aligned}\end{equation}
for sufficiently large $k>0$. 
Since $y_k \to 0$ and $r_k \to b$ as $k \to \infty$, we have $u(b,t_0) = \infty$ by letting $k \to \infty$ in \eqref{blowup-ine-con}, which contradicts the boundedness of $u$ on $[-b,b] \times [0,t_0]$. 
\end{proof}

\section{Stationary problem}\label{sec:stationary}

In this section, we first prove Theorem \ref{thm:ex-tw}. 
As we mentioned in the introduction, we will need the regularity theory for the profile function. 

\begin{lem}[Regularity improvement]\label{lem:regularity-TW} 
Let $b>0$. 
Assume that functions $f$ and $g$ satisfy (A1) and (A2). 
Assume also that $W \in C([-b,b]) \cap C^{0,1}((-b,b))$ and $c \in \mathbb{R}$ satisfy \eqref{traveling eq} in the viscosity sense. 
Then, $W \in C^{1,1}(-b,b)$. 
\end{lem}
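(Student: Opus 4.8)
The plan is to upgrade the regularity of $W$ from $C^{0,1}$ to $C^{1,1}$ by exploiting the structure of \eqref{traveling eq} rewritten via the inverse function $f^{-1}$. Since $W \in C^{0,1}((-b,b))$ is locally Lipschitz, for any compact interval $[a_1, a_2] \subset (-b,b)$ the gradient $W_x$ is bounded, say $|W_x| \le K$ on $[a_1, a_2]$. On this interval, the equation \eqref{traveling eq} says that $W$ solves
\[
g(W_x) W_{xx} = f^{-1}(c)
\]
in the viscosity sense, where $f^{-1}$ is well-defined and continuous by (A1). First I would observe that, because $g$ is continuous and positive by (A2), it is bounded below by some $\mu > 0$ on the compact set $\{|p| \le K\}$; hence $W$ is a viscosity solution of both
\[
W_{xx} \le \frac{f^{-1}(c)}{\mu_-} \quad \text{and} \quad W_{xx} \ge \frac{f^{-1}(c)}{\mu_+}
\]
on $[a_1, a_2]$, for suitable constants $0 < \mu_- \le \mu_+$ (taking care of the sign of $f^{-1}(c)$, which could be positive, negative, or zero). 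These one-sided bounds on $W_{xx}$ in the viscosity sense are exactly semiconvexity and semiconcavity estimates with explicit constants.

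Next I would use the standard fact that a continuous function which is simultaneously semiconvex and semiconcave on an interval is $C^{1,1}$ there: semiconvexity plus semiconcavity forces $W$ to be differentiable with $W_x$ locally Lipschitz, and the Lipschitz constant of $W_x$ is controlled by the two one-sided bounds obtained above. Concretely, $W_{xx} \le C$ in the viscosity sense means $x \mapsto W(x) - \tfrac{C}{2} x^2$ is concave, and $W_{xx} \ge -C'$ means $x \mapsto W(x) + \tfrac{C'}{2} x^2$ is convex; intersecting these gives, on any slightly smaller interval, the bound $|W_x(x) - W_x(y)| \le \max\{C, C'\}\,|x-y|$. Since $[a_1,a_2]$ was arbitrary in $(-b,b)$, this yields $W \in C^{1,1}_{\loc}((-b,b)) = C^{1,1}((-b,b))$ in the sense used in the paper.

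The main obstacle I anticipate is the bookkeeping around the sign of $f^{-1}(c)$ and the fact that the bound $|W_x| \le K$ depends on the compact subinterval, so the $C^{1,1}$ norm degenerates as one approaches $\pm b$ (which is expected and unavoidable, given the singular boundary condition \eqref{traveling bdry}). One must also be slightly careful that $g(W_x) W_{xx} = f^{-1}(c)$ holds in the viscosity sense with both inequalities — this follows by testing with $C^2$ functions and using that $g(\phi_x(x_0))$ is a fixed positive number at the contact point, so dividing the viscosity inequality by it is legitimate. Beyond that, the argument is routine: the genuinely delicate analytic input (the shooting-method construction and the local $W_{xx}$ bound being integrable) has already been done in establishing Theorem \ref{thm:ex-tw}, and here we merely repackage the pointwise $W_{xx}$ estimate $W_{xx} = f^{-1}(c)/g(W_x)$ as a two-sided viscosity bound and invoke the semiconvexity/semiconcavity characterization of $C^{1,1}$.
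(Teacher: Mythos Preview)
Your proposal is correct and follows essentially the same approach as the paper: restrict to a compact subinterval, use the local Lipschitz bound on $W_x$ together with the positivity of $g$ to rewrite \eqref{traveling eq} as two-sided viscosity bounds on $W_{xx}$, and then invoke the standard fact that semiconvex plus semiconcave implies $C^{1,1}$ (which the paper records as Corollary~\ref{cor:regular-convex}). Your remarks on the sign of $f^{-1}(c)$ and on the legitimacy of dividing by $g(\phi_x(x_0))$ at the test point are even a bit more careful than the paper's own write-up.
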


\begin{proof}
Let $b_0 \in (0, b)$. 
Then, from $W \in C^{0,1}((-b,b))$, there exists $M>0$ such that 
\[ |W(x) - W(y)| \le M |x-y| \quad \text{for} \quad x,y \in [-b_0, b_0]. \]
Thus, $W$ has derivative $W_x$ almost everywhere in $[-b_0, b_0]$ and $|W_x(x)| \le M$ if the derivative exists. 
It yields that $W$ satisfies 
\begin{equation}\label{ine-Wxx}
\begin{aligned}
A_1 := f^{-1}(c)/ \Big(\max_{|p| \le M} g(p)\Big) \le&\; f^{-1}(v)/g(W_x) = W_{xx}  \\
\le&\; f^{-1}(c)/\Big(\min_{|p| \le M} g(p)\Big) =: A_2 \quad \text{in $(-b_0, b_0)$}
\end{aligned}
\end{equation}
in the sense of viscosity. 
Then, $W(x)- A_1 x^2/2$ is convex and $W(x)- A_2x^2/2$ is concave in $[-b_0, b_0]$; in other words, $W$ is both semiconvex and semiconcave. 
Thus, it is clear that $W \in C^{1,1}((-b,b))$; see also Corollary \ref{cor:regular-convex}. 
\end{proof}

Let us now prove Theorem \ref{thm:ex-tw}. 

%%%%%%%%%%%%%%%%%%%
\begin{comment}
\begin{prop}\label{pro:ex-tw}
Let $b>0$. 
Assume that functions $f$ and $g$ satisfy (A1) and (A2). 
Then there exists a pair 
\[ (W,c) \in \{C([-b,b]) \cap C^{1,1}((-b,b))\} \times \mathbb{R} \]
satisfying \eqref{traveling eq} a.e.\ in $(-b,b)$ and \eqref{traveling bdry}. 
Moreover, the following properties hold. 
\begin{itemize}
\item If $(\tilde{W}, \tilde{c}) \in \{C([-b,b]) \cap C^{1,1}((-b,b))\} \times \mathbb{R}$ is another solution pair of \eqref{traveling eq} and \eqref{traveling bdry}, then $\tilde{c} = c$ holds and there exists a constant $a \in \mathbb{R}$ such that 
\[ \tilde{W} = W + a \quad \text{in} \; \; [-b,b]. \]
\item The speed $c$ is positive. 
\item The profile function $W$ is convex and of class $C^{\frac{\alpha-2}{\alpha-1}}([-b,b]) \cap C^2((-b,b))$. 
\end{itemize}
\end{prop}
\end{comment}
%%%%%%%%%%%%%%%%%%

\begin{proof}[Proof of Theorem \ref{thm:ex-tw}]
By the assumption (A2), we can take
\begin{equation}\label{function-G}
G(p)=\int_{-\infty}^p g(s)\, ds
\end{equation} 
for all $p\in \R$. 
We take $c >0$ satisfying 
\begin{equation}\label{condition-c}
2bf^{-1}(c)=G(\infty)=\int_{-\infty}^\infty f(s)\, ds.
\end{equation}
Noticing that $G$ is strictly increasing in $\R$, we get its inverse function $G^{-1}: (0, 2bf^{-1}(c))\to \R$. 
Note that $G^{-1}$ is integrable near the endpoints due to the condition (A1); in fact, by using the change of variable $y=G(p)$, we get
\[
\int_{0}^{G(\infty)} G^{-1}(y)\, dy=\int_{-\infty}^\infty p G'(p)\, dp =\int_{-\infty}^\infty pg(p)\, dp<\infty.
\]
We therefore can take $W$ as follows:
\[
W(x)=\int_0^x G^{-1}(f^{-1}(c)(y+b))\, dy, \quad x\in [-b, b].
\]
One can easily verify that $W$ is of class $C([-b,b]) \cap C^2((-b,b))$ % (in particular, of class $C([-b,b]) \cap C^{1,1}((-b,b))$) 
and $(W,c)$ satisfies 
\begin{equation}\label{profile-W2}
G(W_x(x))=f^{-1}(c) (x+b), \quad x\in (-b, b)
\end{equation}
with $W_x(\pm b)=\pm \infty$, which yields 
\begin{equation}\label{profile-W1}
G(W_x)_x=f^{-1}(c), \quad x \in (-b,b). 
\end{equation}
Thus, $(W,c)$ fulfill \eqref{traveling eq} and \eqref{traveling bdry} in the classical sense. 

Next, we assume that $(\tilde{W},\tilde{c}) \in \{C([-b,b]) \cap C^{0,1}((-b,b))\} \times \mathbb{R}$ is another solution pair of \eqref{traveling eq} and \eqref{traveling bdry} in the viscosity sense. 
By Lemma \ref{lem:regularity-TW}, we see that $\tilde{W}\in C^{1,1}((-b,b))$ satisfies 
\begin{equation}\label{profile-Wtilde}
G(\tilde{W}_x)_x=f^{-1}(\tilde{c}) \quad \text{a.e.\ in $(-b, b)$.}
\end{equation}
%\eqref{traveling eq} reduces to with $W$ and $c$ replaced by . 
Note that the fundamental theorem of calculus holds for $G(\tilde{W}_x)$ since the Lipschitz regularity of $G(\tilde{W}_x)$ follows from that of $\tilde{W}_x$ and $G$. 
Thus, by the boundary condition \eqref{traveling bdry}, the speeds $\tilde{c}$ also should satisfy \eqref{condition-c}, which yields $\tilde{c}=c$. 
Furthermore, taking the difference between \eqref{profile-W1} and \eqref{profile-Wtilde}  yields 
\[
\{G(W_x) - G(\tilde{W}_x)\}_x = 0 \quad \text{a.e.\ in $(-b,b)$.} 
\]
Thus, $W = \tilde{W} + m$ for some $m \in \mathbb{R}$. 

%We have proved that $c > 0$ and $W \in C^2((-b,b))$. % have already been proved. 
Let us prove the H\"{o}lder continuity of $W$ in (ii). 
Hereafter, the power $a^\beta$ for any $a,\beta \in \mathbb{R}$ should be understood as the signed power $|a|^{\beta -1}a$. 
By the condition (A2), there exist constants $M_1,M_2 > 0$ such that 
\[
 M_1 |s|^{-\alpha} \le g(s) \le M_2 |s|^{-\alpha} \] 
for all $s\in \R$ with $|s|$ sufficiently large. 
For any $x \in (-b,b)$ sufficiently close to $-b$, let 
\[ s(x) := \left(\dfrac{\alpha-1}{M_1} f^{-1}(c) (x+b)\right)^{\frac{1}{1-\alpha}}. \]
Then, we obtain by a simple calculation 
\[ G(s(x)) = \int_{-\infty}^{-s(x)} g(p) \, dp \ge \int_{-\infty}^{-s(x)} - M_1 p^{-\alpha} \, dp = f^{-1}(c) (x+b), \]
which implies by using the change of variable $f^{-1}(c)(y+b)=G(p)$ 
\[\begin{aligned} 
|W(-b) - W(x)| =&\; -\int_{-b}^x G^{-1}(f^{-1}(c)(y+b)) \, dy \le \dfrac{-1}{f^{-1}(c)} \int_{-\infty}^{-s(x)} p G'(p) \, dp \\
\le&\; \dfrac{M_2}{f^{-1}(c)} \int_{-\infty}^{-s(x)} -p^{1 - \alpha} \, dp \le M (x+b)^{\frac{\alpha-2}{\alpha-1}} 
\end{aligned}\]
for some $M > 0$ if $x$ is sufficiently close to $-b$. 
At the other boundary point, $|W(b) - W(x)| \le \tilde{M} (b-x)^{(\alpha-2)/(\alpha-1)}$ for some $\tilde{M} >0$ can be proved similarly if $x$ is sufficiently close to $b$. 
The H\"{o}lder continuity at the boundary points and $C^2$-regularity in $(-b,b)$ yields $W \in C^{\frac{\alpha-2}{\alpha-1}}([-b,b])$. 
\end{proof}

%We leave the details of the proof of Theorem \ref{thm:ex-tw} to the reader since Theorem \ref{thm:ex-tw} immediately follows from Lemma \ref{lem:regularity-TW} and Proposition \ref{pro:ex-tw}. 
%We also note that it turns out that even locally Lipschitz sub- or supersolutions coincide with the solution $W$ obtained in Theorem \ref{thm:ex-tw} up to constants. 

\begin{thm}[Uniqueness of profiles]\label{thm:profile-shift}
Let $c>0$ and $W$ be the speed and the profile function of the traveling wave obtained in Theorem \ref{thm:ex-tw}. 
Assume that $\ol{W}, \ul{W} \in C([-b,b]) \cap C^{0,1}((-b,b))$ are respectively a subsolution and a supersolution of \eqref{traveling eq} and \eqref{traveling bdry} in the viscosity sense.  
Then $\ol{W}$ and $\ul{W}$ are differentiable, and satisfy $\ol{W}_x=\ul{W}_x=W_x$ in $(-b, b)$. 
In particular, $\ol{W}$ and $\ul{W}$ are of class $C^2((-b,b))$ and coincide with $W$ in $(-b, b)$ up to constants. 
\end{thm}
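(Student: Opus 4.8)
The plan is to follow the uniqueness argument behind part~(i) of Theorem~\ref{thm:ex-tw}, but, since we are now given only one-sided viscosity inequalities, we cannot apply Lemma~\ref{lem:regularity-TW} directly; instead we work throughout with the primitive $G$ of $g$ from \eqref{function-G} and use as the main engine the \emph{a priori} bound $0<G(p)<G(\infty)=2bf^{-1}(c)$, which holds by (A1)--(A2) together with the choice of $c$ in \eqref{condition-c} (recall also $G(-\infty)=0$). \textbf{Step 1 (local one-sided regularity).} Repeating the computation in the proof of Lemma~\ref{lem:regularity-TW}: on any interval $[-b_0,b_0]\subset(-b,b)$, letting $M$ be the Lipschitz constant of $\ol W$ there and using that a superdifferential element of an $M$-Lipschitz function lies in $[-M,M]$, the subsolution inequality gives $\ol W_{xx}\ge A_1:=f^{-1}(c)/\max_{|p|\le M}g(p)>0$ in the viscosity sense, so $\ol W-A_1x^2/2$ is convex; consequently $\ol W_x$ exists off a countable set and is \emph{nondecreasing} on all of $(-b,b)$. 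Symmetrically, $\ul W-A_2x^2/2$ is concave on $[-b_0,b_0]$ with $A_2:=f^{-1}(c)/\min_{|p|\le M}g(p)>0$, so $\ul W$ is only \emph{locally} semiconcave; in particular $\ol W_x,\ \ul W_x\in BV_{\mathrm{loc}}((-b,b))$ and $\ol W,\ \ul W$ are twice differentiable a.e.

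\textbf{Step 2 (one-sided ODE for $G(\ol W_x)$, $G(\ul W_x)$).} At a.e.\ point the second-order Taylor polynomial of $\ol W$ (resp.\ $\ul W$) is an admissible test function up to $o(|x-x_0|^2)$, hence $g(\ol W_x)\ol W_{xx}\ge f^{-1}(c)$ and $g(\ul W_x)\ul W_{xx}\le f^{-1}(c)$ a.e. Since $G'=g>0$, the absolutely continuous part of $\tfrac{d}{dx}G(\ol W_x)$ equals $g(\ol W_x)\ol W_{xx}\ge f^{-1}(c)$ a.e., while its singular part is $\ge0$ because $\ol W_x$ is nondecreasing (and symmetrically for $\ul W$, using local semiconcavity). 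Therefore, as measures on $(-b,b)$,
\[
\frac{d}{dx}G(\ol W_x)\ \ge\ f^{-1}(c)\,dx,\qquad \frac{d}{dx}G(\ul W_x)\ \le\ f^{-1}(c)\,dx.
\]

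\textbf{Steps 3--4 (boundary analysis and conclusion).} For $\ol W$ this is immediate: $\ol W_x$ being nondecreasing, its limits $\ell_\pm\in[-\infty,+\infty]$ at $\pm b$ exist, and integrating the first inequality over $(-b,b)$ gives $G(\ell_+)-G(\ell_-)\ge 2bf^{-1}(c)=G(\infty)-G(-\infty)$; since always $G(\ell_+)\le G(\infty)$ and $G(\ell_-)\ge G(-\infty)$, equality must hold throughout, forcing $\ell_\pm=\pm\infty$ and, the total excess mass being zero, $\tfrac{d}{dx}G(\ol W_x)=f^{-1}(c)\,dx$. (Thus a subsolution automatically satisfies the singular Neumann condition, with no boundary hypothesis needed.) For $\ul W$ the boundary behaviour is not automatic, and here condition~(ii) of Definition~\ref{def singular stationary} enters: if $G(\ul W_x)$ does not tend to $G(\infty)$ as $x\to b^-$, then either $\ul W_x$ is essentially bounded near $b$ or $\ul W_x\to-\infty$, and in either case one constructs an explicit $C^2$ function $\phi$ with $\phi(b)=\ul W(b)$ (affine, or affine plus a small negative quadratic in $b-x$) lying below $\ul W$ near $b$, so that $\ul W-\phi$ has a local minimum at $b$ --- a contradiction; hence $G(\ul W_x)\to G(\infty)$ at $b$ and, symmetrically, $G(\ul W_x)\to G(-\infty)=0$ at $-b$, and integrating the second inequality forces $\tfrac{d}{dx}G(\ul W_x)=f^{-1}(c)\,dx$ too. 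In both cases $G(\ol W_x)$ (resp.\ $G(\ul W_x)$) is absolutely continuous on $(-b,b)$ with constant derivative $f^{-1}(c)$, and the boundary limits fix the additive constant, yielding $G(\ol W_x)=G(\ul W_x)=f^{-1}(c)(x+b)=G(W_x)$ a.e., i.e.\ \eqref{profile-W2}. Inverting the continuous strictly increasing $G$ gives $\ol W_x=\ul W_x=W_x$ a.e., and since the right-hand side is continuous this upgrades to differentiability everywhere in $(-b,b)$; as $W\in C^2((-b,b))$ by Theorem~\ref{thm:ex-tw}, we conclude $\ol W,\ \ul W\in C^2((-b,b))$ and $\ol W-W$, $\ul W-W$ are constant.

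\textbf{Main obstacle.} The delicate point is the supersolution boundary step: converting the purely negative statement in condition~(ii) into the quantitative fact $G(\ul W_x)\to G(\pm\infty)$ at $x=\pm b$, given that $\ul W$ is merely $C^{0,1}$ in the interior with $\ul W_x\in BV_{\mathrm{loc}}$ and \emph{not} monotone (unlike the subsolution case), so that no classical boundary derivative is available. The case split ($\ul W_x$ bounded versus $\ul W_x\to-\infty$ near the boundary) and the precise form of the correcting quadratic in the test function are where the argument must be carried out carefully; the rest parallels the a.e.\ integration already used in the proof of Theorem~\ref{thm:ex-tw}.
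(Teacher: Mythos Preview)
Your route is close to the paper's in spirit---both reduce to one-sided inequalities for $G(\cdot_x)$ via local semiconvexity/semiconcavity and the pointwise viscosity inequality---but the endgame differs. The paper argues by a pointwise shift: assuming $\ul W_x(x_0)<W_x(x_0)$ at one point, it finds $\bar x<x_0$ with $W_x(\bar x)=\ul W_x(x_0)$, feeds this into \eqref{eq profile-shift2}, and obtains $G(\ul W_x(y))\le G(W_x(y+\bar x-x_0))$ for $y>x_0$, which contradicts the blow-up $\ul W_x\to+\infty$ at $b$; for the subsolution $\ol W$ the analogous shift forces $\ol W_x\to-\infty$ at an \emph{interior} point, contradicting local Lipschitz continuity. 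Your argument instead integrates the measure inequality globally and squeezes against the a priori range $0<G<G(\infty)=2bf^{-1}(c)$. For the subsolution this is very clean and, as you note, shows the singular Neumann behaviour comes for free. Both approaches are valid.

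The point you flag as the ``main obstacle'' is easier than you think, and your dichotomy as stated (``$\ul W_x$ essentially bounded near $b$'' versus ``$\ul W_x\to-\infty$'') is not justified by what you wrote---without more, $\ul W_x$ could oscillate. The missing observation is that your Step~2 inequality $\frac{d}{dx}G(\ul W_x)\le f^{-1}(c)\,dx$ already says $G(\ul W_x)-f^{-1}(c)x$ is \emph{nonincreasing} on $(-b,b)$. Hence $\lim_{x\to b^-}G(\ul W_x(x))$ exists in $[0,G(\infty)]$, and your dichotomy follows: either the limit is $G(\infty)$ (done), or it lies in $(0,G(\infty))$ so $\ul W_x\to p_+<\infty$, or it is $0$ so $\ul W_x\to-\infty$. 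In the latter two cases an affine barrier $\phi(x)=\ul W(b)+K(x-b)$ with $K>p_+$ (or any $K$ in the $-\infty$ case) touches $\ul W$ from below at $b$, contradicting Definition~\ref{def singular stationary}(ii). So there is no genuine obstacle here, only the need to exploit the monotonicity that you already proved. (The paper, for its part, also asserts $\ul W_x(x_n)\to\pm\infty$ at the boundary without spelling this out; the same observation closes that gap.)
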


\begin{proof}
We first show that $\ul{W}_x=W_x$ almost everywhere in $(-b, b)$.
The Lipschitz regularity of $\ul{W}$ implies that $\ul{W}$ is differentiable almost everywhere, and the singularity boundary condition \eqref{traveling bdry} yields
\begin{equation}\label{eq profile-shift3}
\ul{W}_x(x_n)\to \pm \infty 
\end{equation}
for any sequence $\{x_n\}\subset (-b, b)$ such that $\ul{W}$ is differentiable at all $x_n$ and $x_n\to \pm b$ as $n\to \infty$. 
Also, for any $b_0\in (0, b)$, $\ul{W}$ is semiconcave in $(-b_0, b_0)$; in fact, we have $\ul{W}_{xx} \le A$ in the viscosity sense for some $A \in \mathbb{R}$ by a similar argument for \eqref{ine-Wxx}. 
Therefore $\ul{W}(x)-A x^2/2$ is concave in $[-b_0, b_0]$. 

Recall that $G$ is given by \eqref{function-G}. 
By Lemma \ref{lem:mono-integral}, $G(\ul{W}_x)$ is differentiable almost everywhere and it implies that
\[
\left(G(\ul{W}_x)\right)_x\leq f^{-1}(c) \quad a.e.\  \text{in $(-b_0, b_0)$}
\]
since $\ul{W}$ is a supersolution of \eqref{traveling eq}. 
Apply this inequality and \eqref{mono-integral} to get
\begin{equation}\label{eq profile-shift2}
G(\ul{W}_x(y))-G(\ul{W}_x(x))\leq \int_x^y \left(G(\ul{W}_x)\right)_x(s) \, ds \leq f^{-1}(c) (y-x)
\end{equation}
for all $-b < x < y <b$ where $\ul{W}$ is differentiable.

Let us now compare the behavior of $W$ and $\ul{W}$. 
Taking $x \to -b+$ in the set of differentiable points of $\ul{W}$ and applying \eqref{profile-W2} and \eqref{eq profile-shift3}, we have $G(\ul{W}_x(y)) \le G(W_x(y))$ at any differentiable point $y \in (-b,b)$ of $\ul{W}$. 
Thus, to prove that $\ul{W}_x=W_x$ in $(-b, b)$, we assume by contradiction that $\ul{W}_x(x_0)<W_x(x_0)$ for some $x_0\in (-b, b)$.
%Thus, it is sufficient to suppose that $\ul{W}_x(x_0)<W_x(x_0)$ to prove the claim by contradiction.
Noticing that $W$ is smooth and convex, we can find $\bar{x}<x_0$ such that $\ul{W}_x(x_0)=W_x(\bar{x})$. 
By \eqref{eq profile-shift2} and \eqref{profile-W2}, we thus have
\[ \begin{aligned}
G(\ul{W}_x(y)) & = (G(\ul{W}_x(y) - G(\ul{W}_x(x_0))) + G(\ul{W}_x(x_0)) \\
& \le  f^{-1}(c) (y-x_0) + f^{-1}(c)(\bar{x} + b) \\
& = f^{-1}(c)((y+\bar{x}-x_0)+b) = G(W_x(y+\bar{x}-x_0))
\end{aligned}\]
for almost every $y\in (x_0, b)$. This upper bound for $\ul{W}_x$ in $(x_0, b)$ contradicts \eqref{eq profile-shift3}. 
We thus have $\ul{W}_x(x) = W_x(x)$ for almost every $x \in (-b,b)$. 
Moreover, the monotonicity \eqref{mono-differential} holds with  $\phi(x)=\ul{W} - Ax^2/2$ and $b=b_0$ in the case that $\phi$ is concave, where $A$ and $b_0$ are the constants as above. 
These imply that $\ul{W}_x(x)$ exists and coincides with $W_x(x)$ for all $x \in (-b,b)$ since $W_x$ is continuous in $(-b,b)$. 

Our proof for the part on $\ol{W}$ is similar. In this case, instead of \eqref{eq profile-shift2} we have 
\[
G(\ol{W}_x(y))-G(\ol{W}_x(x))\geq f^{-1}(c) (y-x)
\]
at any differentiable points $x, y$ of $\ol{W}$ with $-b < x < y < b$, and instead of the singular boundary condition, we use the Lipschitz continuity in $(-b, b)$ to deduce that 
$\ol{W}_x=W_x$ in $(-b, b)$. We leave the details to the reader. 
\end{proof}

\begin{rmk}\label{rmk super conti}
We stress that under the assumptions of Theorem \ref{thm:profile-shift},  $\ul{W}=W+C$ in $[-b, b]$ for some $C\in \R$. Although by definition $\ul{W}$ is only lower semicontinuous in $[-b, b]$, $\ul{W}$ must be continuous up to the boundary for otherwise it will violate the singular boundary condition. 
Thus, the assumption $\ul{W} \in C([-b,b])$ in Theorem \ref{thm:profile-shift} can be removed. 
\end{rmk}

\begin{rmk}\label{rmk:Lpm}
Theorem \ref{thm:profile-shift} also implies that we can choose an initial value $u_0$ satisfying (A3) only if $L_- \le c \le L_+$.  
\end{rmk}

\section{Large time behavior}\label{sec:asymptotics}

In this section, we prove Theorem \ref{thm:conver-sol}.
% convergence of viscosity solution of \eqref{flow eq}--\eqref{initial} to a traveling wave obtained by Theorem \ref{thm:ex-tw} 
Let us first establish several regularity results.% \re{assuming the convexity of $u_0$.} 
%under the assumptions (A1)--(A3) with $L_- > 0$. 

%We below prove local Lipschitz continuity of the solution. 
%Note that the Lipchitz regularity can be obtained  only $L_- \ge 0$. 
\begin{prop}[Local equi-Lipschitz regularity]\label{prop:local-lip}
Let $b>0$. 
Assume that functions $f$ and $g$ satisfy (A1) and (A2). 
Assume that $u_0 \in C([-b,b])$ is convex. 
%Assume that functions $f, g$ and $u_0$ satisfy (A1), (A2) and (A3) with $L_- \ge 0$. 
Let $u$ be the unique solution of \eqref{flow eq}--\eqref{initial}. 
Then, for any $b_0 \in (0,b)$, there exists a constant $M>0$ such that 
\begin{equation}\label{local-lip} 
|u(x,t) - u(y,t)| \le M |x-y| \quad \mbox{for} \quad (x,t), (y,t) \in [-b_0, b_0] \times [0, \infty). 
\end{equation} 
\end{prop}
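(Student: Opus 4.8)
The plan is to establish the spatial Lipschitz bound on compact subintervals by combining three ingredients already available in the excerpt: the convexity of $u(\cdot,t)$ (Corollary~\ref{cor:convex}), the uniform boundedness of $u$ on finite time slabs, and the comparison with the traveling wave profile $W$ from Theorem~\ref{thm:ex-tw}. First I would fix $b_0\in(0,b)$ and use Corollary~\ref{cor:convex} to note that $u(\cdot,t)$ is convex in $[-b,b]$ for every $t\ge 0$. For a convex function, the difference quotient is monotone, so on $[-b_0,b_0]$ the one-sided derivatives satisfy
\[
u_x^-(x,t)\le u_x^+(x,t)\le u_x^-(y,t)\qquad\text{for }x<y,
\]
and hence $|u_x|$ on $[-b_0,b_0]$ is controlled by $\max\{|u_x^+(b_0,t)|,\ |u_x^-(-b_0,t)|\}$. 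It therefore suffices to bound the slope of $u(\cdot,t)$ from above at $x=b_0$ and from below at $x=-b_0$, uniformly in $t\ge 0$.

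The key step is to produce, for each $t$, a smooth comparison barrier pinching $u(\cdot,t)$ near $x=b_0$ so as to bound $u_x^+(b_0,t)$. The natural candidate is a suitably shifted and vertically translated copy of the traveling wave profile: since $W\in C([-b,b])\cap C^2((-b,b))$ is strictly convex with $W_x(\pm b)=\pm\infty$, for any constant $m$ the function $(x,t)\mapsto W(x)+m+ct$ is an exact solution of \eqref{flow eq}--\eqref{bdry}, and $u(x,t)-(W(x)+m+ct)\to -\infty$ (resp.\ $+\infty$) as $x\to b$ when $m$ is large (resp.\ small) because $W$ blows up in slope while $u$ stays bounded in finite time. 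The difficulty is that $W+m+ct$ drifts with $t$, so a single barrier cannot work for all times at once; this is the main obstacle. To get around it I would instead use that, by the finite-time boundedness \eqref{locally_bdd}-type estimate for $u$, on each slab $[-b,b]\times[0,T]$ one has $\|u\|_\infty\le C(T)$, and then insert between $u$ and the boundary a fixed convex barrier of the form $\pm K(b-|x|)^{\frac{\alpha-2}{\alpha-1}}+$const — the same blow-up exponent used in Lemma~\ref{lem regularization} and Theorem~\ref{thm:ex-tw} — whose slope at $b_0$ is finite and independent of $t$ once $K$ is chosen large enough that it lies above $u(\cdot,t)$ on $[-b,b]$ for all $t\ge 0$ (this is where one must check the barrier's operator value is $\le 0$, i.e.\ it is a supersolution, using (A1)-(A2) exactly as in \eqref{approxi-psi-I}). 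Convexity of $u(\cdot,t)$ then forces $u_x^+(b_0,t)\le$ (slope of the barrier at $b_0$), a bound uniform in $t$.

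Carrying this out symmetrically at $x=-b_0$ with the mirror-image barrier gives a uniform bound $|u_x(\cdot,t)|\le M$ on $[-b_0,b_0]$ for all $t\ge 0$, where $M$ depends only on $b_0$ (through $K$, which in turn depends only on $\sup_{[-b,b]}u_0$ and the structure constants $C_\pm,\alpha$, because the barrier needs to dominate $u$ only through the comparison principle applied on $[0,\infty)$ with the time-global ordering of data, as in the proof of Theorem~\ref{thm existence perron}). Integrating this gradient bound yields \eqref{local-lip}. I expect the main technical point to be verifying that the fixed power-type barrier is a global-in-time supersolution dominating $u$ — one must confirm both that its operator value is nonpositive near $\pm b$ (a direct computation of $F$ on $(b-|x|)^{\frac{\alpha-2}{\alpha-1}}$ as in \eqref{approxi-psi-I}) and that its constant can be raised to lie above $u_0$, after which Theorem~\ref{thm cp} propagates the ordering for all $t$ since the barrier is time-independent while $u$ need only satisfy $u_0\le$ barrier at $t=0$.
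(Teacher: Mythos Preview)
Your proposed time-independent barrier has a genuine gap on two counts. First, a stationary convex function $\psi$ with $\psi_x(\pm b)=\pm\infty$ is \emph{never} a supersolution of \eqref{flow eq}--\eqref{bdry}: since $\psi_t=0$ and $\psi_{xx}>0$, the supersolution inequality would require $-f(g(\psi_x)\psi_{xx})\ge 0$, but (A1) together with $g>0$ forces $f(g(\psi_x)\psi_{xx})>0$. The computation \eqref{approxi-psi-I} you cite in fact shows exactly this: the operator value on the power profile is a strictly \emph{positive} constant near the boundary, so that profile is a sub-, not a super-, solution. Second, even if you added a linear term $Lt$ to fix the sign, no barrier of the form $\psi(x)+Lt$ with a single $L$ could dominate $u$ for all $t\ge 0$ unless $L\ge c$; and you still need a matching lower barrier with the same drift to control the spatial oscillation. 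So the detour through a ``fixed convex barrier'' cannot close.

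The paper's argument is precisely the one you wrote down first and then dismissed. The drift of the traveling wave is harmless because it cancels in the spatial difference: by \eqref{initial dependence} one has
\[
W(x)+ct+a_-\le u(x,t)\le W(x)+ct+a_+,\qquad a_\pm=\pm\max_{[-b,b]}|u_0-W|,
\]
for all $(x,t)\in[-b,b]\times[0,\infty)$, and subtracting at two space points gives $|u(x,t)-u(y,t)|\le \tilde M$ with $\tilde M$ depending only on $\|W\|_\infty$ and $\|u_0-W\|_\infty$, uniformly in $t$. With this global oscillation bound in hand, convexity of $u(\cdot,t)$ from Corollary~\ref{cor:convex} and the monotonicity of difference quotients yield, for $-b_0\le x<y\le b_0$,
\[
-\frac{\tilde M}{b-b_0}\le \frac{u(x,t)-u(-b,t)}{x+b}\le \frac{u(y,t)-u(x,t)}{y-x}\le \frac{u(b,t)-u(y,t)}{b-y}\le \frac{\tilde M}{b-b_0},
\]
which is \eqref{local-lip} with $M=\tilde M/(b-b_0)$. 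No barrier touching argument is needed.
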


\begin{proof}
Let $(W,c)$ be the solution of \eqref{traveling eq} and \eqref{traveling bdry} obtained in Theorem \ref{thm:ex-tw}. 
Then, applying \eqref{initial dependence} together with the boundedness of $W$ and $u_0$, we can take  
\[
a_{\pm}=\pm \max_{[-b, b]} |u_0-W|
\]
%, a_+ \in \mathbb{R}$ 
such that 
\begin{equation}\label{asymptotic bound} 
W(x) + ct + a_- \le u(x,t) \le W(x) + ct + a_+ \quad \mbox{for} \quad (x,t) \in [-b, b] \times [0,\infty). 
\end{equation}
Thus, it follows again from the boundedness of $W$ that there exists a constant $\tilde{M} > 0$ such that 
\begin{equation}\label{bdd-uxuy}
|u(x,t) - u(y,t)| \le |W(x) - W(y) + a_+ - a_-| \le \tilde{M}
\end{equation}
for $x,y \in [-b,b]$ and $t \ge 0$. 
We define a constant $M>0$ as 
\begin{equation}\label{fix-M}
M := \dfrac{\tilde{M}}{(b-b_0)}. 
\end{equation}
Hence, we have by \eqref{bdd-uxuy} and \eqref{fix-M}
\begin{equation}\label{non-bdry-pt}
|u(\pm b,t) - u(y,t)| \le M |\pm b - y| \quad \mbox{for} \quad (y,t) \in [-b_0, b_0] \times [0, \infty). 
\end{equation}
Now, we take arbitrary points $x,y \in [-b_0, b_0]$ and assume $x < y$ without loss of generality. 
From Corollary \ref{cor:convex}, $u(\cdot,t)$ is convex for any $t\ge 0$ and hence a similar argument to obtain \eqref{mono-1} can be applied. 
Then, we have by \eqref{non-bdry-pt}
\[ - M \le \dfrac{u(x,t) - u(-b,t)}{x + b} \le \dfrac{u(y,t) - u(x,t)}{y-x} \le \dfrac{u(b,t) - u(y,t)}{b-y} \le M \]
for any $t \ge 0$, which is equivalent to \eqref{local-lip}. 
\end{proof}

An argument similar to Lemma \ref{lem:regularity-TW} enables us to obtain regularity in space higher than Lipschitz continuity provided that $u_0$ additionally satisfies (A3). We assume (A3) in order to obtain a time Lipschitz bound of the solution by Corollary \ref{cor:fixt}. 

\begin{prop}[Regularity improvement]\label{cor:regularity}
Let $b>0$. 
%\re{Assume that functions $f$ and $g$ satisfy (A1) and (A2). 
%Assume that $u_0 \in C([-b,b])$ is convex.} 
Assume that functions $f,g$ and $u_0$ satisfy (A1), (A2) and (A3) with $L_- \ge 0$. 
Let $u$ be the unique solution of \eqref{flow eq}--\eqref{initial}. 
Then, $u(\cdot,t) \in C^{1,1}((-b,b))$ for any $t>0$. 
\end{prop}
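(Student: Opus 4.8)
The plan is to adapt the argument of Lemma \ref{lem:regularity-TW}. There, a viscosity solution of $f^{-1}(c)=g(W_x)W_{xx}$ carrying a Lipschitz bound on $W_x$ was shown to be simultaneously semiconvex and semiconcave, hence of class $C^{1,1}$. In the present setting the analogous two–sided bound on the operator is supplied, at each fixed time, by Corollary \ref{cor:fixt}, while the required gradient bound comes from Proposition \ref{prop:local-lip}.

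In detail, I would fix $t>0$ and $b_0\in(0,b)$. By Corollary \ref{cor:fixt}, $u(\cdot,t)$ satisfies
\[
-L_+\le F(u_x,u_{xx})\le -L_-\quad\text{in }(-b,b)
\]
in the viscosity sense, equivalently $L_-\le f\big(g(u_x)u_{xx}\big)\le L_+$. Since (A3) together with $L_-\ge0$ forces $u_0$ to be convex, Proposition \ref{prop:local-lip} yields a constant $M=M(b_0)>0$ with $|u(x,t)-u(y,t)|\le M|x-y|$ for $x,y\in[-b_0,b_0]$. (Alternatively, Corollary \ref{cor:convex} already makes $u(\cdot,t)$ convex, hence locally Lipschitz.) Consequently, every $\phi\in C^2$ touching $u(\cdot,t)$ from above or from below at an interior point $x_0\in(-b_0,b_0)$ satisfies $|\phi_x(x_0)|\le M$, so that $g(\phi_x(x_0))\in[\underline g,\overline g]$, where $\underline g:=\min_{|p|\le M}g(p)>0$ and $\overline g:=\max_{|p|\le M}g(p)<\infty$ by the positivity and continuity of $g$. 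Running the usual test at $x_0$: if $\phi$ touches from above, the subsolution inequality and $f^{-1}(L_-)\ge0$ give $g(\phi_x(x_0))\phi_{xx}(x_0)\ge f^{-1}(L_-)$, hence $\phi_{xx}(x_0)\ge A_1:=f^{-1}(L_-)/\overline g\ge0$; if $\phi$ touches from below, the supersolution inequality and $f^{-1}(L_+)\ge0$ give $\phi_{xx}(x_0)\le A_2:=f^{-1}(L_+)/\underline g$. Thus $u(\cdot,t)-\tfrac{A_1}{2}x^2$ is convex and $u(\cdot,t)-\tfrac{A_2}{2}x^2$ is concave on $(-b_0,b_0)$; that is, $u(\cdot,t)$ is there both semiconvex and semiconcave.

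Since a function that is simultaneously semiconvex and semiconcave on an interval belongs to $C^{1,1}$ of that interval (cf.\ Corollary \ref{cor:regular-convex}), we obtain $u(\cdot,t)\in C^{1,1}((-b_0,b_0))$, and letting $b_0\uparrow b$ gives $u(\cdot,t)\in C^{1,1}((-b,b))$, as claimed. The one genuinely delicate point is the passage from the viscosity bound on $g(u_x)u_{xx}$ to a pointwise bound on $\phi_{xx}$: it requires localizing the coefficient $g(\phi_x(x_0))$, for which the equi-Lipschitz estimate of Proposition \ref{prop:local-lip} — and hence the convexity furnished by the hypothesis $L_-\ge0$ — is indispensable; the same hypothesis guarantees $f^{-1}(L_-)\ge0$, so that $A_1$ is a bona fide nonnegative semiconvexity constant. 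Everything else is a verbatim repetition of the scheme used for the traveling profile in Lemma \ref{lem:regularity-TW}.
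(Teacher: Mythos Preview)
Your proof is correct and follows essentially the same approach as the paper: both use Corollary \ref{cor:fixt} for the two-sided operator bound, Proposition \ref{prop:local-lip} for the local Lipschitz estimate (invoking the convexity of $u_0$ implied by $L_-\ge 0$), the argument of Lemma \ref{lem:regularity-TW} to convert these into viscosity bounds $A_1\le u_{xx}\le A_2$, and then Corollary \ref{cor:regular-convex} to conclude $C^{1,1}$ regularity. Your write-up is simply more explicit about the test-function manipulations than the paper's brief sketch.
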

\begin{proof}
Proposition \ref{prop:local-lip} implies the existence of constants $A_1 \le A_2$ depending on $b_0 \in (0,b)$ such that $A_1 \le u_{xx}(x) \le A_2$ for $x \in (-b_0,b_0)$ in the viscosity sense by applying Corollary \ref{cor:fixt} and a similar argument to obtain \eqref{ine-Wxx}. 
Then, $u(\cdot,t)$ is semiconvex and semiconcave in $(-b_0,b_0)$. Hence Corollary \ref{cor:regular-convex} can be applied. 
\end{proof}

If (A3) holds with $L_- > 0$, then we can further prove the spatial H\"older continuity of the solution up to the boundary. 

\begin{prop}[Equi-H\"{o}lder regularity]
Let $b>0$. 
Assume that functions $f,g$ and $u_0$ satisfy (A1), (A2) and (A3) with $L_- > 0$. 
Let $u$ be the unique solution of \eqref{flow eq}--\eqref{initial}. 
Then, there exists a constant $M_\alpha>0$ such that 
\[ 
|u(x,t) - u(y,t)| \le M_\alpha |x-y|^{\frac{\alpha-2}{\alpha-1}} %\mbox{for} \quad (x,t), (y,t) \in [-b, b] \times [0, \infty). \] 
\]
for all $x, y\in [-b, b]$ and $t\geq 0$.
\end{prop}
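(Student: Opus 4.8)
The plan is to adapt, to the time-dependent solution, the argument used for the profile function in the proof of Theorem~\ref{thm:ex-tw}(ii): there the H\"older bound for $W$ came from the identity $G(W_x(x))=f^{-1}(c)(x+b)$ together with the asymptotics of $g$; here I will replace that identity by the one-sided differential inequality $(G(u_x(\cdot,t)))_x\ge f^{-1}(L_-)$ extracted from Corollary~\ref{cor:fixt}. Throughout write $\beta:=\frac{\alpha-2}{\alpha-1}\in(0,1)$ and recall $G(p)=\int_{-\infty}^p g(s)\,ds$ from \eqref{function-G}, which is finite with $G(-\infty)=0$ by (A2). Since (A3) with $L_->0$ forces $u_0$ to be convex, Corollary~\ref{cor:convex} gives that $u(\cdot,t)$ is convex for every $t\ge 0$, so $u_x(\cdot,t)$ is nondecreasing on $(-b,b)$ and has monotone limits at $\pm b$; Proposition~\ref{cor:regularity} gives $u(\cdot,t)\in C^{1,1}((-b,b))$ for $t>0$, hence $u_x(\cdot,t)$ is locally Lipschitz and twice differentiable a.e.; and Corollary~\ref{cor:fixt} then yields
\[
f^{-1}(L_-)\le g(u_x(x,t))\,u_{xx}(x,t)\le f^{-1}(L_+)\qquad\text{for a.e.\ }x\in(-b,b),
\]
where $f^{-1}(L_-)>0$ because $L_->0=f(0)$ and $f^{-1}$ is increasing.

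The first main step is a uniform bound on $u_x$ near the boundary. Since $G(u_x(\cdot,t))$ is locally Lipschitz on $(-b,b)$ with a.e.\ derivative $g(u_x)u_{xx}$, integrating the lower bound gives $G(u_x(x,t))-G(u_x(y,t))\ge f^{-1}(L_-)(x-y)$ for $-b<y<x<b$; letting $y\to -b^+$ and using $G\ge 0$ we get $G(u_x(x,t))\ge f^{-1}(L_-)(x+b)$ for $x$ near $-b$, uniformly in $t$, and symmetrically $G(\infty)-G(u_x(x,t))\ge f^{-1}(L_-)(b-x)$ for $x$ near $b$. By (A2) there are constants $M_1,M_2>0$ with $M_1|s|^{-\alpha}\le g(s)\le M_2|s|^{-\alpha}$ for $|s|$ large, so $G(p)\le \tfrac{M_2}{\alpha-1}|p|^{1-\alpha}$ as $p\to-\infty$ and $G(\infty)-G(p)\le\tfrac{M_2}{\alpha-1}|p|^{1-\alpha}$ as $p\to+\infty$. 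Inverting $G$ in the two displayed inequalities yields $\rho>0$ and $C_1>0$, independent of $t$, with $|u_x(x,t)|\le C_1(b-|x|)^{-\frac{1}{\alpha-1}}$ for $b-\rho<|x|<b$ and $t>0$. On the compact part $[-b+\rho,b-\rho]$, Proposition~\ref{prop:local-lip} (applicable since $u_0$ is convex) provides $|u_x(x,t)|\le C_2$ uniformly in $t$. Hence
\[
|u_x(x,t)|\le C_1\,(b-|x|)^{-\frac{1}{\alpha-1}}+C_2\qquad\text{for all }x\in(-b,b),\ t>0.
\]

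Finally I would integrate this pointwise gradient estimate. Because $\tfrac{1}{\alpha-1}\in(0,1)$ — this is exactly where $\alpha>2$ is used — the function $(b-|s|)^{-1/(\alpha-1)}$ is integrable on $(-b,b)$, so $u(\cdot,t)$, being continuous on $[-b,b]$ and $C^1$ on $(-b,b)$, satisfies the fundamental theorem of calculus up to the boundary, and for $x<y$ in $[-b,b]$,
\[
|u(x,t)-u(y,t)|\le \int_x^y\Big(C_1(b-|s|)^{-\frac{1}{\alpha-1}}+C_2\Big)\,ds.
\]
Using $(b-|s|)^{-1/(\alpha-1)}\le (b-s)^{-1/(\alpha-1)}+(b+s)^{-1/(\alpha-1)}$, the elementary subadditivity $a^\beta-c^\beta\le (a-c)^\beta$ for $0\le c\le a$, and $y-x\le(2b)^{1-\beta}(y-x)^\beta$, the right-hand side is bounded by $M_\alpha|x-y|^\beta$ for a constant $M_\alpha$ depending only on $b,\alpha,L_-,M_2$ and the constant of Proposition~\ref{prop:local-lip}; the case $t=0$ follows by running the same argument with (A3) applied directly to $u_0$ (or simply by continuity of $u$ on $\ol{Q}$). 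The step I expect to be the main obstacle is the rigorous passage from the viscosity operator bound in Corollary~\ref{cor:fixt} to the a.e.\ pointwise inequality $f^{-1}(L_-)\le g(u_x)u_{xx}$ and to the resulting integral inequality for $G(u_x(\cdot,t))$, together with the boundary limits $G(u_x(x,t))\to 0$ and $\to G(\infty)$; both rest on the interior $C^{1,1}$ regularity (so classical and viscosity second derivatives agree a.e.) and on the convexity-driven monotonicity of $u_x(\cdot,t)$, rather than on any a priori derivative blow-up at $\pm b$. Once these one-sided ODE bounds for $u_x$ near the boundary are secured, the rest is the same integration as for $W$ in Theorem~\ref{thm:ex-tw}(ii).
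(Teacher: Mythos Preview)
Your proposal is correct and follows essentially the same route as the paper: both hinge on extracting from Corollary~\ref{cor:fixt} and Proposition~\ref{cor:regularity} the integral inequality $G(u_x(x,t))-G(u_x(y,t))\ge f^{-1}(L_-)(x-y)$, then using the asymptotics of $g$ from (A2) near the boundary together with Proposition~\ref{prop:local-lip} in the interior. The only difference is cosmetic: the paper packages the boundary estimate by comparing $u_x$ with the derivative of a rescaled traveling-wave profile $W_+(x)=c_+W(-b+(x+b)/c_+)$ (chosen so that $g((W_+)_x)(W_+)_{xx}=f^{-1}(L_-)$) and then invokes the already-established H\"older regularity of $W$, whereas you invert $G$ directly to get the pointwise gradient bound $|u_x(x,t)|\le C_1(b-|x|)^{-1/(\alpha-1)}+C_2$ and integrate; your version is slightly more self-contained, the paper's reuses Theorem~\ref{thm:ex-tw}(ii) more efficiently. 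Your flagged ``obstacle''---passing from the viscosity bound to the a.e.\ inequality and integrating---is handled exactly as you indicate, and the paper does the same without further comment.
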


\begin{proof}
Let $(W, c)$ be the solution of \eqref{traveling eq} and \eqref{traveling bdry} obtained in Theorem \ref{thm:ex-tw}.
Choose $c_\pm \in \mathbb{R}$ so that $0 < c_- < 1 < c_+$, $g((W_+)_x)(W_+)_{xx} = f^{-1}(L_-)$ and $g((W_-)_x)(W_-)_{xx} = f^{-1}(L_+)$, where 
\[ \begin{aligned}
&W_+(x) := c_+ W(-b + (x+b)/c_+) \quad \text{for} \quad x \in [-b, b], \\
&W_-(x) := c_- W(-b + (x+b)/c_-) \quad \text{for} \quad x \in [-b, 2c_- b - b]. 
\end{aligned} \]
Note that we can choose such sufficiently large $c_+$ and small $c_-$ since $g((W_\pm)_x)(W_\pm)_{xx} = f^{-1}(c)/c_\pm$. 
See also Remark \ref{rmk:Lpm} for the order $L_- \le c \le L_+$. 
From Corollary \ref{cor:fixt} and Proposition \ref{cor:regularity}, we have 
\[ \begin{aligned}
G(u_x(x,t)) - G(u_x(y,t)) =&\; \int_y^x (G \circ u_x)_x(z,t) \; dz \\
\ge&\; f^{-1}(L_-)(x-y) = G((W_+)_x(x)) - G((W_+)_x(y)) 
\end{aligned}\]
for $-b < y < x < b$ and $t \ge 0$. 
Letting $y \to -b$, we have by $u_x(y,t) \to -\infty$ and the monotonicity of $G$
\[ u_x(x,t) \ge (W_+)_x(x) \quad \text{for} \quad -b < x < b, \; \; t \ge 0. \]
Integrating the inequality in $[-b + \varepsilon, x]$ and letting $\varepsilon \to 0$, we obtain by the continuity of $u(\cdot,t)$ 
\begin{equation}\label{holder1}
u(-b,t) - u(x,t) \le W_+(-b) - W_+(x) \quad \text{for} \quad -b < x < b, \; \; t \ge 0. 
\end{equation}
Similary, we also obtain
\begin{equation}\label{holder2}
u(-b,t) - u(x,t) \ge W_-(-b) - W_-(x) \quad \text{for} \quad -b < x < 2c_- b - b, \; \; t \ge 0. 
\end{equation}
It follows from \eqref{holder1}, \eqref{holder2}, the H\"{o}lder continuity of $W$ and the choices of $c_\pm$ that there exists a constant $\tilde{M}_\alpha>0$ such that
\[ |u(-b,t) - u(x,t)| \le \tilde{M}_\alpha |x+b|^{\frac{\alpha-2}{\alpha-1}} \quad \mbox{for} \quad (x,t) \in [-b, 2c_- b - b] \times [0,\infty). \]
By a similar argument we can prove that 
\[ |u(b,t) - u(x,t)| \le \tilde{M}_\alpha |b-x|^{\frac{\alpha-2}{\alpha-1}} \quad \mbox{for} \quad (x,t) \in [b-2c_- b, b] \times [0,\infty). \]
The uniform H\"{o}lder continuity of $u(\cdot,t)$ follows from the H\"{o}lder continuity near the boundary points and the local Lipschitz continuity in $(-b,b)$. 
\end{proof}

Under the assumptions in Theorem \ref{thm:conver-sol}, due to \eqref{asymptotic bound}, for the function 
\[
w(x, t):=u(x, t)-ct, \quad (x, t)\in [-b, b]\times [0, \infty),
\]
we can define at every $x\in [-b, b]$ the following relaxed limits as $t\to \infty$:
\begin{equation}\label{profile limits}
\begin{aligned}
\ol{W}(x)&=\limsups_{t\to \infty} w(x, t)=\lim_{\vep\to 0+} \sup\{w(y, s): y\in [-b, b],\ |x-y|\leq \vep, \ s\geq 1/\vep\},\\
\ul{W}(x)&=\liminfs_{t\to \infty} w(x, t)=\lim_{\vep\to 0+} \inf\{w(y, s): y\in [-b, b],  |x-y|\leq \vep, \ s\geq 1/\vep\}.
\end{aligned}
\end{equation}

Adopting similar stability arguments in the proof of Theorem \ref{thm stability}, we show the following result. 
\begin{prop}[Sub- and supersolution properties of relaxed limits]\label{prop:profile-sol}
Let $b>0$. 
Assume that functions $f,g$ and $u_0$ satisfy (A1), (A2) and (A3) with $L_- > 0$. 
Let $u$ be the unique solution of \eqref{flow eq}--\eqref{initial}. 
Then, the relaxed limits $\ol{W}$ and $\ul{W}$ given as in \eqref{profile limits} are respectively a subsolution and a supersolution of \eqref{traveling eq} and \eqref{traveling bdry} with the speed $c$ of the traveling wave obtained in Theorem \ref{thm:ex-tw}. 
\end{prop}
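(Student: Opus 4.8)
The plan is to run the relaxed half-limit (stability) argument of Theorem \ref{thm stability}, but now letting $t\to\infty$ in place of a vanishing parameter. First I would record two elementary reductions. Writing $w(x,t):=u(x,t)-ct$ with $c$ the speed from Theorem \ref{thm:ex-tw}, and noticing that $(x,t)\mapsto ct$ is smooth and does not interfere with boundary testing, $w$ is a viscosity solution of
\[ w_t+c+F(w_x,w_{xx})=0 \quad\text{in } Q, \]
and $w$ still satisfies the singular Neumann condition \eqref{bdry}. Moreover, by \eqref{asymptotic bound} the family $\{w(\cdot,t)\}_{t\geq0}$ is uniformly bounded on $[-b,b]$, so $\ol{W}$ and $\ul{W}$ are finite-valued (and, by the equi-H\"older bound established earlier in this section, even H\"older continuous on $[-b,b]$); $\ol{W}$ is automatically upper semicontinuous and $\ul{W}$ lower semicontinuous. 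It then remains to verify the viscosity inequalities of Definition \ref{def singular stationary}.

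For the interior, let $\phi\in C^2([-b,b])$ be such that $\ol{W}-\phi$ has a strict local maximum at $x_0\in(-b,b)$, say over $[x_0-r,x_0+r]\subset(-b,b)$. By definition of the relaxed upper limit there are $y_k\to x_0$ and $s_k\to\infty$ with $w(y_k,s_k)\to\ol{W}(x_0)$. For each $k$ I would pick a maximizer $(x_k,t_k)$ of $(x,t)\mapsto w(x,t)-\phi(x)-|t-s_k|^2$ over $[x_0-r,x_0+r]\times[s_k-1,s_k+1]$; comparing the maximal value with the value at $(y_k,s_k)$ and using the definition of $\ol{W}$ forces $x_k\to x_0$ and $t_k-s_k\to0$, so that $(x_k,t_k)$ is interior for $k$ large. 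Applying the subsolution property of $w$ with test function $(x,t)\mapsto\phi(x)+|t-s_k|^2$ gives
\[ 2(t_k-s_k)+c+F\bigl(\phi_x(x_k),\phi_{xx}(x_k)\bigr)\leq0, \]
and letting $k\to\infty$ yields $c+F(\phi_x(x_0),\phi_{xx}(x_0))\leq0$, the required inequality for $\ol{W}$. The interior supersolution inequality for $\ul{W}$ is proved in exactly the same manner, with ``maximum'' replaced by ``minimum'', the quadratic penalty entering so that its contribution to the $t$-derivative again vanishes in the limit, and the supersolution property of $w$ used in place of the subsolution one.

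Since a subsolution of the stationary problem carries no boundary requirement in Definition \ref{def singular stationary}, this completes the verification for $\ol{W}$. For $\ul{W}$ I must rule out $C^2([-b,b])$ test functions touching $\ul{W}$ from below at $x=\pm b$; I treat $x=b$, the point $x=-b$ being identical. Suppose for contradiction $\ul{W}-\phi$ has a local minimum at $b$ for some $\phi\in C^2([-b,b])$. As in the proof of Proposition \ref{prop existence truncated}, replacing $\phi$ by
\[ \phi_\delta(x):=\phi(x)+\delta(x-b)+\tfrac{1}{\delta}(x-b)^2 \]
makes the minimum strict over some $[b-\rho,b]$; choosing $y_k\to b$, $s_k\to\infty$ with $w(y_k,s_k)\to\ul{W}(b)$ and minimizing $(x,t)\mapsto w(x,t)-\phi_\delta(x)+|t-s_k|^2$ over $[b-\rho,b]\times[s_k-1,s_k+1]$ produces a point $(x_k,t_k)$ with $x_k\to b$ and $t_k-s_k\to0$. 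Since $w$ satisfies the singular boundary condition \eqref{bdry}, $x_k\neq b$, so $(x_k,t_k)$ is interior and the supersolution property of $w$ gives
\[ -2(t_k-s_k)+c+F\Bigl(\phi_x(x_k)+\delta+\tfrac{2}{\delta}(x_k-b),\ \phi_{xx}(x_k)+\tfrac{2}{\delta}\Bigr)\geq0. \]
Letting $k\to\infty$ gives $c+F\bigl(\phi_x(b)+\delta,\ \phi_{xx}(b)+\tfrac{2}{\delta}\bigr)\geq0$, and then $\delta\to0$, using that $g>0$ is continuous while the second argument of $F$ tends to $+\infty$ (so $F\to-\infty$ by (A1)), gives a contradiction. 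Hence $\ul{W}$ is a supersolution of \eqref{traveling eq}--\eqref{traveling bdry}.

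The main obstacle is the time localization in the two limit passages: one has to choose the quadratic penalty and the compact time window so that the extremal points $(x_k,t_k)$ stay interior in $t$ and the penalty term $2(t_k-s_k)$ vanishes in the limit. This is precisely where the uniform bound \eqref{asymptotic bound} on $\{w(\cdot,t)\}$ and the semicontinuity of the relaxed limits enter; everything else is a routine adaptation of the Perron and stability arguments already carried out for the parabolic equation.
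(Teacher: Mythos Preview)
Your proposal is correct and follows essentially the same stability-with-time-going-to-infinity argument as the paper: localize near an interior test point with a quadratic time penalty, extract an approximating interior maximizer/minimizer whose time-penalty derivative vanishes in the limit, and for the boundary use the perturbation $\phi_\delta$ together with the singular Neumann condition of $w$ to reach a contradiction as $\delta\to0$. The only cosmetic difference is that you work on a fixed compact time window $[s_k-1,s_k+1]$ with penalty coefficient $1$, whereas the paper minimizes $\Phi(x,t)=w(x,t)-\phi(x)+\varepsilon(t-s_\varepsilon)^2$ over all of $\overline{Q}$ with a vanishing coefficient $\varepsilon$ and shows $\varepsilon|t_\varepsilon-s_\varepsilon|\to0$; both devices serve the same purpose of killing the time-derivative term.
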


\begin{proof}
Suppose that there exist $\phi\in C^2([-b, b])$ and $x_0\in (-b, b)$ such that $\ul{W}-\phi$ attains a strict minimum in $[-b, b]$ at $x_0$. Then by definition there exists $(y_\vep, s_\vep)\in (-b, b)\times [1/\vep, \infty)$ such that $y_\vep\to x_0$ as $\vep\to 0$ and
\[
w(y_\vep, s_\vep)-\phi(y_\vep)\leq \ul{W}(x_0)-\phi(x_0)+\omega(\vep),
\]
where $\omega$ is a modulus of continuity. Letting
\[
\Phi(x, t):=w(x, t)-\phi(x)+\vep(t-s_\vep)^2,
\]
we have
\[
\inf_{(x, t)\in Q}\Phi(x, t)\leq \Phi(y_\vep, s_\vep)\leq \ul{W}(x_0)-\phi(x_0)+\omega(\vep),
\]
where we recall that $Q=(-b, b)\times (0, \infty)$. It follows that $\Phi$ attains a minimum in $\overline{Q}$ at some $(x_\vep, t_\vep)\in (-b, b)\times (0, \infty)$ satisfying $x_\vep\to x_0$ as $\vep\to 0$. Indeed, since $w(x, t)-\phi(x)$ is bounded in $\ol{Q}=[-b, b]\times [0, \infty)$,  we get
\[
\inf_{x\in [-b, b]}\Phi(x, 0)\geq \inf_{x\in [-b, b]} (w(x, 0)-\phi(x))+1/\vep> \ul{W}(x_0)-\phi(x_0)+\omega(\vep)
\]
when $\vep>0$ is taken sufficiently small and also have
\[
\inf_{x\in [-b, b]}\Phi(x, t)\to \infty\quad \text{as $t\to \infty$.}
\]
We thus can find $(x_\vep, t_\vep)\in [-b, b]\times (0, \infty)$ such that 
\[
\min_{\overline{Q}} \Phi=\Phi(x_\vep, t_\vep),
\]
which implies that $t_\vep\to \infty$ as $\vep\to 0$ and
\begin{equation}\label{eq profile-sol1}
\Phi(x_\vep, t_\vep)\leq \Phi(y_\vep, s_\vep)\leq \ul{W}(x_0)-\phi(x_0)+\omega(\vep).
\end{equation}
By taking a subsequence, we have $x_\vep\to \hat{x}$ as $\vep\to 0$. It follows that 
\[
\ul{W}(\hat{x})-\phi(\hat{x})\leq \liminf_{\vep\to 0} \Phi(x_\vep, t_\vep)\leq \ul{W}(x_0)-\phi(x_0).
\]
Noticing that $\ul{W}-\phi$ attains a strict a minimum at $x_0$, we have $\hat{x}=x_0$, which means that $x_\vep\to x_0$ as $\vep\to 0$. In particular, $x_\vep\neq \pm b$ when $\vep>0$ is small. 

Moreover, \eqref{eq profile-sol1} yields
\[
\limsup_{\vep\to 0}\vep(t_\vep-s_\vep)^2\leq \ul{W}(x_0)-\liminf_{\vep\to 0}w(x_\vep, t_\vep)\leq 0,
\] 
which further implies that 
\begin{equation}\label{eq profile-sol2}
\vep|t_\vep-s_\vep|\to 0 \quad \text{as} \; \; \vep\to 0.
\end{equation}
Since 
\[
(x, t)\mapsto u(x, t)-ct+\vep(t-s_\vep)^2-\phi(x)
\]
has a minimum at $(x_\vep, t_\vep)$, applying Definition \ref{def singular}, we obtain
\[
c-2\vep(t_\vep-s_\vep)+F(\phi_x(x_\vep),\ \phi_{xx}(x_\vep))\geq 0.
\]
Letting $\vep\to 0$ and adopting \eqref{eq profile-sol2}, we get
\[
c+F(\phi_x(x_0),\ \phi_{xx}(x_0))\geq 0
\]
as desired. 

We next verify the singular boundary condition. We show that $\ul{W}$ cannot be tested from below by $C^2$ functions at $x=\pm b$. Assume by contradiction that there exists $\phi\in C^2([-b, b])$ such that $\ul{W}(x)-\phi(x)$ attains a minimum at $x=b$.  (The case when the minimum attains at $x=-b$ can be treated in a symmetric way.) We slightly change the test function by setting 
\[
\phi_\delta(x)=\phi(x)+\delta(x-b)+{1\over \delta}(x-b)^2 
\]
with $\delta>0$ for all $x\in [-b, b]$. It is not difficult to see that for each $\delta>0$, $\ul{W}-\phi_\delta$  attains a strict local minimum at $b$. 

We can follow the same argument as in the interior case above (with $x_0$ replaced by $b$) to find, for each $\vep>0$,  a local minimizer $(x_\vep, t_\vep)$ of 
\[
(x, t)\mapsto u(x, t)-ct-\phi_\delta(x)+\vep(t-s_\vep)^2
\]
for some $s_\vep\geq 1/\vep$. Also, we still have $x_\vep\to b$ and \eqref{eq profile-sol2} as $\vep\to 0$. However, by Definition \ref{def singular} we get $x_\vep\neq b$, since $w$ satisfies the singular boundary condition. Applying the definition of supersolutions again and letting $\vep\to 0$, we deduce that
\[
c+F\left(\phi_x(b)+\delta,\ \phi_{xx}(b)+{2\over \delta}\right)\geq 0.
\]
Sending $\delta\to 0$, we obtain a contradiction due to the fact that
\[
F\left(\phi_x(b)+\delta,\ \phi_{xx}(b)+{2\over \delta}\right)\to -\infty\quad \text{as $\delta\to 0$.}
\]

We have completed the verification of the supersolution $\ul{W}$. We omit the analogous proof for $\ol{W}$. In fact, there is no need to consider the boundary condition for subsolutions and we only need to apply the interior argument above in a symmetric manner. 
\end{proof}

For our later application, below we prepare a strong comparison principle with smooth sub- or supersolutions by adapting the arguments in \cite{Dal}.  

\begin{prop}[Strong comparison principle]\label{prop strong cp}
Let $b>0$. 
Assume that functions $f, g$ satisfy (A1) and (A2). 
Assume also $f^{-1}$ is Lipschitz away from $s=0$ and $g$ is Lipschitz in $\mathbb{R}$. 
Let $u$ and $v$ be respectively a sub- and a supersolution of \eqref{flow eq}. 
Assume that either $u$ or $v$ is of class $C^2(Q)$ with its time derivative uniformly bounded away from $0$. 
If there exists $(x_0, t_0)\in Q$ such that $u-v$ attains a maximum at $(x_0, t_0)$, then $u(\cdot, t_0)-v(\cdot, t_0)$ is constant in $(-b, b)$. 
\end{prop}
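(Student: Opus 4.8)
The plan is to transform the equation by $f^{-1}$ and then adapt the strong maximum principle argument of \cite{Dal}. First I would reduce to the case in which the supersolution $v$ is the one of class $C^2(Q)$ with $\inf_Q|v_t|>0$: if instead $u$ is the smooth one, apply the statement to $\tilde u:=-v$ and $\tilde v:=-u$, which are respectively a sub- and a supersolution of \eqref{flow eq} with $f$ and $g$ replaced by $\tilde f(s):=-f(-s)$ and $\tilde g(p):=g(-p)$ — these still satisfy (A1) and (A2), $\tilde f^{-1}$ is Lipschitz away from $0$ and $\tilde g$ is Lipschitz, $\tilde v\in C^2(Q)$ with $\tilde v_t=-u_t$ bounded away from $0$, and $\tilde u-\tilde v=u-v$ has the same maximum point $(x_0,t_0)$. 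Since $v_t$ is continuous and bounded away from $0$ on the connected set $Q$ it keeps a fixed sign, so its values stay in a range on which $f^{-1}$ is Lipschitz. Applying the increasing function $f^{-1}$ (which exists by (A1)) to the viscosity inequalities $u_t\le f(g(u_x)u_{xx})$ and $v_t\ge f(g(v_x)v_{xx})$, I may regard $u$ and $v$ as a sub- and a supersolution, respectively, of the transformed operator
\[
\mathcal G[\psi]:=f^{-1}(\psi_t)-g(\psi_x)\psi_{xx},
\]
which is degenerate parabolic and proper, with $v$ a classical supersolution.

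Next I would localize and reduce to a propagation statement. Set $M:=u(x_0,t_0)-v(x_0,t_0)=\max_Q(u-v)$ and
\[
A:=\{x\in(-b,b):u(x,t_0)-v(x,t_0)=M\}.
\]
Since $u-v$ is upper semicontinuous and $\le M$, the set $A$ is closed in $(-b,b)$, and it is nonempty because $x_0\in A$; hence it suffices to show that $A$ is open, as then $A=(-b,b)$ by connectedness, which is the claim. Fixing $\bar x\in A$, the point $(\bar x,t_0)$ is again a maximum point of $u-v$ over $Q$, and what remains is the local propagation assertion that $u(\cdot,t_0)-v(\cdot,t_0)\equiv M$ in a neighborhood of $\bar x$.

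For the local propagation I would argue by contradiction with a Hopf-type barrier, adapting \cite{Dal}. If the assertion fails, upper semicontinuity produces a point $\bar p\in Q$ with $u(\bar p)-v(\bar p)<M$ lying arbitrarily close to a maximum point of $u-v$, and one fits a small space-time region $\mathcal R\Subset Q$ to this configuration. On $\mathcal R$ the smoothness of $v$ provides uniform bounds on $v_x,v_{xx},v_t$ and, crucially, $g(v_x)\ge c_0>0$ by the positivity in (A2); combined with the Lipschitz bounds on $f^{-1}$ near the range of $v_t$ and on $g$, this lets the second-order term $-g(v_x)\psi_{xx}$ dominate, so that for a classical barrier $\chi$ of exponential type and a suitably large parameter, $v+M-\theta\chi$ is a strict supersolution of $\mathcal G$ on $\mathcal R$ that lies above $u$ on the parabolic part of $\partial\mathcal R$. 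A standard sliding argument in $\theta$ then produces an interior point of $\mathcal R$ at which $u$ is touched from above by this strict supersolution, contradicting the subsolution inequality for $u$; carrying this out for slices $t\le t_0$ and using that $(\bar x,t_0)$ realizes the global maximum of $u-v$ over $Q$ forces $u(\cdot,t_0)-v(\cdot,t_0)\equiv M$ near $\bar x$. I expect this barrier construction to be the main obstacle: the geometry of $\mathcal R$ and $\chi$ must be arranged so that the forced contact is at an interior point and the information propagates to the whole slice $\{t=t_0\}$, and the nonlinearities $f^{-1}(\cdot)$ and $g(\cdot)$ must be absorbed into $-g(v_x)\psi_{xx}$ — which is precisely why one freezes near the smooth supersolution $v$ (so that $g(v_x)$ stays bounded below away from $0$ and $f^{-1}$ is Lipschitz, the role of the hypothesis $\inf_Q|v_t|>0$) and why the Lipschitz assumptions on $f^{-1}$ and $g$ are used. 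Once $A$ is shown to be open it equals $(-b,b)$, completing the proof.
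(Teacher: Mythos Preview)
Your proposal is correct and follows essentially the same approach as the paper: both reduce to the case where $v\in C^2(Q)$ with $|v_t|\ge\mu>0$, transform via $f^{-1}$, and then run a Hopf-type barrier argument \`a la \cite{Dal} using an exponential bump, exploiting the Lipschitz continuity of $f^{-1}$ away from $0$ and of $g$ together with the positivity of $g$ to make the second-order term dominate. The paper carries out the barrier construction concretely---it places an ellipse $\{(x-x_\delta)^2+a(t-t_0)^2<r^2\}$ tangent to the contact set, takes $\psi^\vep(x,t)=\vep e^{-\gamma(x-x_\delta)^2-a\gamma(t-t_0)^2}$ with $\vep\le 1/\gamma^2$, and derives the contradiction by a direct estimate showing $-g(v_x)\psi^\vep_{xx}$ beats the lower-order errors for $\gamma$ large---whereas you package the same idea more abstractly via the open--closed argument on the set $A$ and a sliding barrier $v+M-\theta\chi$; these are equivalent formulations of the same mechanism.
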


\begin{proof}
Let $z_0=(x_0, t_0)$. Suppose that $u-v$ attains a maximum at $z_0$. Let us assume that $v\in C^2(Q)$ and 
\beq\label{strong cp3}
|v_t|\geq \mu \quad \text{in $Q$}
\eeq
for some $\mu>0$. The case when $u$ satisfies the same conditions can be similarly handled. Moreover, without loss of generality we may assume that there exists $\delta>0$ small such that $u(x, t_0)-v(x, t_0)< u(z_0)-v(z_0)$ for all $x\in (x_0, x_0+2\delta)$ with $x_0+2\delta<b$. 
Let $x_\delta=x_0+\delta$ and $z_\delta=(x_\delta, t_0)$.
 Then there exists $r\in (0, \delta]$ and an ellipse 
\[
\O_r:=\{(x, t)\in Q:  (x-x_\delta)^2+a(t-t_0)^2< r^2\} 
\]
with $a>1$ such that %$\hat{z}\in \partial \O_r(z_\delta)$ such that 
\[
u-v< u(z_0)-v(z_0)\quad \text{in $\O_r(z_\delta)$}
\]
and 
\beq\label{strong cp7}
u(\hat{z})-v(\hat{z})=u(z_0)-v(z_0),
\eeq
for some $\hat{z}=(\hat{x}, \hat{t})\in \partial \O_r$. Moreover, by choosing $a>1$ sufficiently large, we can let all such $\hat{z}$ satisfy 
\beq\label{strong cp5}
|\hat{x}-x_\delta|\geq {2r\over 3}.
\eeq
Fix a large constant $\gamma>0$ to be determined later and arbitrarily take $0<\vep\leq 1/\gamma^2$. We consider 
\[
\Psi_\vep(x, t)=u(x, t)-v(x, t)+\psi^\vep(x, t)
\]
for $(x, t)\in Q$, where $\psi^\vep$ is given by 
\[
\psi^\vep(x, t)=\vep e^{-\gamma(x-x_\delta)^2-a\gamma(t-t_0)^2}.
\]
It is not difficult to see that, for any $\vep>0$ small, 
$\Psi_\vep$ attains a local maximum at some $z_\vep=(x_\vep, t_\vep)\in Q$ near 
some $\hat{z}\in \partial \O_r$ fulfilling \eqref{strong cp7}. Let us take a subsequence, still indexed by $\vep$ for simplicity, and assume that $z_\vep\to \hat{z}$ as $\vep\to 0$. 
As a consequence, for any $\vep>0$ small we have 
\beq\label{strong cp6}
\max\{|x_\vep-x_\delta|,\  |t_\vep-t_0|\}\leq \delta
\eeq
and by \eqref{strong cp5}, 
\beq\label{strong cp4}
|x_\vep-x_\delta|\geq {r\over 2}.
\eeq
Since $u$ and $v$ are respectively a sub- and a supersolution of \eqref{flow eq}, we have 
\beq\label{strong cp1}
f^{-1}(v_t(z_\vep)-\psi^\vep_t(z_\vep))\leq g\left(v_x(z_\vep)-\psi^\vep_x(z_\vep)\right)(v_{xx}(z_\vep)-\psi^\vep_{xx}(z_\vep)),
\eeq
\beq\label{strong cp2}
f^{-1}(v_t(z_\vep))\geq g\left(v_x(z_\vep)\right)v_{xx}(z_\vep).
\eeq
By direct computation, we have
\[
\begin{aligned}
\psi^\vep_t(z_\vep)&=-2 a\gamma (t_\vep-t_0) \psi^\vep(z_\vep),\\
\psi^\vep_x(z_\vep)&=-2\gamma (x_\vep-x_0)\psi^\vep(z_\vep),\\
\psi^\vep_{xx}(z_\vep)& =-2\gamma \psi^\vep(z_\vep)+4\gamma^2 (x_\vep-x_0)^2\psi^\vep(z_\vep). 
\end{aligned}
\]
Note that 
\[
|\psi_{xx}^\vep(z_\vep)|\leq C_0
\]
holds for some $C_0>0$ depending on $\delta>0$, thanks to \eqref{strong cp6} and the relation that $\vep<1/\gamma^2$.
Similary, we can also see that
\[ |\psi_t^\vep(z_\vep)| \le \mu/2 \]
for sufficiently large $\gamma$, thanks to \eqref{strong cp6} and the relation that $\vep<1/\gamma^2$. 
In view of \eqref{strong cp3}, by using the Lipschitz regularity of $f^{-1}$ away from $s=0$, we have
\[
f^{-1}(v_t(z_\vep)-\psi^\vep_t(z_\vep))-f^{-1}(v_t(z_\vep)) \geq 
-C_1 \gamma \psi^\vep(z_\vep),
\]
where $C_1>0$ depends on $a$, $\alpha$, $\delta$ and $\mu$.  
Also, the Lipschitz continuity of $g$ implies the existence of $C_2>0$ depending on $\alpha$, $b$ and $|v_x(z_\vep)|$ such that
\[
\left|g\left(v_x(z_\vep)-\psi^\vep_x(z_\vep)\right)-g\left(v_x(z_\vep)\right) \right|
\leq C_2\gamma \psi^\vep(z_\vep).
\]
Taking the difference between \eqref{strong cp1} and \eqref{strong cp2}, we thus get
\[
\begin{aligned}
-C_1\gamma \psi^\vep (z_\vep)&\leq \left|g\left(v_x(z_\vep)-\psi^\vep_x(z_\vep)\right)-g\left(v_x(z_\vep)\right) \right| \left|v_{xx}(z_\vep)-\psi^\vep_{xx}(z_\vep)\right|
-g(v_x(z_\vep))\psi_{xx}^\vep(z_\vep)\\
&\leq C_2\gamma \psi^\vep(z_\vep)( |v_{xx}(z_\vep)|+C_0)-g\left(v_x(z_\vep)\right)
 \psi^\vep_{xx}(z_\vep)\\
&\leq C\gamma \psi^\vep(z_\vep)-{1\over C}\gamma^2 (x_\vep-x_0)^2\psi^\vep(z_\vep),
\end{aligned}
\]
where $C>0$ is a large constant depending only on $\alpha$, $b$, $C_0$ as well as the local bound of $|v_x|$ and $|v_{xx}|$ near $\hat{z}$. 
Due to \eqref{strong cp4}, the estimate above yields
\[
-C_1  \leq C-{r^2\over 4C}\gamma, 
\]
which is a contradiction if we take 
\[
\gamma>{4C(C_1+C)\over r^2}.
\]
Our proof is thus complete. 
\end{proof}

The lower bounded for the evolution speed of $u$ or $v$ seems to be a necessary condition, since the parabolic degeneracy of \eqref{flow eq} occurs exactly at $u_t=0$. 

We finally proceed to the proof of Theorem \ref{thm:conver-sol}.

\begin{proof}[Proof of Theorem \ref{thm:conver-sol}]
We first prove the convergence result for $u_0$ satisfying (A3) with $L_->0$. We later handle the case with general initial data by approximation. 

For any $t\geq 0$, let 
\[
\begin{aligned}
 \ol{\eta}(t) &= \max_{x \in [-b,b]} \left\{u(x, t) - W(x) - ct\right\},\\
\ul{\eta}(t) &= \min_{x \in [-b,b]} \left\{u(x, t) - W(x) - ct\right\}.
\end{aligned}
\]
Applying the comparison principle in Theorem \ref{thm cp}, we see that $\ol{\eta}$ is non-increasing while $\ul{\eta}$ is non-decreasing in $[0, \infty)$. As a result, we have 
\[
\ol{\eta}(t)\to \ol{m}, \quad \ul{\eta}(t)\to \ul{m}
\]
as $t\to \infty$. In order to prove the desired convergence result, we only need to show that $\ol{m}=\ul{m}$. \\
\mbox{}\\
If we view $u(x, t+s)-W(x)-ct-cs$ as a family of functions of $(x, t)$ in $\ol{Q}$ parametrized by $s\geq 0$, then it is not difficult to see that it is uniformly bounded and H\"{o}lder continuous in $\ol{Q}$. By the Arzel\`a-Ascoli theorem, we can find a sequence $s_k>0$ and $w\in C(\ol{Q})$ such that 
\[
u(x, t+s_k)-c(t+s_k)\to w(x, t)
\]
uniformly in $\ol{Q}_T$ as $k\to \infty$ for any $T>0$.  In addition, $w(\cdot, t)$ is H\"{o}lder continuous in $[-b, b]$ and locally Lipschitz in $(-b, b)$ uniformly for all $t>0$.
By Theorem \ref{thm stability}, we can show that $v(x, t)=w(x, t)+ct$ is a solution of \eqref{flow eq} and \eqref{bdry}. Moreover, sending $k\to \infty$ for $\ol{\eta}(t+s_k)$ and $\ul{\eta}(t+s_k)$ leads to
\[
\begin{aligned}
\ol{m}&= \max_{x \in [-b,b]} \left\{w(x, t)- W(x) \right\},\\
\ul{m}&=\min_{x \in [-b,b]} \left\{w(x, t)- W(x) \right\},
\end{aligned}
\] 
for all $t> 0$. 

Suppose that there exists $(x_0, t_0)\in Q$ such that
\[
\ol{m}= w(x_0, t_0)- W(x_0)=v(x_0, t_0)-ct_0-W(x_0). 
\]
Since $W(x)+ct$ is a smooth solution of \eqref{flow eq} and \eqref{bdry} with its time derivative being $c>0$, by Proposition \ref{prop strong cp} we deduce that $w(\cdot, t_0)-W$ is a constant in $[-b, b]$ and therefore $\ol{m}=\ul{m}$. 
We can use the same argument to prove $\ol{m}=\ul{m}$ if the minimum for $\ul{m}$ is attained 
in $Q$.

We finally consider the case when $\ol{m}$ and $\ul{m}$ are only attained at $\{\pm b\}$ for any $t>0$. %then we may turn to use Theorem \ref{thm:profile-shift} to $\ol{m}=\ul{m}$. 
Assume by contradiction that $\ol{m}>\ul{m}$. By continuity of $w$, without loss of generality we may assume that
\beq\label{convergence bdry1}
\begin{aligned}
\ol{m}& = w(b, t)-W(b),\\
\ul{m}&= w(-b, t)-W(-b) 
\end{aligned}
\eeq
for all $t>0$. In view of Proposition \ref{prop:profile-sol}, we see that $\ul{W}$ given in  \eqref{profile limits} is a supersolution of \eqref{traveling eq} and \eqref{traveling bdry}. 
Hence, by \eqref{convergence bdry1} and the H\"{o}lder continuity of $w(\cdot, t)$ uniformly for all $t>0$, it follows that 
\[
\begin{aligned}
\ol{m}&=\ul{W}(b)-W(b), \\
\ul{m}&=\ul{W}(-b)-W(-b),
\end{aligned}
\]
which yields 
\beq\label{convergence bdry2}
\ul{W}(b)-W(b)>\ul{W}(-b)-W(-b)
\eeq
Note that the local Lipschitz continuity of $w(\cdot, t)$ for all $t>0$ implies the local Lipschitz continuity of $\ul{W}$. Therefore, in light of Theorem \ref{thm:profile-shift} together with Remark \ref{rmk super conti},  $\ul{W}-W$ is constant in $[-b, b]$, which is clearly a contradiction to \eqref{convergence bdry2}.

It remains to consider the general case when $u_0\in C([-b, b])$ is convex.  In this case, we apply the approximation of $u_0$ given in the proof of Theorem \ref{thm existence perron}. 
Let $\psi_{\varepsilon} \in C([-b,b]) \cap C^\infty((-b,b))$ be the approximation of $u_0$ defined as in Lemma \ref{lem regularization}.
Applying Theorem \ref{thm:conver-sol}, we see that the unique solution $u_\vep\in C(\ol{Q})$ corresponding to the initial value $\psi_\vep$ satisfies 
\[
u_\vep(x, t)-ct-W(x)\to m_\vep\quad \text{as $t\to \infty$}
\]
uniformly for all $x\in [-b, b]$, where $m_\vep\in \R$ is a constant depending on $\psi_\vep$. 
For any subsequence along $\vep_j$, we write $u_j=u_{\vep_j}$,  $\psi_j=\psi_{\vep_j}$ and $m_j=m_{\vep_j}$ for $j\geq 1$. We can utilize \eqref{initial dependence} to get 
\[
\sup_{\ol{Q}}|u-u_j|\leq \max_{[-b, b]}|u_0-\psi_j| \to 0\quad \text{as $j\to \infty$},
\]
which implies that 
\[
\sup_{\ol{Q}}|u_j-u_k|\to 0\quad \text{as $j, k\to \infty$.} 
\]
This yields $|m_j-m_k|\to 0$ as $j, k\to \infty$. In other words, $m_j$ is a Cauchy sequence.
%. Since $m_j$ is uniformly bounded due to \eqref{initial difference} and \eqref{asymptotic bound}, 
Thus we get $m\in \R$ such that $m_j\to m$ as $j\to \infty$. Noticing that
\[
|u(x, t)-ct-W(x)-m|\leq \sup_{\ol{Q}}|u-u_j|+|m_j-m|+|u_j(x, t)-ct-W(x)-m_j|
\]
holds for all $(x, t)\in \ol{Q}$, we end up with \eqref{convergence intro} for general initial data $u_0$ by letting $t\to \infty$ and then $j\to \infty$. 
\end{proof}

\appendix 
\section{Several Properties of Convex Functions}

The results in this section are elementary and well known but we present them here to improve the readability. 
In the proofs, we use the one-sided derivatives defined by 
\begin{equation}\label{def-one-side}
\begin{aligned}
&\phi_x^+(x) := \lim_{y \to x+} \dfrac{\phi(y) - \phi(x)}{y-x}, \\
&\phi_x^-(x) := \lim_{y \to x-} \dfrac{\phi(y) - \phi(x)}{y-x} 
\end{aligned}
\end{equation}
for $\phi \in C([-b,b])$ and $x \in (-b,b)$ if they exist. 

\begin{lem}\label{lem:mono-differential}
Let $b >0$. 
Assume that $\phi \in C([-b,b])$ is convex (resp.\ concave). 
Then, one-sided derivatives defined by \eqref{def-one-side} are well-defined in $(-b,b)$, finite and 
\begin{equation}\label{mono-differential} 
\phi^-_x(x) \le \phi^+_x(x) \le \phi^-_x (y) \le \phi^+_x(y) \quad (\text{resp.\ } \phi^-_x(x) \ge \phi^+_x(x) \ge \phi^-_x (y) \ge \phi^+_x(y)) 
\end{equation}
for $-b < x < y < b$. 
\end{lem}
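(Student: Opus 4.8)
The plan is to reduce everything to the monotonicity of difference quotients. First I would record the elementary slope inequality: for $\phi$ convex and $-b < x_1 < x_2 < x_3 < b$, write $x_2 = (1-\theta)x_1 + \theta x_3$ with $\theta = (x_2-x_1)/(x_3-x_1) \in (0,1)$; then the convexity inequality $\phi(x_2) \le (1-\theta)\phi(x_1) + \theta\phi(x_3)$ rearranges, after clearing denominators, into
\[
\frac{\phi(x_2)-\phi(x_1)}{x_2-x_1} \;\le\; \frac{\phi(x_3)-\phi(x_1)}{x_3-x_1} \;\le\; \frac{\phi(x_3)-\phi(x_2)}{x_3-x_2}.
\]
Consequently the divided difference $q(s,t) := (\phi(t)-\phi(s))/(t-s)$, defined for $s \neq t$ in $(-b,b)$, is nondecreasing in each of its two arguments when the other is held fixed, and it is symmetric in $s,t$.

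Next I would deduce existence and finiteness of the one-sided derivatives. Fix $x \in (-b,b)$ and pick any $x_- \in (-b,x)$. For $y \in (x,b)$ the map $y \mapsto q(x,y)$ is nondecreasing in $y$, and by the slope inequality applied to $x_- < x < y$ it is bounded below by $q(x_-,x)$; hence $\phi_x^+(x) = \lim_{y\to x+} q(x,y) = \inf_{y > x} q(x,y)$ exists and is finite. Symmetrically, choosing $x_+ \in (x,b)$ and using $y < x < x_+$, the quantity $q(y,x)$ is nondecreasing in $y$ and bounded above, so $\phi_x^-(x) = \lim_{y\to x-} q(y,x) = \sup_{y < x} q(y,x)$ exists and is finite. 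The monotonicity of $q$ also gives $q(y,x) \le q(x,y')$ whenever $y < x < y'$; taking the supremum over $y$ and the infimum over $y'$ yields $\phi_x^-(x) \le \phi_x^+(x)$.

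Then I would prove the cross inequality $\phi_x^+(x) \le \phi_x^-(y)$ for $-b < x < y < b$. Choose any $z$ with $x < z < y$. By the infimum characterization at $x$ we have $\phi_x^+(x) \le q(x,z)$; by the slope inequality applied to $x < z < y$ we have $q(x,z) \le q(z,y)$; and by the supremum characterization at $y$ we have $q(z,y) \le \phi_x^-(y)$. Chaining these with the two ``same-point'' inequalities from the previous step gives the full chain $\phi_x^-(x) \le \phi_x^+(x) \le \phi_x^-(y) \le \phi_x^+(y)$. Finally, the concave case is obtained verbatim by applying the convex case to $-\phi$, which reverses all the inequalities and interchanges the roles of $\phi_x^+$ and $\phi_x^-$.

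As for difficulty, there is essentially no serious obstacle: the result is classical and every step is elementary. The only point that deserves a moment's care is organizing the three descriptions of the one-sided derivative (as a monotone one-sided limit, as an infimum over points to the right, and as a supremum over points to the left) so that the slope inequality can be inserted between the appropriate divided differences; everything else is bookkeeping.
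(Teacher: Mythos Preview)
Your proof is correct and follows essentially the same approach as the paper: both derive the three-point slope inequality from convexity and use the resulting monotonicity of the divided difference $q(s,t)$ in each argument to obtain existence, finiteness, and the ordering of the one-sided derivatives. The only cosmetic differences are that you phrase the one-sided derivatives as $\inf$/$\sup$ of difference quotients and insert an intermediate point $z$ for the cross inequality, whereas the paper reads everything directly off a four-point version of the slope inequality.
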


\begin{proof}
Assume that $\phi$ is convex. 
The proof on a concave function $\phi$ is similar. 
Let $s, t, u, v \in [-b,b]$ be arbitrary such that $-b \le s < u < v < t \le b$. 
Then, we obtain 
\begin{equation}\label{mono-1}
 \dfrac{\phi(u) - \phi(s)}{u-s} \le \dfrac{\phi(t)-\phi(u)}{t-u} \le \dfrac{\phi(t) - \phi(v)}{t-v}. 
\end{equation}
Indeed, it follows from the convexity of $\phi$ that 
\[ \phi(u) \le \dfrac{u-s}{t-s} \phi(t) + \dfrac{t-u}{t-s} \phi(s) \]
which is equivalent to the first inequality in \eqref{mono-1}. 
The second inequality can be proved by a similar argument. 
Taking $t=x \in (-b,b)$ in the second inequality, we can see that $(\phi(y) - \phi(x))/(y-x)$ is non-decreasing as $y \to x-$ and hence the one-sided derivative $\phi^-_x(x)$ exists. 
Similarly, $\phi^+_x(x)$ exists for $x \in (-b,b)$. 
The first inequality in \eqref{mono-1} with $u=x \in (-b,b)$ and the inequality \eqref{mono-1} with $-b < s=x < y=t < b$ yield $\phi^-_x(x) \le \phi^+_x(x)$ and $\phi^+_x(x) \le \phi^-_x(y)$. 
Thus, we have \eqref{mono-differential}. 
Similarly, we can obtain 
\begin{equation}\label{bdd-derivatives} 
\dfrac{\phi(x) - \phi(-b)}{x+b} \le \phi^-_x(x) \le \phi^+_x(x) \le \dfrac{\phi(b) - \phi(x)}{b-x} 
\end{equation}
for $x \in (-b,b)$, thus the one-sided derivatives are finite in $(-b,b)$. 
\end{proof}

\begin{cor}\label{cor:regular-convex}
Let $b>0$ and $\phi \in C([-b,b])$.
Assume that for any $b_0 \in (0,b)$ there exists $A_1 \ge A_2$ such that $\phi(x) + A_1 x^2/2$ is convex and $\phi(x) + A_2 x^2/2$ is concave in $[-b_0,b_0]$. 
Then, $\phi \in C^{1,1}((-b,b))$ and 
\begin{equation}\label{lip-deri} 
A_2 (y-x) \le \phi_x(x) - \phi_x(y) \le A_1(y-x) \quad \text{for} \quad -b_0 < x \le y < b_0. 
\end{equation}
\end{cor}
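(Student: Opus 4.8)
The plan is to reduce everything to Lemma \ref{lem:mono-differential} applied to the two auxiliary functions $\psi_1(x):=\phi(x)+A_1x^2/2$ and $\psi_2(x):=\phi(x)+A_2x^2/2$ on each interval $[-b_0,b_0]$. Fix $b_0\in(0,b)$ and the corresponding $A_1\ge A_2$. By hypothesis $\psi_1$ is convex and $\psi_2$ is concave on $[-b_0,b_0]$, so Lemma \ref{lem:mono-differential} guarantees that the one-sided derivatives $(\psi_1)_x^\pm$ and $(\psi_2)_x^\pm$ exist, are finite in $(-b_0,b_0)$, and satisfy \eqref{mono-differential}: $(\psi_1)_x^\pm$ is non-decreasing and $(\psi_2)_x^\pm$ is non-increasing. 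Since the elementary identity $\psi_i(y)-\psi_i(x))/(y-x)=(\phi(y)-\phi(x))/(y-x)+A_i(x+y)/2$ shows that $(\psi_i)_x^\pm(x)=\phi_x^\pm(x)+A_ix$, it follows that the one-sided derivatives $\phi_x^\pm$ of $\phi$ itself exist and are finite on $(-b_0,b_0)$; as $b_0\in(0,b)$ is arbitrary, they exist on all of $(-b,b)$.

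Next I would show that $\phi$ is differentiable on $(-b,b)$. Applying \eqref{mono-differential} at a single point to the convex function $\psi_1$ gives $\phi_x^-(x)+A_1x\le\phi_x^+(x)+A_1x$, i.e.\ $\phi_x^-(x)\le\phi_x^+(x)$; applying it to the concave function $\psi_2$ gives the reverse inequality $\phi_x^-(x)\ge\phi_x^+(x)$. Hence $\phi_x^-(x)=\phi_x^+(x)$ for every $x\in(-b,b)$, so $\phi$ possesses a two-sided derivative $\phi_x(x)$ there, and $\psi_1,\psi_2$ are differentiable as well.

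Then, for $-b_0<x\le y<b_0$, the monotonicity in \eqref{mono-differential} applied to $\psi_1$ yields $\phi_x(x)+A_1x=(\psi_1)_x(x)\le(\psi_1)_x(y)=\phi_x(y)+A_1y$, that is $\phi_x(x)-\phi_x(y)\le A_1(y-x)$; the analogous monotonicity for the concave $\psi_2$ gives $\phi_x(x)-\phi_x(y)\ge A_2(y-x)$. This is exactly \eqref{lip-deri}, and it shows $\phi_x$ is Lipschitz on each $[-b_0,b_0]$ with constant $\max\{|A_1|,|A_2|\}$; in particular $\phi_x$ is continuous, and letting $b_0\uparrow b$ upgrades this to $\phi\in C^{1,1}((-b,b))$ in the (locally Lipschitz derivative) sense used throughout the paper.

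I do not anticipate a real obstacle here: the proof is a bookkeeping exercise with one-sided derivatives. The only points needing care are the identity $(\phi+A_ix^2/2)_x^\pm=\phi_x^\pm+A_ix$ and consistently tracking the reversed inequality direction produced by the concavity of $\psi_2$ against the convexity of $\psi_1$.
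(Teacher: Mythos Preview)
Your proof is correct and follows essentially the same approach as the paper: apply Lemma \ref{lem:mono-differential} to the convex function $\phi+A_1x^2/2$ and the concave function $\phi+A_2x^2/2$, combine the resulting one-sided derivative inequalities to get differentiability of $\phi$, and read off \eqref{lip-deri} from the monotonicity of the derivatives. The paper's proof is simply a terser version of what you wrote.
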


\begin{proof}
Fix an arbitrary $b_0 \in (0,b)$. 
Applying Lemma \ref{lem:mono-differential} to $\phi(x) + A_1 x^2/2$ and $\phi(x) + A_2 x^2/2$, we obtain $\phi^-_x(x) \le \phi^+_x(x)$ and $\phi^-_x(x) \ge \phi^+_x(x)$ at any point $x \in (-b_0,b_0)$, respectively. 
Thus, $\phi \in C^1((-b_0,b_0))$. 
Lemma \ref{lem:mono-differential} also yields \eqref{lip-deri}. 
Since $b_0$ is arbitrary, $\phi \in C^{1,1}((-b,b))$. 
\end{proof}

\begin{lem}\label{lem:mono-integral}
Let $b>0$. 
Assume that $G \in C^{0,1}(\mathbb{R})$ is monotone increase.
Let $\phi \in C([-b,b])$. 
Assume that for $b_0 \in (0,b)$ there exists $A \in \mathbb{R}$ such that $\phi(x) + Ax^2/2$ is convex (resp.\ concave) in $[-b_0,b_0]$.   
Then, $\phi_x$ and $(G(\phi_x(x)))_x$ exists almost everywhere in $(-b,b)$ and 
\begin{equation}\label{mono-integral}
\begin{aligned}
&G(\phi_x(x)) - G(\phi_x(y)) \ge \int_y^x (G\circ\phi_x)_x(z) \; dz \\ 
&\left(\text{resp.\ } G(\phi_x(x)) - G(\phi_x(y)) \le \int_y^x (G\circ\phi_x)_x(z)\right)
\end{aligned}
\end{equation}
for $-b < y < x < b$ if $\phi_x$ exists at $x$ and $y$. 
\end{lem}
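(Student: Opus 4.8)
The plan is to reduce \eqref{mono-integral} to the classical fact that a non-decreasing function $\Psi$ on an interval is differentiable a.e., has integrable derivative, and satisfies $\Psi(x)-\Psi(y)\ge\int_y^x\Psi'(z)\,dz$ for $y\le x$. Fix $b_0\in(0,b)$ and treat the convex case first (the concave case will follow at the end by applying the convex case to $\chi:=-\phi$, with constant $-A$, and to the function $\tilde G(s):=-G(-s)$, which is again non-decreasing and Lipschitz with the same constant). Set $\tilde\phi:=\phi+Ax^2/2$, a convex function on $[-b_0,b_0]$. By Lemma \ref{lem:mono-differential} applied to $\tilde\phi$, the one-sided derivatives $\tilde\phi_x^{\pm}$ are finite at every point of $(-b_0,b_0)$ and $\tilde\phi_x^+$ is non-decreasing; hence $\phi_x^+(x)=\tilde\phi_x^+(x)-Ax$ is well-defined on all of $(-b_0,b_0)$, the derivative $\phi_x$ exists off an at most countable set (in particular a.e.), and $\phi_x=\phi_x^+$ wherever $\phi_x$ exists. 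Let $L$ be the Lipschitz constant of $G$.

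The key step is to show that $\Psi(x):=G(\phi_x^+(x))+L|A|\,x$ is non-decreasing on $(-b_0,b_0)$. Take $-b_0<y<x<b_0$. If $\phi_x^+(x)\ge\phi_x^+(y)$, then monotonicity of $G$ gives $G(\phi_x^+(x))-G(\phi_x^+(y))\ge 0\ge -L|A|(x-y)$, so $\Psi(x)\ge\Psi(y)$. Otherwise $\phi_x^+(x)<\phi_x^+(y)$; but monotonicity of $\tilde\phi_x^+$ gives $\phi_x^+(x)-\phi_x^+(y)\ge -A(x-y)$, so necessarily $A>0$ and $0<\phi_x^+(y)-\phi_x^+(x)\le A(x-y)\le|A|(x-y)$, whence the Lipschitz bound yields $|G(\phi_x^+(x))-G(\phi_x^+(y))|\le L|A|(x-y)$ and again $\Psi(x)\ge\Psi(y)$. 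This is the real content of the lemma: post-composition with a non-decreasing Lipschitz $G$ preserves the one-sided (semi)monotone structure of $\phi_x$, so $G\circ\phi_x$ coincides with a monotone function up to the linear term $-L|A|\,x$. I expect this monotonicity verification to be the only genuine obstacle.

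To conclude, apply the classical theorem to $\Psi$: it is differentiable a.e., $\Psi'$ is integrable on compact subintervals of $(-b_0,b_0)$, and $\Psi(x)-\Psi(y)\ge\int_y^x\Psi'(z)\,dz$ for $-b_0<y<x<b_0$. Since $\phi_x^+=\phi_x$ a.e., we have $G\circ\phi_x^+=G\circ\phi_x$ a.e., so $(G\circ\phi_x)_x$ exists a.e., equals $\Psi'-L|A|$, and is locally integrable; this gives the a.e.\ differentiability claims in the statement. If moreover $\phi_x$ exists at $x$ and $y$, then $\phi_x^+(x)=\phi_x(x)$ and $\phi_x^+(y)=\phi_x(y)$, and subtracting the terms $L|A|x$, $L|A|y$ from both sides of the displayed inequality leaves $G(\phi_x(x))-G(\phi_x(y))\ge\int_y^x(G\circ\phi_x)_x(z)\,dz$. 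Since any $-b<y<x<b$ lie in some $[-b_0,b_0]$, this proves \eqref{mono-integral} on $(-b,b)$ in the convex case; the concave case then follows from the reduction noted above, with the inequality reversed. The remaining work is purely routine bookkeeping plus the standard fundamental-theorem-of-calculus inequality for monotone functions.
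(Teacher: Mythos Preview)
Your proof is correct and takes a genuinely different, more elementary route than the paper's. The paper proceeds via a Lebesgue--Jordan decomposition: it writes the monotone function $\phi_x+Ax$ as an absolutely continuous part plus a non-decreasing singular part, then argues that $G\circ\phi_x$ inherits an analogous decomposition whose singular part is again non-decreasing (using monotonicity of $G$), and finally applies the fundamental theorem of calculus to the absolutely continuous part. Your argument bypasses this decomposition entirely: the observation that $\Psi(x)=G(\phi_x^+(x))+L|A|\,x$ is itself non-decreasing lets you invoke the classical inequality $\Psi(x)-\Psi(y)\ge\int_y^x\Psi'$ for monotone functions in one step. This is shorter and avoids any appeal to BV structure theory; the only cost is that the monotonicity of $\Psi$ requires a small case split, whereas the paper's approach makes the role of the jumps of $\phi_x$ more explicit. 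One cosmetic point: when you write ``$(G\circ\phi_x)_x$ exists a.e., equals $\Psi'-L|A|$'', you are implicitly identifying $G\circ\phi_x$ with its everywhere-defined extension $G\circ\phi_x^+$; this is the natural reading (and the paper makes the same identification), but it is worth saying so explicitly, since two functions that agree a.e.\ need not have equal derivatives a.e.
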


\begin{proof}
Fix an arbitrary $b_0 \in (0,b)$. 
Assume that $\phi(x)+Ax^2$ is convex for some $A \in \mathbb{R}$. 
The proof is similar in the case that $\phi(x) + Ax^2$ is concave. 
Let $b_1 \in (0,b_0)$. 
It is follows from \eqref{bdd-derivatives} that $\phi^-_x(x) + Ax$ and $\phi^+_x(x) + Ax$ is bounded in $[-b_1, b_1]$. 
Applying the monotonicity \eqref{mono-differential}, we can see that the derivative $\phi_x(x) + Ax$ is well-defined almost everywhere and non-decreasing in $x \in [-b_1, b_1]$. 
Thus, $\phi_x(x) + Ax$ is of bounded variation and hence there exist an absolutely continuous function $\phi_{\rm reg}$ and a step function $\phi_{\rm sing}$ with countable jumping points such that 
\[ \phi_x(x) = \phi_{\rm reg}(x) + \phi_{\rm sing}(x) \]
if $\phi_x$ exists at $x \in [-b_1, b_1]$. 
Note that $\phi_{\rm sing}$ is non-decreasing since 
\[ \begin{aligned}
\lim_{y_n \to x+} \phi_{\rm sing}(y_n) - \lim_{z_n \to x-} \phi_{\rm sing}(z_n) =&\; \lim_{y_n \to x+} (\phi_x(y_n) - \phi_{\rm reg}(y_n)) - \lim_{z_n \to x-} (\phi_x(z_n) - \phi_{\rm reg}(z_n)) \\
=&\;  \lim_{y_n \to x+} (\phi_x(y_n) - Ay_n) - \lim_{z_n \to x-} (\phi_x(z_n) - Az_n) \ge 0 
\end{aligned}\]
for any $x \in (-b_1,b_1)$ and sequences $\{y_n\}, \{z_n\}$ in the set of differentiable points of $\phi$. 

Since $G$ is Lipschitz and $\phi_x$ is of bounded variation, $G(\phi_x)$ is also of bounded variation and hence there exist an absolutely continuous function $G_{\rm reg}$ and a step function $G_{\rm sing}$ such that 
\[ G(\phi_x(x)) = G_{\rm reg}(x) + G_{\rm sing}(x) \]
if $\phi_x$ exists at $x \in [-b_1, b_1]$. 
It follows from the monotonicity of $G$ and $\phi_{\rm sing}$ that 
\[ \begin{aligned}
& \lim_{y_n \to x+} G_{\rm sing}(y_n) - \lim_{z_n \to x-} G_{\rm sing}(z_n) = \lim_{y_n \to x+} G(\phi_x(y_n)) - \lim_{z_n \to x-} G(\phi_x(z_n)) \\
=&\; \lim_{y_n \to x+} G(\phi_{\rm reg}(x) + \phi_{\rm sing}(y_n)) - \lim_{z_n \to x-} G(\phi_{\rm reg}(x) + \phi_{\rm sing}(z_n)) \ge 0 
\end{aligned} \]
thus $G_{\rm sing}$ is non-decreasing. 
Since $G(\phi_x)$ and $G_{\rm reg}$ has same derivative almost everywhere in $(-b_1, b_1)$, the fundamental theorem of calculus for $G_{\rm reg}$ and the monotonicity of $G_{\rm sing}$ implies 
\[ G(\phi_x(x)) - G(\phi_x(y)) \ge G_{\rm reg}(x) - G_{\rm reg}(y) = \int_y^x (G_{\rm reg})_x(z) \; dz = \int_y^x (G\circ\phi_x)_x(z) \; dz. \]
for $-b_1 < y < x < b_1$ if $\phi_x$ exists at $x$ and $y$. 
Since $b_1$ and $b_0$ are arbitrary so that $0 < b_1 < b_0 < b$, we have \eqref{mono-integral}. 
\end{proof}

%%%%%%%%%%%%%%%%%%%%%

%\bibliographystyle{abbrv}%
%\bibliography{bib_liu}%

\end{document}